\newif\ifpics
   \definecolor{cites}{rgb}{0.50 , 0.00 , 0.00}  
   \definecolor{urls} {rgb}{0.00 , 0.00 , 0.50}  
   \definecolor{links}{rgb}{0.00 , 0.00 , 0.50}   
\newcommand\C{{\mathbb C}}
\newcommand\D{{\mathbb D}}
\newcommand\E{{\mathcal E}}
\newcommand\cN{{\mathcal N}}
\newcommand\cNfinp{{\mathcal N}_{{\rm fin},p}}
\newcommand\cNfinq{{\mathcal N}_{{\rm fin},q}}
\newcommand\Mfinp{{\mathcal M}_{{\rm fin},p}}
\newcommand\Mfinq{{\mathcal M}_{{\rm fin},q}}
\newcommand\Sfin{{\Sigma}_{{\rm fin},\eps}}
\newcommand\I{{\mathbb I}}
\newcommand\M{{\mathcal M}}
\newcommand\N{{\mathbb N}}
\renewcommand\S{{\Sigma}}
\newcommand\T{{\mathbb T}}
\newcommand\R{{\mathbb R}}
\newcommand\Z{{\mathbb Z}}
\newcommand\sign{{\rm sign}}
\newcommand\eps{{\varepsilon}}
\newcommand\spec{{\rm spec}\,}  
\newcommand\specn{{\rm spec}}   
\newcommand\speps{{\rm spec}_\eps}
\newcommand\spess{{\rm spec}_{\rm ess}\,}
\newcommand\opsp{{\sigma^{\sf op}}}
\newcommand\dist{{\rm dist}}
\newcommand\diag{{\rm diag}}
\newcommand\conv{{\rm conv\,}}
\newcommand\convn{{\rm conv}}
\newcommand\ri{{\rm i}}
\newcommand\ind{{\rm ind\,}}
\newcommand\wind{{\rm wind}}
\newcommand\Ein{E_{\rm in}}
\newcommand\Eout{E_{\rm out}}
\newcommand\cond{{\rm cond}}
\newcommand\ovD{{\overline{\mathbb D}}}
\newcommand\PE{{\rm \Psi E}} 
\newcommand\Mfin{M_{\rm fin}}
\newcommand\BDO{{\rm BDO}}
\newcommand\Eins{E_{\cap}}
\newcommand\Eall{E_{\cup}}
\newcommand\toH{
   \unitlength0.1ex
   \begin{picture}(30,15)
   \put(13,16){\makebox(0,0)[]{\tiny\rm H}}
   \put(15,5){\makebox(0,0)[]{$\to$}}
   \end{picture}
}
\newtheorem{theorem}{Theorem}[section]
\newtheorem{lemma}[theorem]{Lemma}
\newtheorem{corollary}[theorem]{Corollary}
\newtheorem{proposition}[theorem]{Proposition}
\newenvironment{remark}
 {\par\noindent\refstepcounter{theorem}{\bf Remark \thetheorem}}
 {\raisebox{1mm}{\framebox{}}\pagebreak[2]}
\newenvironment{example}
 {\par\noindent\refstepcounter{theorem}{\bf Example \thetheorem}}
 {\raisebox{1mm}{\framebox{}}\pagebreak[2]}
\newcommand\Proofend{\rule{2mm}{2mm}}
\newenvironment{proof}
 {\par\noindent{\bf Proof.}}
 {\Proofend\pagebreak[2]}
\newenvironment{proofof}[1]
 {\par\noindent{\bf Proof of #1.}}
 {\Proofend\pagebreak[2]}
\numberwithin{figure}{section}  
\begin{document}
\title{\bf Coburn's Lemma and the Finite Section Method for Random Jacobi Operators}
\author{{\sc Simon N. Chandler-Wilde}\footnote{Email: {\tt S.N.Chandler-Wilde@reading.ac.uk}}\quad  and\quad {\sc Marko Lindner}\footnote{Email: {\tt lindner@tuhh.de}}}
\date{\today}
\maketitle
\begin{quote}
\renewcommand{\baselinestretch}{1.0}
\footnotesize {\sc Abstract.}
We study the spectra and pseudospectra of semi-infinite and bi-infinite tridiagonal random matrices and their finite principal submatrices, in the case where each of the three diagonals varies over a separate compact set, say $U,V,W\subset\C$. Such matrices are sometimes termed stochastic Toeplitz matrices $A_+$ in the semi-infinite case and stochastic Laurent matrices $A$ in the bi-infinite case. Their spectra, $\S=\spec A$ and $\S_+=\spec A_+$, are independent of $A$ and $A_+$ as long as $A$ and $A_+$ are pseudoergodic (in the sense of E.B.~Davies, {\em Commun.~Math.~Phys.} {\bf 216} (2001), 687--704), which holds almost surely in the random case. This was shown in Davies (2001) for $A$;  that the same holds for $A_+$ is one main result of this paper. Although the computation of $\S$ and $\S_+$ in terms of $U$, $V$ and $W$ is intrinsically difficult, we give upper and lower spectral bounds, and we explicitly  compute a set $G$ that fills the gap between $\S$ and $\S_+$ in the sense that $\S\cup G=\S_+$.
We also show that the invertibility of one (and hence all) operators $A_+$ implies the invertibility -- and uniform boundedness of the inverses -- of all finite tridiagonal square matrices with diagonals varying over $U$, $V$ and $W$. This implies that the so-called finite section method for the approximate solution of a system $A_+x=b$ is applicable as soon as $A_+$ is invertible, and that the finite section method for estimating the spectrum of $A_+$ does not suffer from spectral pollution.
Both results illustrate that tridiagonal stochastic Toeplitz operators share important properties of (classical) Toeplitz operators. Indeed, one of our main tools is a new stochastic version of the Coburn lemma for classical Toeplitz operators, saying that a stochastic tridiagonal Toeplitz operator, if Fredholm, is always injective or surjective.
In the final part of the paper we bound and compare the norms, and the norms of inverses, of bi-infinite, semi-infinite and finite tridiagonal matrices over $U$, $V$ and $W$. This, in particular, allows the study of the resolvent norms, and hence the pseudospectra, of these operators and matrices.
\end{quote}

\noindent
{\it Mathematics subject classification (2000):} Primary 65J10; Secondary 47A10, 47B36, 47B80.\\
{\it Keywords:} finite section method, random operator, Coburn lemma, Jacobi operator, spectral pollution, pseudoergodic

\section{Introduction and Main Results} \label{sec:intro}
In this paper we study so-called {\sl Jacobi operators} over three sets $U$, $V$ and $W$, meaning bi- and semi-infinite matrices of the form
\begin{equation} \label{eq:A}
A = \left(\begin{array}{ccccccc} \ddots&\ddots\\
\ddots&v_{-2}&w_{-2}\\
&u_{-1}&v_{-1}&w_{-1}\\
\cline{4-4}
&&u_{0}&\multicolumn{1}{|c|}{v_0}&w_0\\
\cline{4-4}
&&&u_{1}&v_1&w_1\\
&&&&u_2&v_2&\smash{\ddots}\\
&&&&&\ddots&\ddots
\end{array}\right)\quad \textrm{and}\quad
A_+ = \left(\begin{array}{ccccc}
v_{1}&w_{1}\\
u_{2}&v_{2}&w_{2}\\
&u_{3}&v_3&w_3\\
&&u_{4}&v_4&\smash{\ddots}\\
&&&\ddots&\ddots
\end{array}\right)
\end{equation}
with entries $u_i\in U$, $v_i\in V$ and $w_i\in W$ for all $i$ under consideration. The sets $U$, $V$ and $W$ are nonempty and compact subsets of the complex plane $\C$, and the box marks the matrix entry of $A$ at $(0,0)$.  We will be especially interested in the case where the matrix entries are random (say i.i.d.) samples from $U$, $V$ and $W$. Trefethen et al.~\cite{TrefContEmb} call the operator $A$ a {\sl stochastic Laurent matrix} in this case and  $A_+$ a {\sl stochastic Toeplitz matrix}. We will adopt this terminology which seems appropriate given that one of our aims is to highlight parallels between the analysis of standard and stochastic Laurent and Toeplitz matrices.

It is known that the spectrum of $A$ depends only on the sets $U$, $V$, and $W$, as long as $A$ is pseudoergodic in the sense of Davies \cite{Davies2001:PseudoErg}, which holds almost surely if $A$ is stochastic (see the discussion below). Via a version, which applies to stochastic Toeplitz matrices, of the famous Coburn lemma \cite{Coburn} for (standard) Toeplitz matrices, a main result of this paper is to show that, with the same assumption of pseudoergodicity implied by stochasticity, also the spectrum of $A_+$ depends only on $U$, $V$, and $W$. Moreoever, we tease out very explicitly what the difference is between the spectrum of a stochastic Laurent matrix and the spectrum of the corresponding stochastic Toeplitz matrix. (The difference will be that certain `holes' in the spectrum of the stochastic Laurent case may be `filled in' in the stochastic Toeplitz case, rather similar to the standard Laurent and Toeplitz cases.)

A second main result is to show that infinite linear systems, in which the matrix, taking one of the forms \eqref{eq:A}, is a stochastic Laurent or Toeplitz matrix, can be solved effectively by the standard finite section method, provided only that the respective infinite matrices are invertible. In particular, our results show that, if the stochastic Toeplitz matrix is invertible, then every finite $n\times n$ matrix formed by taking the first $n$ rows and columns of $A_+$ is invertible, and moreover the inverses are uniformly bounded. Again, this result, which can be interpreted as showing that the finite section method for stochastic Toeplitz matrices does not suffer from spectral pollution (cf.~\cite{MarlettaNaboko2014}), is reminiscent of the standard Toeplitz case.

{\bf Related work.} The study of random Jacobi operators and their spectra has one of its main roots in the famous Anderson model \cite{Anderson58,Anderson61} from the late 1950's. In the 1990's the study of a non-selfadjoint (NSA) Anderson model, the Hatano-Nelson model \cite{HatanoNelson1997,NelsonShnerb1998}, led to a series of papers on NSA random operators and their spectra, see e.g. \cite{GoldKoru,Davies2001:PseudoErg,Davies2001:SpecNSA,Martinez2007}.
Other examples of NSA models are discussed in \cite{CicutaContediniMolinari2000,CicutaContediniMolinari2002,TrefContEmb,LiBiDiag}: one example that has attracted significant recent attention (and which, arguably, has a particularly intriguing spectrum) is the randomly hopping particle model due to Feinberg and Zee \cite{FeinZee97,FeinZee99,CWChonchaiyaLindner2011,CWDavies2011,CCL2,Hagger:NumRange,Hagger:dense,Hagger:symmetries,CWHa2015}. A comprehensive discussion of this history, its main contributors, and many more references can be found in Sections 36 and 37 of \cite{TrefEmbBook}.

 A theme of many of these studies \cite{TrefContEmb,TrefEmbBook,CWChonchaiyaLindner2011,CWDavies2011,CCL2,Hagger:dense,CWHa2015}, a theme that is central to this paper, is the relationships between the spectra, norms of inverses, and pseudospectra of random operators, and the corresponding properties of the random matrices that are their finite sections. Strongly related to this (see the discussion in the `Main Results' paragraphs below) is work on the relation between norms of inverses and pseudospectra of finite and infinite {\em classical} Toeplitz
and Laurent matrices \cite{ReichelTref,Boe94,BoeGruSilb97}. In between the classical and stochastic Toeplitz cases, the same issues have also been studied for randomly perturbed Toeplitz and
Laurent operators \cite{BoeEmSok02,BoeEmSok03a,BoeEmSok03b,BoeEmLi}.  This paper, while focussed on the specific features of the random case, draws strongly on results on the finite section method in much more general contexts: see \cite{LiBook} and the `Finite sections' discussion below. With no assumption of randomness, the finite section method for a particular class of NSA perturbations of (selfadjoint) Jacobi matrices is analysed recently in \cite{MarlettaNaboko2014}.

We will build particularly on two recent studies of random Jacobi matrices $A$ and $A_+$ and their finite sections. In \cite{Hagger:NumRange} it is shown that the closure of the numerical range of these operators is the convex hull of the spectrum, this holding whether or not $A$ and $A_+$ are normal operators. Further, an explicit expression for this numerical range is given: see \eqref{eq:hagger} below.  In \cite{LindnerRoch2010} progress is made in bounding the spectrum and understanding the finite section method applied to solving infinite linear systems where the matrix is a tridiagonal stochastic Laurent or Toeplitz matrix.   This last paper is the main starting point for this present work and we recall key notations and concepts that we will build on from \cite{LindnerRoch2010} in the following paragraphs.

{\bf Matrix notations.} We understand $A$ and $A_+$ as linear operators, again denoted by $A$ and $A_+$, acting boundedly, by matrix-vector multiplication, on the standard spaces $\ell^p(\Z)$ and $\ell^p(\N)$ of bi- and singly-infinite complex sequences with $p\in [1,\infty]$. The sets of all operators $A$ and $A_+$ from \eqref{eq:A} with entries $u_i\in U$, $v_i\in V$ and $w_i\in W$ for all indices $i$ that occur will be denoted by $M(U,V,W)$ and $M_+(U,V,W)$, respectively.  For $n\in \N$ the set of $n\times n$ tridiagonal matrices with subdiagonal entries in $U$, main diagonal entries in $V$ and superdiagonal entries in $W$ (and all other entries zero) will be denoted $M_n(U,V,W)$, and we set $\Mfin(U,V,W):= \cup_{n\in \N} M_n(U,V,W)$. For $X=\ell^{p}(\I)$ with $\I=\N$ or $\Z$, we call a bounded linear operator $A:X\to X$ a {\sl band-dominated operator} and write $A\in\BDO(X)$ if $A$ is the limit, in the operator norm on $X$, of a sequence of band operators (i.e. bounded operators on $X$ that are induced by infinite matrices with finitely many nonzero diagonals). For $A=(a_{ij})_{i,j\in \I}\in \BDO(X)$, $A^\top=(a_{ji})_{i,j\in \I}\in \BDO(X)$ denotes the transpose of $A$. The boundedness of the sets $U$, $V$ and $W$ implies that every operator in $M(U,V,W)$ or $M_+(U,V,W)$ is bounded and hence band-dominated (of course even banded). We use $\|\cdot\|$ as the notation for the norm of an element of $X=\ell^p(\I)$, whether $\I=\N$, $\Z$, or $-\N=\{\dots, -2,-1\}$, or $\I$ is a finite set, and use the same notation for the induced operator norm of a bounded linear operator on $X$ (which is the induced norm of a finite square matrix if $\I$ is finite). If we have a particular $p\in[1,\infty]$ in mind, or want to emphasise the dependence on $p$, we will write $\|\cdot \|_p$ instead of $\|\cdot\|$.

{\bf Random alias pseudoergodic operators.} Our particular interest is random operators in $M(U,V,W)$ and $M_+(U,V,W)$. We model randomness by the following deterministic concept: we call $A\in M(U,V,W)$ {\sl pseudoergodic} and write $A\in\PE(U,V,W)$ if every finite Jacobi matrix over $U$, $V$ and $W$ can be found, up to arbitrary precision, as a submatrix of $A$. Precisely, for every $B\in\Mfin(U,V,W)$ and every $\eps>0$ there is a finite square submatrix $C$ of $A$ such that $\|B-C\|<\eps$. Here $C$ is composed of rows $i=k+1,...,k+n$ and columns $j=k+1,...,k+n$ of $A$, where $k$ is some integer and $n$ is the size of $B$. By literally the same definition we define semi-infinite pseudoergodic matrices $A_+$ and denote the set of these matrices by $\PE_+(U,V,W)$.
Pseudoergodicity was introduced by Davies \cite{Davies2001:PseudoErg} to study spectral properties of random operators while eliminating probabilistic arguments. Indeed, if all matrix entries in \eqref{eq:A} are chosen independently (or at least not fully correlated) using probability measures whose supports are $U$, $V$ and $W$, then, with probability one, $A$ and $A_+$ in \eqref{eq:A} are pseudoergodic.

{\bf Fredholm operators, spectra, and pseudospectra.} Recall that a bounded linear operator $B:X\to Y$ between Banach spaces is a {\sl Fredholm operator} if the dimension, $\alpha(B)$, of its null-space is finite and the codimension, $\beta(B)$, of its image
in $Y$ is finite. In this case, the image of $B$ is closed in $Y$ and the integer $\ind B:=\alpha(B)-\beta(B)$ is called the {\sl index} of $B$. Equivalently, $B$ is a Fredholm operator if it has a so-called regularizer $C:Y\to X$ modulo compact operators, meaning that $BC-I_Y$ and $CB-I_X$ are both compact.
For a bounded linear operator $B$ on $\ell^p(\I)$ with $\I\in\{\Z,\N,-\N\}$, we write $\specn^p B$ and $\specn_{\rm ess}^p\,B$ for the sets of all $\lambda\in\C$ for which $B-\lambda I$ is, respectively, not invertible or not a Fredholm operator on $\ell^p(\I)$. Because $A$ and $A_+$ in \eqref{eq:A} are band matrices, their spectrum and essential spectrum do not depend on the underlying $\ell^p$-space \cite{Kurbatov,Li:Wiener,Roch:ellp}, so that we will just write $\spec A$ and $\spess A$ for operators $A\in M(U,V,W)$ -- and similarly for $A_+\in M_+(U,V,W)$. Even more, if $A_+\in M_+(U,V,W)$ or $A\in M(U,V,W)$ is Fredholm as an operator on $\ell^p(\N)$ or $\ell^p(\Z)$, respectively, both its Fredholm property and index are independent of $p$ \cite{Roch:ellp,Li:Wiener}.

Following notably \cite{TrefEmbBook}, we will be interested in not just the spectrum and essential spectrum, but also the {\em $\eps$-pseudospectrum}, the union of the spectrum with those $\lambda\in \C$ where the resolvent is well-defined with norm $>\eps^{-1}$. Precisely, for an $n\times n$ complex matrix $B$, or a bounded linear operator $B$ on $\ell^p(\I)$ with $\I\in\{\Z,\N\}$, we define, for $1\le p\le \infty$ and $\eps>0$,
\begin{equation} \label{eq:PSdef}
\speps^p B\ :=\ \left\{\lambda\in\C\ :\ \|(B-\lambda I)^{-1}\|_p>\eps^{-1}\right\},
\end{equation}
with the convention that $\|A^{-1}\|_p:=\infty$ if $A$ is not invertible (so that $\specn^p B\subset\speps^p B$). While $\specn^p B$ is independent of $p$ (and so abbreviated as $\spec B$), the set $\speps^p B$ depends on $p$ in general. It is a standard result (see \cite{TrefEmbBook} for this and the other standard results we quote) that
\begin{equation} \label{eq:specenc}
\spec B + \eps \D \subset \speps^p B,
\end{equation}
with $\D:=\{z\in\C:|z|<1\}$ the open unit disk.
If $p=2$ and $B$ is a normal matrix or operator then equality holds in \eqref{eq:specenc}. Clearly, for $0<\eps_1<\eps_2$, $\spec B \subset \specn_{\eps_1}^p B \subset \specn_{\eps_2}^p B$, and $\spec B = \bigcap_{\eps>0} \specn_\eps^p B$ (for $1\le p\le \infty$).  Where $\overline{S}$ denotes the closure of $S\subset \C$, a deeper result, see the discussion in \cite{Shargorodsky08} (summarised in \cite{CCL2}), is that
\begin{equation} \label{eq:Spec}
\overline{\speps^p B} = \mathrm{Spec}_\eps^p B := \left\{\lambda\in\C\ :\ \|(B-\lambda I)^{-1}\|_p\ge\eps^{-1}\right\}.
\end{equation}
Interest in pseudospectra has many motivations \cite{TrefEmbBook}. One is that $\speps^p B$ is the union of $\specn(B+T)$ over all perturbations $T$ with  $\|T\|_p<\eps$. Another is that, unlike $\spec B$ in general, the pseudospectrum depends continuously on $B$ with respect to the standard Hausdorff metric (see \eqref{eq:Haus} below).

{\bf Limit operators.} A main tool of our paper, and of \cite{LindnerRoch2010}, is the notion of limit operators. For $A=(a_{ij})_{i,j\in\Z}\in\BDO(X)$ with $X=\ell^p(\Z)$ and $h_1,h_2,...$ in $\Z$ with $|h_n|\to\infty$ we say that $B=(b_{ij})_{i,j\in\Z}$ is a {\sl limit operator} of $A$ if, for all $i,j\in\Z$,
\begin{equation}\label{eq:limop}
a_{i+h_n,j+h_n}\to b_{ij}\quad\textrm{as}\quad n\to\infty.
\end{equation}
The boundedness of the diagonals of $A$ ensures (by Bolzano-Weierstrass) the existence of such sequences $(h_n)$ and the corresponding limit operators $B$. From $A\in \BDO(X)$ it follows that $B\in \BDO(X)$. The closedness of $U$, $V$ and $W$ implies that $B\in M(U,V,W)$ if $A\in M(U,V,W)$. We write $\opsp(A)$ for the set of all limit operators of $A$. Similarly, $B=(b_{ij})_{i,j\in\Z}$ is a limit operator of $A_+=(a_{ij})_{i,j\in\N}\in\BDO(\ell^p(\N))$ if \eqref{eq:limop} holds for a sequence $(h_n)$ in $\N$ with $h_n\to+\infty$. Note that limit operators are always given by a bi-infinite matrix, no matter if the matrix $A$ or $A_{+}$ to start with is bi- or semi-infinite. The following lemma summarises the main results on limit operators:

\begin{lemma}\label{lem:limop}
Let $A\in\BDO(\ell^{p}(\I))$ with $\I\in \{\Z,\N\}$ and let $B$ be a limit operator of $A$. Then:\\[-7mm]
\begin{itemize}\itemsep-1mm
\item[{\bf a)}] \cite{RaRoSi1998} It holds that $\|B\|\le\|A\|$.
\item[{\bf b)}] \cite{RaRoSi1998,RaRoSiBook} If $A$ is Fredholm then $B$ is invertible, and $B^{-1}$ is a limit operator of any regularizer $C$ of $A$.
(Note that $B,B^{-1}\in\BDO(\ell^p(\Z))$ and $C\in\BDO(\ell^{p}(\I))$ hold if $A\in\BDO(\ell^{p}(\I))$.)
\item[{\bf c)}] \cite{RaRoSi1998,LiBook,CWLi2008:Memoir,LiSei:BigQuest} A is Fredholm iff all its limit operators are invertible.
\item[{\bf d)}] \cite{Davies2001:PseudoErg,LiBook} If $A\in M(U,V,W)$ or $A\in M_+(U,V,W)$ then $A$ is pseudoergodic iff $\opsp(A)=M(U,V,W)$.
\item[{\bf e)}] \cite{RaRoRoe} If $A\in\BDO(\ell^p(\N))$ is Fredholm then $B_+:=(B_{ij})_{i,j\in\N}$ is Fredholm and $\ind(A)=\ind(B_+)$.
\end{itemize}
\end{lemma}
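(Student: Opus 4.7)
My plan is to derive (a), (b), (d) from an entrywise shift-limit construction, to note that (c) reduces to (b) in one direction while the converse is the deep theorem of the limit operator method, and to quote (e) directly.

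For (a), write $B$ as the entrywise limit of the conjugates $V_{-h_n} A V_{h_n}$, where $V_h$ denotes the shift by $h$ (extending by zero to $\Z$ in the semi-infinite case so that $V_{h_n}$ is eventually well-defined on any fixed finitely supported vector, since $|h_n|\to\infty$). Each conjugate has norm at most $\|A\|$; testing against a finitely supported $x$ gives $Bx = \lim_n V_{-h_n} A V_{h_n} x$ in $\ell^p(\Z)$ and hence $\|Bx\| \le \|A\|\,\|x\|$, so $\|B\|\le\|A\|$ by density. For (b), the decisive input is that a limit operator of a compact operator vanishes, since any compact operator on $\ell^p$ is a norm limit of finite-rank operators whose entries shift entrywise to zero. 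Given a regularizer $C$ of $A$, so $AC - I$ and $CA - I$ are compact, the boundedness of the diagonals of $C$ lets us pass, by Bolzano--Weierstrass, to a subsequence of $(h_n)$ along which $C$ has a limit operator $D$. Limit operators along a common sequence respect sums and products for band-dominated operands, so taking limits in $AC - I$ and $CA - I$ yields $BD = I = DB$, whence $B$ is invertible with $B^{-1} = D$.

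The forward direction of (c) is immediate from (b); the converse is the main theorem of the limit operator method, which I would simply invoke from the cited references. For (d), every finite block of a limit operator $B$ of $A$ is the entrywise limit of a corresponding shifted finite submatrix of $A$; by closedness of $U$, $V$, $W$ this forces $B \in M(U,V,W)$, and each finite submatrix of $B$ is then arbitrarily well approximated by a finite submatrix of $A$. This already gives $\opsp(A) \subset M(U,V,W)$, and the reverse inclusion is equivalent, via a standard diagonal argument that extends any prescribed $B_0 \in \Mfin(U,V,W)$ to a bi-infinite element of $M(U,V,W)$ and then picks a sequence $(h_n)$ realizing ever-larger central blocks to arbitrary precision, to the statement that every finite Jacobi matrix over $U$, $V$, $W$ appears approximately as a submatrix of $A$; this is the definition of pseudoergodicity. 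Part (e) is quoted directly from Rabinovich--Roch--Roe.

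The only genuinely deep ingredient is the converse direction of (c), which rests on the Fredholm criterion established in the limit operator literature. The remaining parts are bookkeeping around entrywise convergence, the vanishing of limit operators of compact operators on $\ell^p$, and a diagonal extraction; (e) requires the index-preservation theorem from the cited paper.
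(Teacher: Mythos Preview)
The paper itself gives no proof of this lemma; it simply records (a)--(e) as known facts from the limit operator literature, attaching the relevant citations to each item. Your proposal therefore goes further than the paper does, by sketching the actual arguments behind (a), (b), and (d) rather than merely quoting them.

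Your sketches capture the standard proofs correctly. A few small technical points are glossed over: in (a), the ``by density of finitely supported vectors'' step needs modification for $p=\infty$ (one argues via componentwise convergence, or via duality with $p=1$, instead of norm density); in (b), you tacitly use that the regularizer $C$ of a band-dominated Fredholm operator can itself be taken band-dominated (so that its diagonals are bounded and a limit operator of $C$ exists), which is itself a nontrivial structural fact handled in the cited sources, and your claim that ``any compact operator on $\ell^p$ is a norm limit of finite-rank operators'' is delicate at $p=\infty$ --- in the BDO framework for $\ell^\infty$ the relevant ideal is that of $\mathcal{P}$-compact operators rather than all compacts; also, you should note that the subsequence argument actually identifies $B^{-1}$ as the limit of $C$ along the \emph{full} sequence $(h_n)$, since every subsequence has a further subsequence producing the same inverse $B^{-1}$. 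But these are refinements, not gaps: the core ideas --- shift-and-test for (a), vanishing of limit operators of compacts together with multiplicativity of the limit-operator map along a common sequence for (b), and the diagonal extraction for (d) --- are exactly those in the references the paper cites.
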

So we immediately get that $A\in\PE(U,V,W)$ is Fredholm iff all $B\in M(U,V,W)$ are invertible, in which case of course $A\in M(U,V,W)$ is invertible.

{\bf Finite sections.} A further topic of \cite{LindnerRoch2010} and our paper is the so-called {\sl finite section method (FSM)}. This method aims to approximately solve an equation $Ax=b$, i.e.
\begin{equation} \label{eq:Ax=b}
\sum_{j\in\Z} a_{ij}\ x(j)\ =\ b(i),\quad i\in\Z,
\end{equation}
by truncating it to
\begin{equation} \label{eq:Anxn=b}
\sum_{l_n\le j\le r_n} a_{ij}\ x_n(j)\ =\ b(i),\quad l_n\le i\le r_n,
\end{equation}
where the cut-off points $l_1,l_2,...\to -\infty$ and $r_1,r_2,...\to+\infty$ are certain, sometimes well-chosen, integers.
The aim is that, assuming invertibility of $A$ (i.e. unique solvability of \eqref{eq:Ax=b} for all right-hand sides $b$), also \eqref{eq:Anxn=b} shall be uniquely solvable for all sufficiently large $n$ and the solutions $x_n$ shall approximate the solution $x$ of \eqref{eq:Ax=b} (in the sense that, for every right hand side $b$, it holds as $n\to\infty$ that $\|x_n-x\|\to 0$, if $1\leq p<\infty$, that $\|x_n\|=O(1)$ and $x_n(j)\to x(j)$ for every $j\in\Z$, if $p=\infty$).
If that is the case 
then the FSM is said to be {\sl applicable} to $A$ (or we say that the FSM {\em applies to} $A$).

If $i$ and $j$ in \eqref{eq:Ax=b} only run over the positive integers, $\N$, then this system corresponds to a semi-infinite equation $A_+x_+=b_+$. The FSM is then to freeze $l_n$ at $1$ and only let $r_n$ go to $+\infty$. Otherwise the terminology is identical.

Applicability of the FSM is equivalent \cite{ProssdorfSilbermann,RoSi,RaRoSiBook} to invertibility of $A$ plus {\sl stability} of the sequence of finite matrices
\begin{equation} \label{eq:An}
A_{n}\ :=\ (a_{ij})_{i,j=l_{n}}^{r_{n}},\qquad n=1,2,...\ .
\end{equation}
The latter means that, for all sufficiently large $n$, the matrices $A_{n}$ (the so-called {\sl finite sections} of $A$) are invertible and their inverses are uniformly bounded, in short: $\limsup_{n\to\infty} \|A_{n}^{-1}\|<\infty$. This, moreover, is known \cite{LindnerRoch2010,SeidelPhD} to be equivalent to the invertibility of $A$ and of certain semi-infinite matrices that are associated to $A$ and to the cut-off sequences $(l_{n})$ and $(r_{n})$. Those associated semi-infinite matrices are partial limits (in the strong topology) of the upper left and the lower right corner of the finite matrix $A_{n}$ as $n\to\infty$. Precisely, the associated matrices are the entrywise limits
\begin{equation} \label{eq:ass_matrix}
(a_{i+l_n',j+l_n'})_{i,j=0}^\infty\ \to\ B_+\qquad\textrm{and}\qquad (a_{i+r_n',j+r_n'})_{i,j=-\infty}^0\ \to\ C_-\qquad\textrm{as}\ n\to\infty
\end{equation}
of semi-infinite submatrices of $A$, where $(l_n')_{n=1}^\infty$ and $(r_n')_{n=1}^\infty$ are subsequences of $(l_n)_{n=1}^\infty$ and $(r_n)_{n=1}^\infty$, respectively, such that the limits \eqref{eq:ass_matrix} exist. So $B_{+}$ and $C_-$ are one-sided truncations of limit operators of $A$; they tell us what we find in the limit when jumping along the main diagonal of $A$ via the sequences $l_{1},l_{2},...$ and $r_{1},r_{2},...$ -- or subsequences thereof. Hence, by the choice of the cut-off sequences $(l_{n})$ and $(r_{n})$, one can control the selection of associated matrices $B_{+}$ and $C_-$ and consequently control the applicability of the FSM. Let us summarize all that:

\begin{lemma}\label{lem:FSMappl}
For $A=(a_{ij})_{i,j\in\Z}\in\BDO(\ell^p(\Z))$ and two cut-off sequences
$(l_n)_{n=1}^\infty$ and $(r_n)_{n=1}^\infty$ in $\Z$ with $l_n\to-\infty$ and $r_n\to+\infty$, the following are equivalent:\\[-7mm]
\begin{itemize}
\item[i)] the FSM \eqref{eq:Anxn=b} is applicable to $A$,\\[-7mm]
\item[ii)] the sequence $(A_n)_{n=1}^\infty$, with $A_n$ from \eqref{eq:An}, is stable,\\[-7mm]
\item[iii)] $A$ and all limits $B_+$ and $C_-$ from \eqref{eq:ass_matrix} are invertible.\\[-7mm]
\item[iv)] $A$ and all limits $B_+$ and $C_-$ from \eqref{eq:ass_matrix} are invertible, and the inverses $B_+^{-1}$ and $C_-^{-1}$ are uniformly bounded.
\end{itemize}
\end{lemma}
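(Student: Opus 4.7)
The plan is to verify the four equivalences in a short cycle: (i) $\Leftrightarrow$ (ii), then (ii) $\Leftrightarrow$ (iv), with the trivial (iv) $\Rightarrow$ (iii) and a compactness argument for (iii) $\Rightarrow$ (iv) sandwiched in between. The equivalence (i) $\Leftrightarrow$ (ii) is classical and follows from the Polski--Banach--Steinhaus pattern of \cite{ProssdorfSilbermann,RoSi,RaRoSiBook}: if $A$ is invertible and $\limsup\|A_n^{-1}\|<\infty$ then the approximants $x_n:=A_n^{-1}P_n b$ are uniformly bounded, converge on the dense set of finitely supported $b$, and therefore converge to $A^{-1}b$ in the relevant topology; conversely, applicability of the FSM yields unique solvability (hence invertibility of $A$) together with a uniform bound on $\|A_n^{-1}\|$ via a uniform boundedness principle applied to the mappings $b\mapsto x_n$.

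The heart of the argument is (ii) $\Leftrightarrow$ (iv), which I would establish using the sequence algebra framework of \cite{LindnerRoch2010,SeidelPhD}. The device is to embed each finite section $A_n$ into $\BDO(\ell^p(\Z))$ by setting $T_n:=A_n\oplus I$, with the identity acting on the complement of the truncation window $[l_n,r_n]$. Stability of $(A_n)$ is equivalent to stability of $(T_n)$ in this enlarged sequence algebra, and the corresponding limit operator theorem --- a sequence-algebra cousin of Lemma \ref{lem:limop}(c) --- characterises stability of $(T_n)$ as invertibility together with uniform boundedness of the inverses of all limit operators of the sequence. Computing these limit operators amounts to recording what one sees upon zooming in near a prescribed index as $n\to\infty$: far from both cut-offs one recovers $A$ itself; near the left cut-off $l_n$, along a convergent subsequence, one recovers precisely a $B_+\oplus I$ drawn from \eqref{eq:ass_matrix}, and near the right cut-off $r_n$ one recovers $I\oplus C_-$; far outside the support of $T_n$ one obtains the identity, which is trivially invertible. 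Hence (ii) $\Leftrightarrow$ (iv) reduces to this identification.

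Finally, (iv) $\Rightarrow$ (iii) is immediate. For (iii) $\Rightarrow$ (iv), the key observation is that the family of admissible limits $B_+$ and $C_-$ in \eqref{eq:ass_matrix} is itself closed under the Bolzano--Weierstrass extraction used to define it, and the norm of the inverse is lower semi-continuous under entrywise (strong) convergence of a uniformly bounded family of band-dominated operators. So if some sequence of admissible $B_+^{(k)}$ had $\|(B_+^{(k)})^{-1}\|\to\infty$, a diagonal extraction would yield a further admissible $B_+^{\ast}$ with $\|(B_+^{\ast})^{-1}\|=\infty$, contradicting (iii); the symmetric argument handles the $C_-$ case. The main obstacle throughout is the careful setup of the sequence algebra and the identification of its limit operators with the data of \eqref{eq:ass_matrix}, particularly the simultaneous handling of the two independently moving edges $l_n$ and $r_n$; once that machinery is in place, the four equivalences fall out routinely from the abstract stability theorem.
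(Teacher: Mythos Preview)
Your overall architecture matches the paper's: the paper, too, treats (i)$\Leftrightarrow$(invertibility of $A$ $+$ (ii)) as Polski's theorem \cite{RoSi}, defers (ii)$\Leftrightarrow$(iv) to \cite{RaRoSi2001,RaRoSiBook,LiBook}, and defers (iii)$\Leftrightarrow$(iv) to \cite{RaRoSi:OnFS,Li:FSMsubs}. Your sequence-algebra description of (ii)$\Leftrightarrow$(iv) via the inflated operators $T_n=A_n\oplus I$ and the identification of their limit operators with $A$, $B_+\oplus I$, $I\oplus C_-$, and $I$ is exactly the mechanism behind those references, so that part is fine. One small logical point: your direct argument for (ii)$\Rightarrow$(i) assumes $A$ invertible, which you have not yet derived from (ii) alone; the paper closes this by first establishing (ii)$\Leftrightarrow$(iv), noting that (iv) contains invertibility of $A$, and only then concluding (ii)$\Rightarrow$(i). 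You should route your implication the same way.

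The genuine gap is in your (iii)$\Rightarrow$(iv). You claim that ``the norm of the inverse is lower semi-continuous under entrywise (strong) convergence'' and deduce that if $\|(B_+^{(k)})^{-1}\|\to\infty$ and $B_+^{(k)}\to B_+^\ast$ entrywise, then $\|(B_+^\ast)^{-1}\|=\infty$. But lower semi-continuity gives only $\liminf_k\|(B_+^{(k)})^{-1}\|\ge\|(B_+^\ast)^{-1}\|$, which is vacuous here. The conclusion you want would require the \emph{opposite} inequality, equivalently that invertibility is open under this convergence, and that is \emph{false} in general: on $\ell^p(\N)$ take $B_k$ diagonal with all entries $1$ except a single entry $1/k$ in position $k$; then $B_k\to I$ entrywise with uniform norm bound, $I$ is invertible, yet $\|B_k^{-1}\|=k\to\infty$. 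So the bare compactness-plus-semicontinuity scheme cannot work. What makes (iii)$\Rightarrow$(iv) true in the present setting, and what \cite{RaRoSi:OnFS,Li:FSMsubs} actually exploit, is the extra structure that each admissible $B_+$ (resp.\ $C_-$) is the one-sided compression of a \emph{limit operator of the fixed invertible $A$}; this forces any putative approximate kernel vectors of the $B_+^{(k)}$ to be uniformly localised near the cut, after which a genuine extraction argument goes through. Your sketch does not invoke this localisation, and without it the step fails.
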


\begin{proof}
That applicability $i)$ is equivalent to invertibility of $A$ plus stability $ii)$ is a classical result (called ``Polski's theorem'' in \cite{HaRoSi2}) for the case of strong convergence $A_n\to A$, and it is in \cite{RoSi} for the more general case considered here. (Note that the convergence $A_n\to A$ is generally not strong if $p=\infty$.) That $ii)$ is equivalent to $iv)$ was shown in \cite{RaRoSi2001} for $p=2$, in \cite{RaRoSiBook} for $p\in [1,\infty)$ and in \cite{LiBook} for $p\in[1,\infty]$. So in particular, $ii)$ implies invertibility of $A$ and hence also $i)$. The equivalence of $iii)$ and $iv)$ is shown in \cite{RaRoSi:OnFS} for $p\in (1,\infty)$ and in \cite{Li:FSMsubs} for $p\in[1,\infty]$. The case of arbitrary monotonic cut-off sequences $(l_n)$ and $(r_n)$ can be found in \cite{SeidelPhD,LindnerRoch2010}.
\end{proof}

If the FSM is applicable to $A$ then, for every right hand side $b$, $x_n(j)\to x(j)$ as $n\to\infty$ for every $j\in \Z$, where $x_n$ is the solution to \eqref{eq:Anxn=b} and $x$ that of \eqref{eq:Ax=b}. But this implies that $\|x\|\leq \liminf_{n\to\infty} \|x_n\|$, for every $b$, and hence
\begin{equation} \label{eq:liminf}
\liminf_{n\to\infty} \|A_n^{-1}\|\ \ge\ \|A^{-1}\|;\ \quad \mbox{ and similarly } \quad \liminf_{n\to\infty} \|A_n\|\ \ge\ \|A\|,
\end{equation}
this latter holding whether or not the FSM is applicable to $A$. Complementing this bound, we remark that it can be deduced from \cite[Proposition 3.1]{SeidelSilb13} (or see \cite[Section 6]{HagLiSei}) that
\begin{equation} \label{eq:limsup}
\limsup_{n\to\infty}\|A_{n}^{-1}\| = \sup\{\|A^{-1}\|,\|B_{+}^{-1}\|,\|C_{-}^{-1}\|\},
 \end{equation}
 where this supremum is taken over all limits $B_+$ and $C_-$ in \eqref{eq:ass_matrix}, and is attained as a maximum if the FSM is applicable to $A$. In our arguments below we will not need \eqref{eq:limsup}, however, only the much simpler lower bound \eqref{eq:liminf}.

Versions of Lemma \ref{lem:FSMappl}, \eqref{eq:liminf}, and \eqref{eq:limsup} hold for semi-infinite matrices $A_+=(a_{ij})_{i,j\in\N}\in$ $\BDO(\ell^p(\N))$, with the modification that $l_n=1$, which implies that every limit $B_+$ in \eqref{eq:ass_matrix} is nothing but the matrix $A_+$ again, so that in Lemma \ref{lem:FSMappl} iii), iv) and \eqref{eq:limsup} it is the invertibility of only $A_+$ and $C_-$ that is at issue.

\begin{remark} {\bf -- Reflections.} \label{rem:reflect}
Often we find it convenient to rearrange/reflect the matrices $C_-=(c_{ij})_{i,j=-\infty}^0$ from \eqref{eq:ass_matrix} as $B_+=(c_{-j,-i})_{i,j=0}^\infty$.
This rearrangement $C_-\mapsto B_+$ corresponds to a matrix reflection against the bi-infinite antidiagonal; it can be written as $B_+=RC_-^\top R$, where $R$ denotes the bi-infinite flip $(x_i)_{i\in\Z}\mapsto (x_{-i})_{i\in\Z}$. As an operator on $\ell^p$, one gets $\|B_+\|_p=\|RC_-^\top R\|_p=\|C_-^\top\|_p=\|C_-\|_q$ with\footnote{Here and in what follows we put $\infty^{-1}:=0$, so that $p=1\Rightarrow q=\infty$ and $p=\infty\Rightarrow q=1$.} $p^{-1}+q^{-1}=1$.  When we  speak below about the FSM for $A$ and its ``associated semi-infinite submatrices $B_+$'' we will mean all $B_+$ from \eqref{eq:ass_matrix} plus the reflections $B_+=RC_-^\top R$ of all $C_-$ from \eqref{eq:ass_matrix}.
\end{remark}

A simple choice of cut-off sequences is to take $l_{n}=-n$ and $r_{n}=n$ for $n=1,2,...$. This is called the {\sl full FSM} for $A$. For a semi-infinite matrix $A_+$ the full FSM is to take $l_n= 1$ and $r_n=n$ for $n=1,2,...$. In either case, the full FSM leads to more associated matrices $B_{+}$ (and hence to a smaller chance for applicability of the FSM) than ``thinning out'' those cut-off sequences in a way that suits the matrix $A$ (or $A_+$) at hand. For example, if $A\in\PE(U,V,W)$ then all $B_{+}\in M_{+}(U,V,W)$ are associated to $A$ in case of the full FSM. So, in addition to $A$ itself, all $B_{+}\in M_{+}(U,V,W)$ have to be invertible to make sure the full FSM applies to $A$. That is why, in \cite{LindnerRoch2010}, the cut-offs $l_n$ and $r_n$ have been placed very sparsely and in a special way that leads to all associated $B_{+}$ being Toeplitz.  A simple consequence of Lemma \ref{lem:FSMappl} is the following lemma which also (trivially, by the equivalence of i) and ii) in Lemma \ref{lem:FSMappl}) holds for semi-infinite matrices $A_+=(a_{ij})_{i,j\in\N}$ (with $l_n=1$ in that case).

\begin{lemma} \label{lem:fullFSM}
If the full FSM applies to $A=(a_{ij})_{i,j\in\Z}\in\BDO(\ell^p(\Z))$ then the FSM with any  monotonic cut-off sequences $(l_n)_{n=1}^\infty$ and $(r_n)_{n=1}^\infty$ applies to $A$.
\end{lemma}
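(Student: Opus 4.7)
The plan is to apply Lemma \ref{lem:FSMappl} twice via (i)$\Leftrightarrow$(iii): first in the forward direction to extract invertibility data from the assumed applicability of the full FSM, then in the reverse direction to conclude applicability of the FSM with the other cut-offs. The key observation is that every ``associated'' semi-infinite matrix $B_+$ or $C_-$ produced by an arbitrary monotonic cut-off pair $(l_n),(r_n)$ already arises as an associated matrix for the full FSM, so invertibility passes directly from the latter to the former.

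Concretely, suppose the full FSM applies. By Lemma \ref{lem:FSMappl} (i)$\Rightarrow$(iii), $A$ is invertible and every entrywise limit $B_+$ or $C_-$ in \eqref{eq:ass_matrix} along subsequences of $(-n)_{n=1}^\infty$ and $(n)_{n=1}^\infty$ is invertible. Let $(l_n),(r_n)$ be any monotonic cut-off sequences with $l_n\to-\infty$ and $r_n\to+\infty$, and let $B_+$ be an associated matrix for $(l_n)$, i.e.\ the entrywise limit of $(a_{i+l_{n_k},j+l_{n_k}})_{i,j=0}^\infty$ along some subsequence $(l_{n_k})$ of $(l_n)$. Since $(l_{n_k})$ is a weakly decreasing integer sequence with $l_{n_k}\to-\infty$, a routine extraction produces a strictly decreasing subsubsequence (leaving the entrywise limit unchanged); after dropping finitely many nonnegative terms, this is a subsequence of $(-n)_{n=1}^\infty$. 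Hence $B_+$ is associated to the full FSM and therefore invertible. The analogous argument handles any associated $C_-$ for $(r_n)$. Thus condition (iii) of Lemma \ref{lem:FSMappl} holds for $(l_n),(r_n)$, and (iii)$\Rightarrow$(i) yields applicability.

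The semi-infinite case is easier, as the parenthetical hint in the statement already indicates: with $l_n=1$ fixed the only associated left matrix is $A_+$ itself, and the finite sections $(a_{ij})_{i,j=1}^{r_n}$ for an arbitrary monotonic $(r_n)$ form a subsequence of those for the full FSM, so stability (Lemma \ref{lem:FSMappl} (ii)) transfers immediately to give applicability via (ii)$\Rightarrow$(i). The only nontrivial ingredient overall is the subsubsequence extraction, and for integer sequences tending to $-\infty$ this is elementary; I foresee no genuine obstacle.
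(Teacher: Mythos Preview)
Your proof is correct and follows essentially the same route as the paper: apply Lemma \ref{lem:FSMappl} (i)$\Leftrightarrow$(iii) in both directions, observing that any associated matrix $B_+$ or $C_-$ for the arbitrary monotonic cut-offs already occurs as an associated matrix for the full FSM since $(l_n')$ and $(r_n')$ are (eventually) subsequences of $(-1,-2,\dots)$ and $(1,2,\dots)$. Your extraction of a strictly decreasing subsubsequence is a small extra bit of care that the paper glosses over with ``at least for sufficiently large $n$''.
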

\begin{proof}
Suppose that the full FSM method is applicable to $A$. Fix two arbitrary monotonic cut-off sequences $(l_n)_{n=1}^\infty\to -\infty$ and $(r_n)_{n=1}^\infty \to+\infty$ and look at two associated matrices $B_+$ and $C_-$ from \eqref{eq:ass_matrix} with respect to subsequences $(l_n')$ and $(r_n')$ of $(l_n)$ and $(r_n)$. Since $(l_n)$ and $(r_n)$ are (at least for sufficiently large $n$) subsequences of the sequences $(-1,-2,-3,...)$ and $(1,2,3,...)$ that are used for the full FSM, the same is true for $(l_n')$ and $(r_n')$. So $B_+$ and $C_-$ are also associated to $A$ in case of the full FSM and so, by the equivalence of i) and iii) in Lemma \ref{lem:FSMappl}, are invertible together with $A$. Again by the equivalence of i) and iii) in Lemma \ref{lem:FSMappl}, since $A$ and all matrices $B_+$ and $C_-$ associated to these cut-off sequences are invertible, the FSM with cut-off sequences $(l_n)$ and $(r_n)$ applies to $A$.
\end{proof}

Lemma \ref{lem:fullFSM} is why we place particular focus on the full FSM: it is the most demanding version of the FSM -- if this version applies then all cut-off sequences will be fine.

\medskip

{\bf Main Results. } Having set the notations, let us now sketch our main results. For operators $A\in\PE(U,V,W)$ and $B_+\in\PE_+(U,V,W)$, we are interested in the four sets
\[
\spess A,\quad \spec A,\quad \spess B_+\quad\textrm{ and }\quad\spec B_+.
\]
From \cite{LindnerRoch2010} we know that the first three sets coincide,
\begin{equation} \label{eq:Sintro}
\spess A\ =\ \spec A\ =\ \spess B_+,
\end{equation}
and are independent of $A$ and $B_+$, as long as these are pseudoergodic. We will show that also the fourth set, $\spec B_+$, is independent of $B_+$, and we indicate what the difference between the two sets is. The key to describe the difference between $\spess B_+$ and $\spec B_+$ is a new result that has a famous cousin in the theory of Toeplitz operators: Coburn's Lemma \cite{Coburn} says that, for every bounded and nonzero Toeplitz operator $T_{+}$, one has $\alpha(T_{+})=0$ or $\beta(T_{+})=0$, so that $T_{+}$ is known to be invertible as soon as it is Fredholm and has index zero. We prove that the same statement holds with the Toeplitz operator $T_{+}$ replaced by any $B_+\in M_+(U,V,W)$ provided that $0$ is not in \eqref{eq:Sintro}. So $\spess B_+$ and $\spec B_+$ differ by the set of all $\lambda\in\C$ for which $B_+-\lambda I_+$ is Fredholm with a nonzero index. We give new upper and lower bounds on the sets $\spess B_+$ and $\spec B_+$, and we find easily computable sets $G$ that close the gap between the two, i.e., sets $G$ for which it holds  that $\spec B_+ = G\cup \spess B_+$.

On the other hand, knowledge about invertibility of semi-infinite matrices $B_+\in M_+(U,V,W)$ is all we need to study applicability of the FSM, so that our new Coburn-type result has immediate consequences for the applicability of the FSM (even the full FSM) to pseudoergodic operators. In \cite{LindnerRoch2010} the question of the applicability of the full FSM to an operator $A_+\in\PE_+(U,V,W)$ could not be settled (nor could, in \cite{LindnerRoch2010}, the applicability of the full FSM to $A\in \PE(U,V,W)$: for brevity we just focus on the semi-infinite case in this paragraph). Instead, the cut-off sequences for the FSM were chosen (``adapted to $A_+$'') in a way that made all associated semi-infinite matrices $C_-$ in \eqref{eq:ass_matrix} Toeplitz. Classical Coburn then implied invertibility of all these $C_-$, as a consequence of their being Fredholm of index zero, which holds as long as $A_+$ is invertible. 
Thanks to the new Coburn result, this ``adaptation'' twist is no longer needed. The full FSM can be seen to apply by exactly the same argument, but with the associated operators $C_-$ no longer required to be Toeplitz.

Perhaps one of the main messages of our paper is that operators in $\PE_+(U,V,W)$ (termed ``stochastic Toeplitz operators'' in \cite{TrefContEmb}) behave a lot like usual Toeplitz operators when it comes to
\begin{itemize}\itemsep-1mm
\item the gap between essential spectrum and spectrum (both enjoy a lemma of Coburn type), and
\item having an applicable FSM (in both cases, the FSM applies iff the operator is invertible).
\end{itemize}
Similar coincidences can be shown for operators in $\PE(U,V,W)$ (the ``stochastic Laurent operators'' in the terminology of \cite{TrefContEmb}) and usual Laurent operators.

In Section 3 of our paper we show that the full FSM applies to $A_+\in\PE_+(U,V,W)$, and automatically to all other operators in $M_+(U,V,W)$ and in $M(U,V,W)$, as soon as $A_+$ is invertible. Even more, we show that all matrices in $\Mfin(U,V,W)$ are invertible if $A_+$ is invertible, so that the truncated systems \eqref{eq:Anxn=b} are uniquely solvable for all $n\ge 1$ (as opposed to $n\ge n_0$ with an $n_0$ that nobody knows). If $A_+\in\PE_+(U,V,W)$ is not invertible but Fredholm with index $\kappa=\kappa(U,V,W)\ne 0$, then the full (or any) FSM cannot be applied to any $A\in M(U,V,W)$. We however show that shifting the system $Ax=b$ down by $\kappa$ rows leads to a system to which the full (and hence any) FSM applies.

In Section 4 we bound and compare the norms, and the norms of inverses, of bi-infinite, semi-infinite and finite Jacobi matrices over $(U,V,W)$. This, in particular, allows the study of the resolvent norms, and hence the pseudospectra, of these operators and matrices. For example we show for $A_+\in \PE_+(U,V,W)$ and $A\in \PE(U,V,W)$ that, analogously to the corresponding result for the spectrum, $\speps^p A_+ = \speps^p A \cup G$, for $\eps>0$ and $p=2$. Here $G$ is any of the sets discussed above that closes the gap between $\spec A=\spess A_+$ and $\spec A_+$. And we are able to make close connections between the pseudospectra of finite and infinite matrices, for example showing that the union of all finite matrix pseudospectra, $\cup_{F\in \Mfin(U,V,W)}\specn_\eps^p F$, coincides with $\speps^p A_+$, for $p=2$. Our results in this section are a substantial generalisation of results in a study \cite{CCL2} of spectra and pseudospectra of a particular pseudoergodic Jacobi operator, the Feinberg-Zee random hopping matrix ($U=W=\{\pm 1\}$, $V=\{0\}$). Our results on the
relation between norms of inverses (and hence
pseudospectra) of finite and infinite stochastic Toeplitz
matrices, in particular our results on the convergence of norms, condition numbers, and pseudospectra of finite sections to their infinite stochastic Toeplitz counterparts, reproduce, in the case that $U$, $V$, and $W$ are singletons, results for (classical) Toeplitz operators and matrices \cite{ReichelTref,Boe94,BoeGruSilb97}.

{\bf Main techniques.} Besides the limit operator techniques behind Lemmas \ref{lem:limop} and \ref{lem:FSMappl} that were the core of \cite{LindnerRoch2010}, our second main tool is a ``glueing technique'' -- see \eqref{eq:matB} and \eqref{eq:matB+-} -- that is used in the proofs of two of our main results, Theorems \ref{thmA} and \ref{th:FSM2} as well as in their quantitative versions, Propositions \ref{prop:M+} and \ref{prop:F1}. This latter technique often complements the earlier in terms of relating finite, semi- and bi-infinite matrices to each other.

\section{Spectra of Pseudoergodic Operators} \label{sec:spectra}
 We recall, as discussed in Section \ref{sec:intro},  that throughout the paper we consider the operators in $M(U,V,W)\supset \PE(U,V,W)$ and $M_+(U,V,W)$ $\supset \PE_+(U,V,W)$ as acting on $\ell^p(\Z)$ and $\ell^p(\N)$, respectively, for some $p\in [1,\infty]$, and that the invertibility, Fredholmness (and also the index if Fredholm) of these operators is independent of $p$.
In Theorem 2.1 of \cite{LindnerRoch2010} the following was shown:
\begin{proposition} \label{prop:i-vi}
The following statements are equivalent:\\[-7mm]
\begin{itemize}\itemsep-1mm
\item[(i)] one operator in $\PE(U,V,W)$ is Fredholm,
\item[(ii)] one operator in $\PE(U,V,W)$ is invertible,
\item[(iii)] all operators in $M(U,V,W)$ are Fredholm,
\item[(iv)] all operators in $M(U,V,W)$ are invertible, 
\item[(v)] one operator in $\PE_{+}(U,V,W)$ is Fredholm,
\item[(vi)] all operators in $M_{+}(U,V,W)$ are Fredholm.
\end{itemize}
\end{proposition}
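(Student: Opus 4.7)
The plan is to close a cycle of implications by leaning almost entirely on Lemma \ref{lem:limop}. The two key facts are part (c), characterising Fredholmness of a band-dominated operator as invertibility of each of its limit operators, and part (d), which identifies $\opsp(A)=M(U,V,W)$ for any pseudoergodic $A$, whether semi- or bi-infinite.

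First I would handle the bi-infinite block (i)--(iv). The direction (ii) $\Rightarrow$ (i) is trivial since invertible operators are Fredholm. The decisive step is (i) $\Rightarrow$ (iv): given a Fredholm $A\in\PE(U,V,W)$, Lemma \ref{lem:limop}(d) gives $\opsp(A)=M(U,V,W)$, and Lemma \ref{lem:limop}(c) then forces every element of $M(U,V,W)$ to be invertible. The remaining implications are elementary: (iv) $\Rightarrow$ (iii) and (iv) $\Rightarrow$ (ii) follow from the inclusion $\PE(U,V,W)\subseteq M(U,V,W)$ together with ``invertible implies Fredholm''; and (iii) $\Rightarrow$ (i) holds because any $A\in\PE(U,V,W)$ sits in $M(U,V,W)$ and so is Fredholm by hypothesis.

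Next I would bring in the semi-infinite conditions. For (v) $\Rightarrow$ (iv), essentially the same argument applies: a Fredholm $A_+\in\PE_+(U,V,W)$ has, by Lemma \ref{lem:limop}(d), $\opsp(A_+)=M(U,V,W)$, and Lemma \ref{lem:limop}(c) then yields invertibility of every element of $M(U,V,W)$, which is (iv). Crucially, the limit operators of a semi-infinite $A_+$ are bi-infinite, so this step delivers invertibility on $\ell^p(\Z)$ and not merely on $\ell^p(\N)$. For (iv) $\Rightarrow$ (vi), take any $B_+\in M_+(U,V,W)$; each of its limit operators is a bi-infinite band matrix whose entries still lie in the closed sets $U$, $V$, $W$, hence belongs to $M(U,V,W)$ and is invertible by (iv); applying Lemma \ref{lem:limop}(c) once more gives that $B_+$ is Fredholm. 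Finally, (vi) $\Rightarrow$ (v) is trivial as $\PE_+(U,V,W)\subseteq M_+(U,V,W)$.

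No single step of this plan is really the hard part; everything has been engineered into Lemma \ref{lem:limop}. The place where one needs to be a little careful is the ``bridge'' via (iv): limit operators of semi-infinite band-dominated operators are always bi-infinite, so the Fredholm criterion for $A_+\in M_+(U,V,W)$ requires scrutinising the full bi-infinite set $M(U,V,W)$ rather than $M_+(U,V,W)$. That observation is exactly what makes (iv) the natural pivot joining the two halves of the equivalence, and it also explains why no ``all operators in $M_+(U,V,W)$ are invertible'' statement can be added at this stage --- that stronger conclusion will require the new Coburn-type lemma developed later in the paper.
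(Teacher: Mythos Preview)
Your proposal is correct and matches the paper's approach: the paper simply says that all these equivalences ``follow quickly from Lemma \ref{lem:limop}'', and your write-up is exactly the routine unpacking of that claim using parts (c) and (d) with (iv) as the pivot. Your closing remark about why invertibility of all of $M_+(U,V,W)$ cannot yet be added is also spot-on and anticipates the role of Theorem \ref{thmA}.
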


All these equivalences follow quickly from Lemma \ref{lem:limop}. Since the occurrence of one (and hence all) of the properties $(i)$--$(vi)$ is obviously not a matter of a concrete operator but rather of the interplay between $U$, $V$ and $W$, we will call the triple $(U,V,W)$ {\sl compatible} if $(i)$--$(vi)$ hold. We will see below in Proposition \ref{prop:PE} that, if $(U,V,W)$ is compatible, then also the inverses in (iv) are uniformly bounded, in fact bounded above by $\|A^{-1}\|$ for any $A\in \PE(U,V,W)$.

The equivalence of $(i)$, $(ii)$, $(iv)$, $(v)$ and $(vi)$ can also be expressed as follows: for every $A\in\PE(U,V,W)$ and every $B_{+}\in\PE_{+}(U,V,W)$, it holds that
\begin{equation} \label{eq:S1}
\spess A\ =\ \spec A\ =\ \bigcup_{C\in M(U,V,W)} \spec C\ =\ \bigcup_{C_{+}\in M_{+}(U,V,W)} \spess C_{+}\ =\ \spess B_{+}.
\end{equation}
Let us denote the set \eqref{eq:S1} by $\S(U,V,W)$ since it clearly depends on  $U$, $V$ and $W$ only and not on the choice of $A$ or $B_{+}$. Then $(U,V,W)$ is compatible iff $0\not\in \S(U,V,W)$.

To get a first idea of the set $\S:=\S(U,V,W)$, let us look at simple lower and upper bounds on $\S$, our discussion here taken from \cite[Theorem 2.1 a)]{LindnerRoch2010}. A lower bound on
\begin{equation} \label{eq:S2}
\S\ =\ \bigcup_{C\in M(U,V,W)} \spec C
\end{equation}
is clearly found by taking this union over a set of simple matrices $C\in M(U,V,W)$ for which $\spec C$ is known explicitly or is easily computed. Natural candidates are matrices $C$ with periodic diagonals, and the simplest among those are matrices with constant diagonals -- so-called Laurent (or bi-infinite Toeplitz) matrices.

If $C$ is the (only) element of $M(\{u\},\{v\},\{w\})$ with some $u\in U$, $v\in V$ and $w\in W$, i.e.~$C$ is the tridiagonal Laurent matrix with $u$, $v$ and $w$ on its diagonals, then \cite{BoeGru,BoeSi2}
\begin{equation} \label{eq:defE}
\spec C\ =\ \{ut+v+wt^{-1}\ :\ t\in\T\}\ =:\ E(u,v,w)
\end{equation}
is the ellipse depicted in Figure \ref{fig:ellipse}.a below. Note that
\[
E(u,v,w)\ =\ v\,+\,E(u,w)\qquad\textrm{with}\qquad E(u,w)\ :=\ E(u,0,w).
\]
Also note that \eqref{eq:defE} gives the ellipse $E(u,v,w)$ an orientation, based on the counter-clockwise orientation of the unit circle $\T$: the ellipse is oriented counter-clockwise if $|u|>|w|$, clockwise if $|u|<|w|$, and collapses into a line segment if $|u|=|w|$. From \eqref{eq:S2} and $\spec C=E(u,v,w)$ if $C$ is Laurent, we get that the union of all ellipses $E(u,v,w)$ with $u\in U$, $v\in V$ and $w\in W$ is a simple lower bound on $\S$:
\begin{equation} \label{eq:lowerS}
\bigcup_{u\in U, v\in V, w\in W} E(u,v,w)\ \subset\ \S(U,V,W).
\end{equation}
Because we will come back to Laurent and Toeplitz operators, let us from now on write
\[
T(u,v,w)\ :=\ uS\,+vI\,+wS^{-1}\qquad\textrm{and}\qquad T_+(u,v,w)
\]
for the Laurent operator $T\in M(\{u\},\{v\},\{w\})$, acting on $\ell^p(\Z)$, and for its compression $T_+$ to $\ell^p(\N)$, which is a Toeplitz operator. Here we write $S$ for the forward shift operator, $S:x\mapsto y$ with $y(j+1)=x(j)$ for all $j\in\Z$, and $S^{-1}$ for the backward shift. From \eqref{eq:S1} and \eqref{eq:defE} (or \cite{BoeGru,BoeSi2}), $\spess T_+(u,v,w)=\spec T(u,v,w)=E(u,v,w)$. Further,
\begin{equation} \label{eq:specT+}
\spec T_+(u,v,w)\ =\ \conv E(u,v,w)
\end{equation}
is the same ellipse but now filled \cite{BoeGru,BoeSi2}. (Here $\conv S$ denotes the convex hull of a set $S\subset \C$.)  Let $\Ein(u,v,w)$ and $\Eout(u,v,w)$ denote the interior and exterior, respectively, of the ellipse $E(u,v,w)$, with the understanding that $\Ein(u,v,w)=\varnothing$ and $\Eout(u,v,w)=\C\setminus E(u,v,w)$ when $|u|=|w|$ and the ellipse $E(u,v,w)$ degenerates to a straight line. The reason why the spectrum of a Toeplitz operator $T_+$ is obtained from the spectrum of the Laurent operator $T$ (which is at the same time the essential spectrum of both $T_+$ and $T$) by filling in the hole $\Ein(u,v,w)$ can be found in the classical Coburn lemma \cite{Coburn}. We will carry that fact over to stochastic Toeplitz and Laurent operators.
A key role will also be played by the following index formula.
Let $\wind(\Gamma,z)$ denote the winding number (counter-clockwise) of an oriented closed curve $\Gamma$ with respect to a point $z\not\in \Gamma$. For $0\not\in E(u,v,w)$, so that $T_+$ is Fredholm, it holds that \cite{BoeGru,BoeSi2}
\begin{equation} \label{rq:index}
\ind T_+(u,v,w)\ =\ -\wind(E(u,v,w),0)\ =\ \left\{\begin{array}{ll}
                                                0,\ & 0\ \in\ \Eout(u,v,w), \\
                                                \sign(|w|-|u|),\ & 0\ \in\ \Ein(u,v,w).
                                              \end{array}\right.
\end{equation}

To get a simple upper bound on $\S$, write $A\in\PE(U,V,W)$ as $A=D+T$ with diagonal part $D=\diag(v_{i})$ and off-diagonal part $T$ and think of $A$ as a perturbation of $D$ by $T$ with $||T||\le\eps$, where $\eps:=u^{*}+w^{*}$ and
\[
u^{*}\ :=\ \max_{u\in U}|u|, \qquad w^{*}\ :=\ \max_{w\in W}|w|.
\]
Since $A$ is in the $\eps$-neighbourhood of $D$, its spectrum $\spec A=\S$ is in the $\eps$-neighbourhood of $\spec D\subset V$. (Note that $D$ is normal or look at Lemma 3.3 in \cite{LindnerRoch2010}.) In short,
\begin{equation} \label{eq:upperS}
\S(U,V,W)\ \subset\ V+(u^{*}+w^{*})\ovD
\end{equation}
(recall that $\D:=\{z\in\C:|z|<1\}$ is the open unit disk, and $\ovD$ is its closure). Note that the same argument, and hence the same upper bound, applies to the spectra of all (singly or bi-)infinite and all finite Jacobi matrices over $U$, $V$ and $W$.

Sometimes equality holds in \eqref{eq:upperS} but often it does not. For $U=\{1\}$, $V=\{0\}$ and $W=\T$, the lower \eqref{eq:lowerS} and upper bound \eqref{eq:upperS} on $\S$ coincide so that equality holds in \eqref{eq:upperS} saying that $\S=2\ovD$. If we change $W$ from $\T$ to $\{-1,1\}$ then the right-hand side of \eqref{eq:upperS} remains at $2\ovD$ while $\S$ is now smaller (it is properly contained in the square with corners $\pm 2$ and $\pm 2\ri$, see \cite{CCL2,CWDavies2011}). Taking $W$ even down to just $\{1\}$, the spectrum $\S$ clearly shrinks to $[-2,2]$ with the right-hand side of \eqref{eq:upperS} still at $2\ovD$. So the gap in \eqref{eq:upperS} can be considerable, or nothing, or anything in between, really.

Equality \eqref{eq:S1} contains the formula
\[
\spess B_{+}\ =\ \bigcup_{C_{+}\in M_{+}(U,V,W)} \spess C_{+}\ =\ \S
\]
for all $B_{+}\in\PE_{+}(U,V,W)$. One of our new results, Corollary \ref{cor:specB+} below, is that
\begin{equation} \label{eq:S+}
\spec B_{+}\ =\ \bigcup_{C_{+}\in M_{+}(U,V,W)} \spec C_{+}\ =:\ \S_{+}
\end{equation}
holds independently of $B_{+}\in\PE_{+}(U,V,W)$.

Upper and lower bounds on $\S_{+}=\S_{+}(U,V,W)$ can be derived in the same way as above for $\S$. This time, because of \eqref{eq:specT+}, the ellipses in the lower bound \eqref{eq:lowerS} have to be filled in, while the upper bound from \eqref{eq:upperS} remains the same, so that
\begin{equation} \label{eq:lowerupperS+}
\bigcup_{u\in U, v\in V, w\in W} \conv E(u,v,w)\ \subset\ \S_{+}(U,V,W)\ \subset\ V+(u^{*}+w^{*})\ovD.
\end{equation}
The results in this section will also make precise the difference between $\S$ and $\S_{+}$.

For nonzero Toeplitz operators $T_{+}$ (semi-infinite matrices with constant diagonals), acting boundedly on $\ell^{p}(\N)$, the following classical result fills the gap between essential spectrum and spectrum: at least one of the two integers, $\alpha(T_{+})$ and $\beta(T_{+})$, is always zero. So if their difference is zero (i.e.~$T_{+}$ is Fredholm with index zero) then both numbers are zero (i.e.~$T_{+}$ is injective and surjective, hence invertible). This is Coburn's Lemma \cite{Coburn}, which was also found, some years earlier, by Gohberg \cite{Goh58} (but for the special case of Toeplitz operators with continuous symbol). Here is a new cousin of that more than 50 year old lemma:
\begin{theorem} \label{thmA}
If $(U,V,W)$ is compatible (i.e.~(i)--(vi) hold in Proposition \ref{prop:i-vi}) then every $B_+\in M_+(U,V,W)$ is Fredholm and at least one of the non-negative integers $\alpha(B_+)$ and $\beta(B_+)$ is zero.
\end{theorem}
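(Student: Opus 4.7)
The Fredholm property of every $B_+ \in M_+(U,V,W)$ is exactly the content of Proposition \ref{prop:i-vi}(vi), so the only substantive task is to show that $\alpha(B_+)=0$ or $\beta(B_+)=0$. I plan to argue by contradiction: assume both $\alpha(B_+)\geq 1$ and $\beta(B_+)\geq 1$, and construct a nonzero $\tilde x \in \ell^p(\Z)$ annihilated by some $A \in M(U,V,W)$, contradicting Proposition \ref{prop:i-vi}(iv). The starting data is $x \in \ell^p(\N)\setminus\{0\}$ with $B_+ x = 0$, and (by the duality pairing between $\ell^p$ and $\ell^q$, $1/p+1/q=1$, with the usual $\infty$-convention) $y \in \ell^q(\N)\setminus\{0\}$ with $B_+^\top y = 0$.

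A preliminary embedding-by-zero argument shows that $\alpha(B_+), \beta(B_+) \leq 1$ and moreover that one may assume $x_1\neq 0$ and $y_1\neq 0$. Indeed, if some kernel element $\tilde x$ has $\tilde x_1 = 0$ (which is automatic if $\alpha \geq 2$, by taking a linear combination), extend to $\hat x \in \ell^p(\Z)$ by $\hat x_i = \tilde x_i$ for $i\geq 1$ and $\hat x_i = 0$ for $i\leq 0$, and take any $A \in M(U,V,W)$ whose entries at indices $i,j\geq 1$ coincide with $B_+$. Row by row: $(A\hat x)_i = (B_+\tilde x)_i = 0$ for $i\geq 2$; at $i=1$ one has $(A\hat x)_1 = A_{10}\hat x_0 + v_1\tilde x_1 + w_1\tilde x_2 = (B_+\tilde x)_1 = 0$; at $i=0$ the residual is $A_{01}\hat x_1 = 0$; and the rows $i\leq -1$ vanish identically. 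Hence $A\hat x = 0$, contradicting (iv); a symmetric argument on $\ell^q(\Z)$ handles $\beta$ and $y$.

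The key glueing construction uses the anti-diagonal reflection $C_-$ of $B_+$ onto non-positive indices, defined by $(C_-)_{ij} := (B_+)_{-j+1,-i+1}$ for $i,j\leq 0$. A short check shows that $C_-$ has sub-, main-, and super-diagonal entries in $U$, $V$, $W$, respectively, and that the reflected vector $\hat y_i := y_{-i+1}$ satisfies $C_-\hat y = 0$. Define $A\in M(U,V,W)$ by $A_{ij} = (B_+)_{ij}$ for $i,j\geq 1$, $A_{ij} = (C_-)_{ij}$ for $i,j\leq 0$, and junction entries $A_{10}\in U$, $A_{01}\in W$ to be chosen. For $\tilde x := (\ldots,\hat y_{-1},\hat y_0,x_1,x_2,\ldots)$, all rows cancel except the two junction rows, where the kernel equations for $B_+$ and $C_-$ together with the identity $\sum_k (B_+)_{k,1}y_k = (B_+^\top y)_1 = 0$ yield $(A\tilde x)_1 = A_{10}\, y_1$ and $(A\tilde x)_0 = A_{01}\, x_1$.

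The main obstacle is this junction. In the benign case $0\in U\cap W$ one may take $A_{10} = A_{01} = 0$, both residuals vanish, and the contradiction is immediate. Otherwise, since we have already reduced to $x_1,y_1\ne 0$, I would patch by inserting a finite Jacobi buffer of length $m$ from $\Mfin(U,V,W)$ between $C_-$ and a shifted copy of $B_+$. The buffer values $\tilde x_1,\ldots,\tilde x_m$ then satisfy the two endpoint conditions $\tilde x_1 = \tilde x_m = 0$ (forced at the non-zero-able junctions), prescribed forcing terms $w'_1\tilde x_2 = -u'_1 y_1$ and $u'_m\tilde x_{m-1} = -w'_m x_1$ at the adjacent rows, and an internal three-term recurrence driven by $3(m-2)$ interior coefficients freely chosen from $U\times V\times W$. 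Matching the forward propagation from $(\tilde x_1,\tilde x_2)$ to the prescribed terminal pair $(\tilde x_{m-1},\tilde x_m)$ through this parameter family is the technical heart of the argument; I expect it to be achievable for sufficiently large $m$ by a reachability/continuity argument exploiting the compactness and nonemptiness of $U$, $V$, $W$.
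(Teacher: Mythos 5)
Your reduction to $\alpha(B_+),\beta(B_+)\le 1$ with $x_1\neq 0$ and $y_1\neq 0$ is correct, and your overall strategy (reflect $B_+$ onto the non-positive indices, glue, and contradict the invertibility of the resulting bi-infinite operator in $M(U,V,W)$) is exactly the paper's. But the proof is not complete: everything hinges on resolving the junction, and your proposed resolution does not work. In the direct gluing the two junction residuals $A_{10}\,y_1$ and $A_{01}\,x_1$ are \emph{separate} equations; since you have arranged $x_1,y_1\neq 0$, they force $A_{10}=0\in U$ and $A_{01}=0\in W$, which is unavailable in general (and rescaling the two halves by constants does not help, since each residual still contains only one of the two scale factors). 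Your fallback --- a finite Jacobi buffer plus a ``reachability/continuity'' argument --- is precisely the unproven technical heart, and it cannot work as stated: when $U$, $V$, $W$ are singletons (the classical tridiagonal Coburn case) the buffer has \emph{no} free parameters at all, the three-term recurrence propagates the forced initial pair deterministically, and there is no reason it should ever reach the prescribed terminal pair. Compactness and nonemptiness of $U,V,W$ give you nothing to vary here.

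The paper's fix is a one-line trick you are missing: insert a single \emph{zero} entry between the two glued halves. That is, take $z=(\dots,ay_2,ay_1,0,bx_1,bx_2,\dots)$ with the $0$ at position $0$, and let the corresponding row of $B$ have arbitrary entries $b_{0,-1}\in U$, $b_{0,0}\in V$, $b_{0,1}\in W$. Because $z_0=0$, the junction entries $b_{1,0}$ and $b_{-1,0}$ multiply $0$, so the rows $j=1$ and $j=-1$ close up using only $B_+x=0$ and $B_+^\top y=0$; the sole surviving residual is the single equation $(Bz)_0=b_{0,-1}\,a\,y_1+b_{0,1}\,b\,x_1=0$, which is one homogeneous linear equation in the two free scalars $a,b$ and hence always has a nontrivial solution. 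This removes the need for your preliminary reduction, the buffer, and the reachability claim entirely; one then works on $\ell^\infty(\Z)$, where $z$ automatically lives, using that invertibility of operators in $M(U,V,W)$ is independent of $p$.
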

\begin{proof}
Let $(U,V,W)$ be compatible and take $B_+\in M_+(U,V,W)$ arbitrarily. Then $B_+$, with matrix representation $(\tilde b_{ij})_{i,j\in\N}$, is Fredholm since $(i)$--$(vi)$ of Proposition \ref{prop:i-vi} hold. Suppose that $\alpha(B_+)>0$ and $\beta(B_+)>0$. Then there exist
$x\in \ell^p(\N)$ and $y\in \ell^q(\N)$, with $p^{-1}+q^{-1}=1$, $x\neq 0$, and $y\neq 0$, such that $B_+x = 0$ and $B_+^\top y=0$.
 Let $a,b\in \C$, define $z\in \ell^\infty(\Z)$ by
\[
z = (\cdots, ay_2, ay_1,\boxed{0},bx_1,bx_2,\cdots)^\top,
\]
where the box marks the entry $z_0$,
and define $B\in M(U,V,W)$ by its matrix representation $(b_{ij})_{i,j\in\Z}$ with
\[
b_{ij} = \left\{\begin{array}{cc}
                  \tilde b_{ij}, & i,j\in \N, \\
                  0, & |i-j|>1, \\
                  \tilde b_{-j,-i}, & i,j\in -\N.
                \end{array}
\right.
\]
The remaining entries $b_{0,-1},b_{1,0}\in U$, $b_{0,0}\in V$ and $b_{0,1},b_{-1,0}\in W$ of $B$ may be arbitrary.
Then $B_+x=0$ and $B_+^\top y=0$ together imply that $(Bz)_j = 0$ for all $j\neq 0$. Further,
\[
(Bz)_0 = b_{0,-1}ay_1 + b_{0,1} b x_1.
\]
Clearly we can pick $a,b\in \C$ with $a\neq 0$ or $b\neq 0$ (so that $z\neq 0$) to ensure that also $(Bz)_0=0$ which implies that $Bz=0$, so that $B$ is not invertible on $\ell^\infty(\Z)$. But this is a contradiction since $B\in M(U,V,W)$ is invertible by our assumption that the triple $(U,V,W)$ is compatible.
\end{proof}

An immediate corollary of \cite[Theorem 2.1]{LindnerRoch2010} or Proposition \ref{prop:i-vi} and our Theorem \ref{thmA} is the following Coburn lemma for pseudoergodic operators. This result reduces to the usual Coburn lemma for Toeplitz operators (at least to the special case in which the Toeplitz operator is tridiagonal) in the case that $U$, $V$, and $W$ are singleton sets.

\begin{corollary}\label{corB}
If $B_+\in \PE_+(U,V,W)$ is Fredholm then at least one of $\alpha(B_+)$ and $\beta(B_+)$ is zero.
\end{corollary}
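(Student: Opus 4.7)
The plan is to derive this as an immediate consequence of Proposition~\ref{prop:i-vi} combined with Theorem~\ref{thmA}. The observation is that Fredholmness of a single pseudoergodic semi-infinite operator is exactly condition (v) of Proposition~\ref{prop:i-vi}, which is equivalent to all of (i)--(vi), i.e., to the compatibility of the triple $(U,V,W)$. Once compatibility is in hand, Theorem~\ref{thmA} applies to every element of $M_+(U,V,W)$, not just to pseudoergodic ones.

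Concretely, I would proceed as follows. First, assume $B_+\in \PE_+(U,V,W)$ is Fredholm. Since $\PE_+(U,V,W)\subset M_+(U,V,W)$, this is exactly the hypothesis of condition (v) in Proposition~\ref{prop:i-vi}, so the whole chain (i)--(vi) holds and hence $(U,V,W)$ is compatible in the sense defined just after Proposition~\ref{prop:i-vi}. Second, having established compatibility, I invoke Theorem~\ref{thmA} applied to the same operator $B_+$ (which is a legitimate element of $M_+(U,V,W)$) to conclude that $\alpha(B_+)=0$ or $\beta(B_+)=0$.

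There is no real obstacle here — the work has already been done in Theorem~\ref{thmA}; the role of the corollary is simply to record the statement in the special pseudoergodic setting, where the a priori Fredholm hypothesis of Theorem~\ref{thmA} (compatibility of $(U,V,W)$) is automatic from the Fredholmness of the single operator $B_+$ itself. The only point worth making explicit in the write-up is the chain $\PE_+(U,V,W)\subset M_+(U,V,W)$ and the use of the equivalence of (v) with (iii)/(iv) in Proposition~\ref{prop:i-vi}, so that the Fredholm hypothesis for one $B_+\in \PE_+(U,V,W)$ upgrades to invertibility of all operators in $M(U,V,W)$, which is the actual input needed in the proof of Theorem~\ref{thmA}.
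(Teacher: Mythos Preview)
Your proposal is correct and follows exactly the same route as the paper: Fredholmness of one $B_+\in\PE_+(U,V,W)$ gives condition~(v) of Proposition~\ref{prop:i-vi}, hence compatibility of $(U,V,W)$, and then Theorem~\ref{thmA} applied to $B_+\in M_+(U,V,W)$ yields the conclusion. The paper's proof is just a two-sentence version of what you wrote.
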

\begin{proof}
If an operator in $\PE_{+}(U,V,W)$ is Fredholm then $(U,V,W)$ is compatible by Proposition \ref{prop:i-vi}. Now apply Theorem \ref{thmA} to $B_+$.
\end{proof}

Similarly to the situation for Toeplitz operators, one can now derive invertibility of operators in $M_+(U,V,W)$ from their Fredholmness and index. The additional result here that every $B_+\in M_+(U,V,W)$ is Fredholm with the same index was first pointed out in \cite{LindnerRoch2010}, as a consequence of the main result from \cite{RaRoRoe} (see our Lemma  \ref{lem:limop} e)), and will play an important role in our arguments.

\begin{corollary} \label{corC}
Let $(U,V,W)$ be compatible. Then every $B_+\in M_+(U,V,W)$ is Fredholm with the same index
\[
\kappa(U,V,W)\ :=\ \ind B_+\ \in\ \{-1,0,1\}.
\]
Further,
if $\ind B_+ =0$ then $B_+$ is invertible; if $\ind B_+=1$ then $\alpha(B_+)=1$ and $\beta(B_+)=0$; and if $\ind B_+=-1$ then $\alpha(B_+)=0$ and $\beta(B_+)=1$. These statements are independent of the choice of $B_+\in M_+(U,V,W)$.
\end{corollary}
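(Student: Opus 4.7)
The plan is to establish the four assertions in sequence. Fredholmness of every $B_+\in M_+(U,V,W)$ is immediate from compatibility via condition (vi) of Proposition \ref{prop:i-vi}, so that $\ind B_+$ is a well-defined integer in every case.

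Next, I would show that this index does not depend on the particular choice of $B_+$. Fix any pseudoergodic $A_+\in\PE_+(U,V,W)$, which exists and is Fredholm by compatibility. Given an arbitrary $C_+\in M_+(U,V,W)$, extend its three diagonals to a bi-infinite matrix $B\in M(U,V,W)$ by prescribing arbitrary entries from $U$, $V$ and $W$ at the positions with row or column index $\le 0$; such extensions exist because $U$, $V$, $W$ are nonempty. By construction, the principal submatrix of $B$ on indices $\N$ coincides with $C_+$, and by Lemma \ref{lem:limop}(d) we have $B\in M(U,V,W)=\opsp(A_+)$, so $B$ is a limit operator of $A_+$. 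Lemma \ref{lem:limop}(e) then yields $\ind C_+=\ind A_+$. Hence the index is a constant $\kappa=\kappa(U,V,W)$ on $M_+(U,V,W)$.

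To pin down the range of $\kappa$, I would specialize to tridiagonal Toeplitz members of $M_+(U,V,W)$. For any choice of $u\in U$, $v\in V$, $w\in W$, the Toeplitz operator $T_+(u,v,w)$ lies in $M_+(U,V,W)$, so $\kappa=\ind T_+(u,v,w)$. Formula \eqref{rq:index} gives this index as either $0$ or $\sign(|w|-|u|)$, so $\kappa\in\{-1,0,1\}$.

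Finally, the classification of $(\alpha(B_+),\beta(B_+))$ follows by combining the identity $\alpha(B_+)-\beta(B_+)=\kappa$ with the Coburn-type statement of Corollary \ref{corB}, which forces $\alpha(B_+)\cdot\beta(B_+)=0$. If $\kappa=0$ then both vanish and $B_+$ is invertible; if $\kappa=1$ then $\beta(B_+)=0$ (else $\alpha(B_+)=0$ would give $\kappa\le 0$), so $\alpha(B_+)=1$; the case $\kappa=-1$ is symmetric. I do not anticipate a serious obstacle; the only step requiring care, and the closest thing to an obstacle, is the index bookkeeping in the second paragraph: verifying that the extension $C_+\mapsto B$ really lies in $M(U,V,W)$ and that the submatrix picked out by Lemma \ref{lem:limop}(e) is literally $C_+$ under the indexing convention of \eqref{eq:A}.
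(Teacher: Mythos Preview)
Your argument follows essentially the same route as the paper's own proof: Fredholmness from Proposition~\ref{prop:i-vi}, constancy of the index via Lemma~\ref{lem:limop}(d) and~(e) applied to a pseudoergodic $A_+$, the range of $\kappa$ from the Toeplitz members $T_+(u,v,w)$ and~\eqref{rq:index}, and the classification of $(\alpha,\beta)$ from the Coburn-type result. Your second paragraph is simply a more explicit unpacking of what the paper means by ``apply Lemma~\ref{lem:limop}~e) to an $A_+\in\PE_+(U,V,W)$''.

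There is, however, one citation error that matters. In your final paragraph you invoke Corollary~\ref{corB} to conclude $\alpha(B_+)\cdot\beta(B_+)=0$. But Corollary~\ref{corB} is stated only for $B_+\in\PE_+(U,V,W)$, whereas the corollary you are proving concerns \emph{every} $B_+\in M_+(U,V,W)$. The result you need is Theorem~\ref{thmA}, which gives the Coburn conclusion for all of $M_+(U,V,W)$ under the hypothesis of compatibility. This is exactly what the paper cites. With that correction your proof is complete and matches the paper's.
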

\begin{proof}
If the triple $(U,V,W)$ is compatible then every $B_+\in M_+(U,V,W)$ is Fredholm, by Proposition \ref{prop:i-vi}. To see that all $B_+\in M_+(U,V,W)$ have the same index, apply Lemma \ref{lem:limop} e) to an $A_+\in\PE_+(U,V,W)$. Because $M_+(U,V,W)$ includes the Toeplitz operators $T_+(u,v,w)$ with $(u,v,w)\in (U,V,W)$, it follows from \eqref{rq:index} that $\kappa(U,V,W)\in\{-1,0,1\}$. The remaining claims follow from Theorem \ref{thmA}.
\end{proof}

\begin{corollary} \label{cor:specB+}
An operator $B_+\in\PE_+(U,V,W)$ is invertible iff all operators $C_+\in M_+(U,V,W)$ are invertible. In other words, for all $B_{+}\in\PE_{+}(U,V,W)$,
\[
\spec B_{+}\ =\ \bigcup_{C_{+}\in M_{+}(U,V,W)} \spec C_{+},
\]
this set denoted $\S_+=\S_+(U,V,W)$ in \eqref{eq:S+}.
\end{corollary}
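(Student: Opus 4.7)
The plan is to observe that essentially all the work has already been done by Theorem \ref{thmA}, Corollary \ref{corC}, and Proposition \ref{prop:i-vi}; what remains is to package the biconditional and then transfer it to the spectrum.

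First I would dispatch the easy direction: if every $C_+\in M_+(U,V,W)$ is invertible then $B_+$ itself is invertible, simply because $\PE_+(U,V,W)\subset M_+(U,V,W)$. For the converse, assume $B_+\in\PE_+(U,V,W)$ is invertible. Then $B_+$ is in particular Fredholm, so property (v) of Proposition \ref{prop:i-vi} holds; hence $(U,V,W)$ is compatible. By Corollary \ref{corC}, every $C_+\in M_+(U,V,W)$ is Fredholm with one and the same index $\kappa(U,V,W)\in\{-1,0,1\}$; since this common index is $\ind B_+=0$, Corollary \ref{corC} forces $\alpha(C_+)=\beta(C_+)=0$, i.e.\ $C_+$ is invertible. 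This proves the biconditional stated in the corollary.

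To deduce the spectral identity, I would apply the biconditional to $B_+-\lambda I$ for each $\lambda\in\C$. Observe that $B_+-\lambda I\in \PE_+(U,V-\lambda,W)$ (the diagonal set $V$ is merely translated by $-\lambda$) and that
\[
M_+(U,V-\lambda,W)\ =\ \{C_+-\lambda I\,:\,C_+\in M_+(U,V,W)\}.
\]
Hence $\lambda\in\spec B_+$ iff $B_+-\lambda I$ is not invertible iff, by the just-proved biconditional, some $C_+-\lambda I$ with $C_+\in M_+(U,V,W)$ fails to be invertible, iff $\lambda\in\bigcup_{C_+\in M_+(U,V,W)}\spec C_+$. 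This yields the displayed equality and shows at the same time that the right-hand side does not depend on the choice of $B_+\in\PE_+(U,V,W)$, so it legitimately deserves the name $\S_+(U,V,W)$.

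There is no serious obstacle: the only point requiring a moment's care is that the translated operator $B_+-\lambda I$ remains pseudoergodic over the triple $(U,V-\lambda,W)$, so that Proposition \ref{prop:i-vi} and Corollary \ref{corC} can indeed be invoked at every $\lambda$. Once that is noted, the argument is a two-line consequence of the new Coburn-type theorem combined with the $p$-independence of invertibility for band operators already recorded in Section \ref{sec:intro}.
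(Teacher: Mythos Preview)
Your argument is correct and follows essentially the same route as the paper: invoke Corollary~\ref{corC} to pass from invertibility of one pseudoergodic $B_+$ to compatibility with $\kappa(U,V,W)=0$, hence to invertibility of every $C_+\in M_+(U,V,W)$, and then shift by $\lambda$ to obtain the spectral identity. You are simply more explicit than the paper about the trivial direction and about why $B_+-\lambda I_+\in\PE_+(U,V-\lambda,W)$.
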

\begin{proof}
By Corollary \ref{corC} if one operator $B_+\in\PE_+(U,V,W)$ is invertible then $(U,V,W)$ is compatible, $\kappa(U,V,W)=0$, and all operators in $M_+(U,V,W)$ are invertible. The formula for the spectrum follows by considering $B_+-\lambda I_+$ instead of $B_+$.
\end{proof}

\noindent As an extension to this corollary we will see in Theorem \ref{th:FSM1} that, if $B_+\in\PE_+(U,V,W)$ is invertible, then
\begin{equation} \label{eq:EPplusinv}
\sup_{C_+  \in M_+(U,V,W)} \|C_+^{-1}\| < \infty.
\end{equation}

Before we explore further what the above results mean for spectra, we note some consequences of Corollary \ref{corC} that are essentially captured in \cite[Theorem 2.4]{LindnerRoch2010}. If $(U,V,W)$ is compatible then Corollary \ref{corC} tells us that every $B_+\in M_+(U,V,W)$ has the same index $\kappa(U,V,W)$. In particular $\kappa = \ind T_+(u,v,w)$, for every $(u,v,w)\in (U,V,W)$, this index given by \eqref{rq:index}. It follows that, if $(U,V,W)$ is compatible, then this index must have the same value for all $(u,v,w)\in (U,V,W)$, so that either $0\in \Eout(u,v,w)$ for all $(u,v,w)\in (U,V,W)$, in which case $\kappa=0$, or $0\in \Ein(u,v,w)$ and $|w|-|u|$ has the same sign for all $(u,v,w)\in (U,V,W)$, in which case $\kappa = -\wind(E(u,v,w),0)= \sign(|w|-|u|)$. Thus, if $A_+\in \PE_+(U,V,W)$ is Fredholm but not invertible, either
\begin{equation} \label{eq:ind2}
w_*\ :=\ \min_{w\in W} |w|\ >\ u^*\ \mbox{ (when $\kappa=1$) }\; \mbox{ or }\; u_*\ :=\ \min_{u\in U} |u|\ >\ w^*\ \mbox{ (when $\kappa=-1$).}
\end{equation}
In the first case 0 is circumnavigated clockwise by all ellipses $E(u,v,w)$, in the second the circumnavigation is counter-clockwise.

For $\lambda\in\C$, put $V-\lambda:=\{v-\lambda:v\in V\}$ and note that $B_+-\lambda I_+\in M_+(U,V-\lambda,W)$ iff $B_+\in M_+(U,V,W)$. Similarly, $B_+-\lambda I_+\in\PE_+(U,V-\lambda,W)$ iff $B_+\in\PE_+(U,V,W)$. We split the complex plane $\C$ into four pairwise disjoint parts. To this end, fix an arbitrary $B_+\in\PE_+(U,V,W)$. The first part of the plane is our set $\S=\S(U,V,W)$ from \eqref{eq:S1},
\begin{eqnarray*}
\S &=& \{\lambda\in\C\ :\ B_+-\lambda I_+\ \textrm{is not Fredholm} \}\\
&=& \spess B_+\ =\ \bigcup_{C_+\in M_+(U,V,W)} \spess C_+\\
&=& \{\lambda\in\C\ :\ (U,V-\lambda,W) \textrm{ is not compatible} \}.
\end{eqnarray*}
The rest of the complex plane now splits into the following three parts: for $k=-1,0,1$, let
\[
\S_k\ :=\ \{\lambda\in\C\ :\ \ind(B_+-\lambda I_+)=k\}\
=\ \{\lambda\in\C\ :\ \kappa(U,V-\lambda,W)=k\}.
\]
So we have a partition (i.e.~a splitting into pairwise disjoint sets) of $\C$:
\begin{equation} \label{eq:splitS}
\C\ =\ \underbrace{\S\ \cup\ \S_{-1}\ \cup\ \S_1}_{\S_+}\ \cup\ \S_0,
\end{equation}
where the equality
\begin{equation} \label{eq:S+2}
\S_+\ =\ \S\,\cup\,\S_{-1}\,\cup\,\S_1\ =\ \C\,\setminus\,\S_0
\end{equation}
holds by Corollary \ref{corC}. So the difference between $\S_+=\spec B_+$ from \eqref{eq:S+} and $\S=\spess B_+$ from \eqref{eq:S1} is precisely $\S_{-1}\cup\S_1$.

The computation of these four parts of the plane, $\S$, $\S_{-1}$, $\S_1$ and $\S_0$, is of course far from trivial (otherwise spectral theory of random Jacobi operators would be easy) but we will compare this partition of $\C$ with another partition of $\C$ that is closely related and, in contrast, easy to compute. To do this, let $\E$ denote the set of all ellipses $E(u,v,w)$ with $u\in U$, $v\in V$ and $w\in W$, and put

\noindent
\begin{center}
\includegraphics[width=\textwidth]{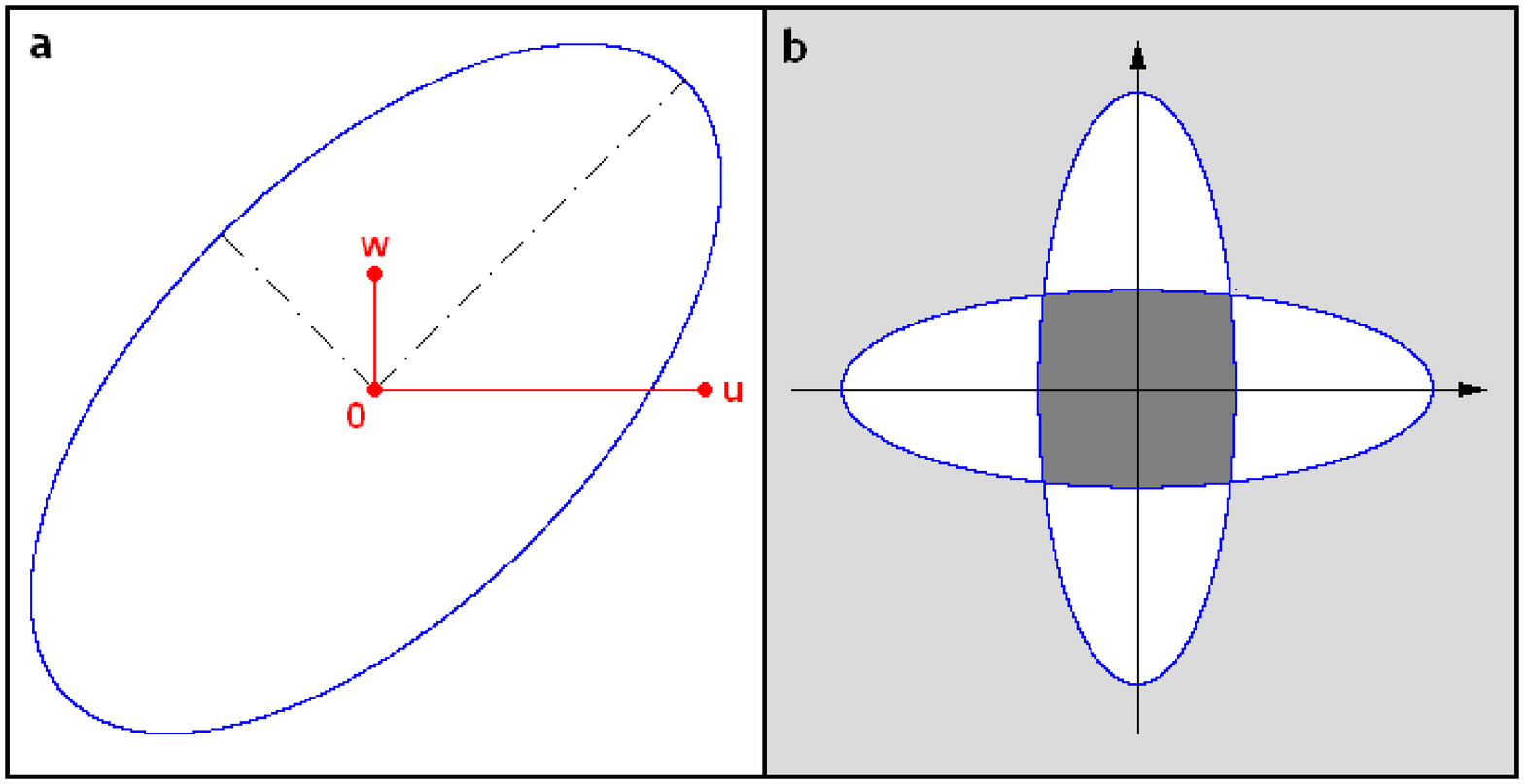}
\end{center}
~\\[-17mm]  
\begin{figure}[h]
\caption{\footnotesize a) The ellipse $E(u,v,w)$ with general values $u,v,w\in\C$ is derived from the zero-centered ellipse $E(u,w):=E(u,0,w)$ after translation by $v$. The ellipse $E(u,w)$ is centered at the origin and has orthogonal half-axes (the dotted lines) of length $\big||u|\pm|w|\big|$, respectively, where the major axis bisects the angle between $u$ and $w$ at the origin. Depicted here is the ellipse $E(u,w)$ for the particular values $u=3$ and $w=\ri$.\newline
b) We see the splitting of $\C$ into the four parts $E$, $E_{-1}$, $E_0$ and $E_1$ for
$U=\{-1,1\}$, $V=\{0\}$ and $W=\{2\}$: the dark gray area is $E_1$, the light gray area is $E_0$ and the rest (the white area plus the ellipse boundaries) is $E$. In this example, $E_{-1}$ is empty. Note that both ellipses, $E(-1,2)$ and $E(1,2)$, are oriented clockwise.
} \label{fig:ellipse}
\end{figure}

\begin{equation}\label{eqs:E}
\left.
\begin{aligned}
E_0 &\ :=\  \{\lambda\in\C\ :\ \lambda \textrm{ is outside of all ellipses in }\E\},\\
\Eins &\ :=\  \{\lambda\in\C\ :\ \lambda \textrm{ is inside all ellipses in }\E\},\\
E_1&\ :=\  \{\lambda\in\Eins\ :\ \lambda \textrm{ is circumnavigated clockwise by all ellipses in }\E\},\\
E_{-1} &\ :=\  \{\lambda\in\Eins\ :\ \lambda \textrm{ is circumnavigated counter-clockwise by all ellipses in }\E\},\\
E &\ :=\  \C\setminus(E_0\cup E_1\cup E_{-1}),\\
\Eall &\ :=\  \C\setminus E_0\ =\ \bigcup_{u\in U, v\in V, w\in W} \conv E(u,v,w).
\end{aligned}
~ \qquad ~ \right\}
\end{equation}
\noindent Obviously, at most one of the sets $E_{1}$ and $E_{-1}$ is nonempty. The set $E$ consists of the points that lie on one of the ellipses, or they are inside some but outside other ellipses, or they are inside all ellipses but circumnavigated clockwise by some and counter-clockwise by others. We have
\begin{equation} \label{eq:splitE}
\C\ =\ E\ \cup\ E_{-1}\ \cup\ E_1\ \cup\ E_0,
\end{equation}
in analogy to \eqref{eq:splitS}. But while the ingredients of \eqref{eq:splitS} are in general notoriously difficult to compute, those of \eqref{eq:splitE} are easily drawn. Before we relate \eqref{eq:splitS} to \eqref{eq:splitE}, it is perhaps time for an example.

\begin{example} \label{ex:CWDav}
Take $U=\{-1,1\}$, $V=\{0\}$ and $W=\{2\}$. Neither $\S$ nor $\S_+$ is precisely known in this case (but see \cite{CWDavies2011,Hagger:NumRange} for bounds on both). But the ingredients of \eqref{eq:splitE} are easy to write down: draw all ellipses $E(u,v,w)$ with $u\in U$, $v\in V$ and $w\in W$. In this case, there are only $|U|\cdot|V|\cdot|W|=2$ ellipses: $E(-1,0,2)$ and $E(1,0,2)$. The situation is depicted in Figure \ref{fig:ellipse}.b (which is taken from \cite{LindnerRoch2010}).
The dark gray area is our set $\Eins$, the light gray area is $E_0$, and the rest (the white area plus the ellipses themselves) is $E$. Both ellipses are oriented clockwise since $|u|<|w|$ in both cases. So in this example, $E_1=\Eins$ and $E_{-1}$ is empty.
\end{example}

Now let us come to the relation between the partitions \eqref{eq:splitS} and \eqref{eq:splitE}. From the discussion above \eqref{eq:ind2} we see that 
\begin{equation} \label{eq:Sk_in_Ek}
\S_{-1}\subset E_{-1},\qquad \S_{1}\subset E_{1},\qquad \S_{0}\subset E_{0}
\qquad\textrm{and hence}\qquad E\subset\S.
\end{equation}
So we have at least some simple upper bounds on $\S_{-1}$, $\S_1$ and $\S_0$ and a lower bound on $\S$. The upper bound on $\S_0$ is equivalent to the lower bound $\Eall$ on its complement $\S_+$ in \eqref{eq:lowerupperS+}. The lower bound $E$ on $\S$ is actually sharper than the lower bound \eqref{eq:lowerS}. Further, from the discussion leading to \eqref{eq:ind2} we see that
\begin{equation} \label{eq:EE}
E_1\ =\ \left\{\begin{array}{cc}
               E_\cap,\ & \mbox{if } w_*\ >\ u^*, \\
               \varnothing,\ & \mbox{otherwise,}
             \end{array}\right.
  \qquad
E_{-1}\ =\ \left\{\begin{array}{cc}
               E_\cap,\ & \mbox{if } u_*\ >\ w^*, \\
               \varnothing,\ & \mbox{otherwise.}
             \end{array}\right.
\end{equation}

Recall that the difference between $\S_+$ and $\S$ (i.e.~the non-essential spectrum of $B_+$) is $\S_{-1}\cup\S_1$. From \eqref{eq:Sk_in_Ek} we get that
\begin{equation} \label{eq:Spm}
\S_{\pm 1}\ :=\ \S_{-1}\cup\S_1\ \subset\ E_{-1}\cup E_1\ =: E_{\pm 1}\ \subset\ \Eins.
\end{equation}
So the non-essential spectrum of $B_+\in\PE_+(U,V,W)$ is inside all ellipses in $\E$.
By \eqref{eq:lowerupperS+}, we have
\begin{equation} \label{eq:Eins}
\Eins\ \subset\ \Eall\ \subset\ \S_{+}.
\end{equation}
Now let us sum up. From \eqref{eq:S+2}, \eqref{eq:Spm} and \eqref{eq:Eins} we get
\[
\S_+\ =\ \S\cup\S_{\pm 1}\ \subset\ \S\cup E_{\pm 1}\ \subset\ \S\cup\Eins\ \subset\ \S\cup\Eall\ \subset\  \S\cup\S_+\ =\ \S_+,
\]
so that all inclusions are in fact equalities and we have proven the following:

\begin{theorem} \label{thm:gap}
It holds that
\begin{equation} \label{eq:S+-S}
\S_+\ =\ \S\,\cup\,\S_{\pm 1}\ =\ \S\,\cup\,E_{\pm 1}\ =\ \S\,\cup\,\Eins\ =\ \S\,\cup\,\Eall
\end{equation}
with $\S_{\pm 1}$ and $E_{\pm 1}$ from \eqref{eq:Spm} and $\Eins$ and $\Eall$ from \eqref{eqs:E}. Moreover, exactly one of the following cases applies:
\begin{description}
  \item[i)] $w_*\le u^*$ and $u_* \le w^*$, in which case $\S_{\pm 1} = E_{\pm 1} = \varnothing$ and $E_\cup \subset \S_+ = \S$;
  \item[ii)]  $w_*> u^*$, in which case $\S_{-1} = E_{-1} = \varnothing$ and $\S_1 \subset E_{1} = E_\cap$;
  \item[iii)] $u_* > w^*$, in which case $\S_{1} = E_{1} = \varnothing$ and $\S_{-1} \subset E_{-1} = E_\cap$.
\end{description}
\end{theorem}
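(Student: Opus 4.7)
The plan is to recognize that most of the work is already assembled in the two paragraphs immediately preceding the theorem, and the statement essentially records the collapse of a chain of inclusions together with a three-way case analysis driven by \eqref{eq:EE}. So the proof is more bookkeeping than new content.

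First I would rewrite the chain of containments that appears just above the theorem as a single block. By \eqref{eq:S+2} we have $\S_+=\S\cup\S_{\pm 1}$; by \eqref{eq:Spm}, $\S_{\pm 1}\subset E_{\pm 1}\subset\Eins$; and by \eqref{eq:Eins}, $\Eins\subset\Eall\subset\S_+$. Concatenating (and adjoining $\S$ to each inclusion on the left) yields
\[
\S_+\ =\ \S\cup\S_{\pm 1}\ \subset\ \S\cup E_{\pm 1}\ \subset\ \S\cup\Eins\ \subset\ \S\cup\Eall\ \subset\ \S\cup\S_+\ =\ \S_+.
\]
Because the two ends agree, every set in the chain equals $\S_+$, which is exactly \eqref{eq:S+-S}. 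This is the entire content of the first half; no analysis beyond the already-derived \eqref{eq:Spm} and \eqref{eq:Eins} is needed.

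Next I would handle the case split using \eqref{eq:EE}. The three listed cases partition the parameter space: the inequalities $w_*>u^*$ and $u_*>w^*$ cannot both hold, since $u_*\le u^*$ and $w_*\le w^*$ would then give $u_*\le u^* < w_*\le w^* < u_*$, a contradiction. In case (i), both inequalities fail, so \eqref{eq:EE} forces $E_1=E_{-1}=\varnothing$; then $E_{\pm 1}=\varnothing$ and \eqref{eq:Spm} gives $\S_{\pm 1}=\varnothing$, so the first half of the theorem collapses to $\S_+=\S$, whence also $\Eall\subset\S_+=\S$. In case (ii), \eqref{eq:EE} gives $E_{-1}=\varnothing$ and $E_1=\Eins$; \eqref{eq:Spm} then supplies $\S_{-1}\subset E_{-1}=\varnothing$ and $\S_1\subset E_1=\Eins$. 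Case (iii) is symmetric, with the roles of $u$ and $w$ swapped.

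The only non-bookkeeping step is the short arithmetic check that cases (ii) and (iii) are mutually exclusive, and even this is essentially a two-line inequality manipulation. Everything else is a direct consequence of the identities and inclusions already established in the run-up to the theorem, together with the orientation description of the ellipses $E(u,v,w)$ recorded just after \eqref{eq:defE} and summarised in \eqref{rq:index}. I would therefore present the proof as a short paragraph collapsing the inclusion chain, followed by three one-line verifications for the cases.
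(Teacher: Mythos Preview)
Your proposal is correct and mirrors the paper's own argument exactly: the chain of inclusions collapsing to equalities is precisely the display immediately preceding the theorem, and the three-case split is the direct unpacking of \eqref{eq:EE} combined with \eqref{eq:Spm}, which the paper leaves implicit. Your explicit check that cases (ii) and (iii) are mutually exclusive is a welcome addition but not new content.
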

Of course $\Eins$ (and certainly $\Eall$) is in general larger than the actual gap $\S_+\setminus\S=\S_{\pm 1}$ between the spectrum and essential spectrum, but equality \eqref{eq:S+-S} is still an attractive new bit of the picture: we do not know, very explicitly, what the sets $\S$ from \eqref{eq:S1} and $\S_+$ from \eqref{eq:S+} are, but we do now know explicitly what we have to add on to $\S$ to get $\S_+$.
It has also recently been shown \cite{Hagger:NumRange} 
 that $\convn(\Eall)=\convn(E)$ is, surprisingly, both the closure of the numerical range and the convex hull of the spectrum for each operator in $\PE(U,V,W)\cup\PE_+(U,V,W)$, and is hence a (very explicit) upper bound on both $\S$ and $\S_{+}$. Combining this result with Theorem \ref{thm:gap} and \eqref{eq:Sk_in_Ek} we have that
\begin{equation} \label{eq:hagger}
E\ \subset\ \S\ \subset\ \S_{+}\ =\ \S \cup \Eall\ \subset\ \convn(\Eall)\ =\ \convn(E).
\end{equation}
\begin{example} \label{ex:bidiag}
{\bf ~--~Bidiagonal case. } In \cite{LiBiDiag} the bidiagonal case was studied, that means $U=\{0\}$ or $W=\{0\}$. Let us say $U=\{0\}$. Then all our ellipses
\[
E(u,v,w)\ =\ E(0,v,w)\ =\ v+w\T,\qquad u\in U,\ v\in V,\ w\in W
\]
are circles with clockwise orientation. So we have that
\[
\Eall\ =\ \bigcup_{v\in V} (v+w^*\ovD)
\qquad\textrm{and}\qquad
\Eins\ =\ \bigcap_{v\in V} (v+w_*\D)
\]
with $w^*=\max_{w\in W}|w|$ and $w_*=\min_{w\in W}|w|$.
In \cite{LiBiDiag} it was shown that
\[
\S\ =\ \Eall\setminus\Eins
\qquad\textrm{and hence, by \eqref{eq:S+-S}, we have}\qquad
\S_+\ =\ \Eall.
\]
So in this case, the partitions \eqref{eq:splitS} and \eqref{eq:splitE} coincide:
\[
\S=E=\Eall\setminus\Eins,\quad \S_{-1}=E_{-1}=\varnothing,\quad
\S_1=E_1=\Eins,\quad \S_0=E_0=\C\setminus\Eall.
\]
For $\lambda\in\S_+\setminus\S=\S_1=\Eins$, each $B_+\in\PE_+(U,V-\lambda,W)$ is Fredholm with index $1$; precisely, $\alpha(B_+)=1$ and $\beta(B_+)=0$.
\end{example}

\begin{example} \label{ex:possi} Take $U=\{1\}$, $V=\{0\}$, and $W=\{0,2\}$. In this case there are two ellipses: $E(1,0) = \T$, with a counter-clockwise orientation, and
\begin{equation} \label{eq:elli12}
E(1,2)\ =\ \{t_r+i t_i:t_r, t_i\in \R \mbox{ and } (t_r/3)^2 + t_i^2 = 1\},
\end{equation}
with a clockwise orientation. Case i) in Theorem \ref{thm:gap} applies, so that $E_\cup \subset \S_+=\S$. Further, $E(1,0)\subset \conv E(1,2)$, so that $E_\cup = \conv E_\cup$ and, by \eqref{eq:hagger}, $\S_+=\S = E_\cup = \conv E(1,2)$.
\end{example}

\section{The Finite Section Method} \label{sec:FSM}
From Theorems 2.8 and 2.9 of \cite{LindnerRoch2010}, or see Lemma \ref{lem:FSMappl} and the comments immediately below that lemma (or \cite{SeidelPhD}), we know that the full FSM (with $l_n=1$ and $r_n=n$) applies to a semi-infinite Jacobi matrix $A_+$ iff the operator itself and the set of associated semi-infinite matrices $C_-$ in \eqref{eq:ass_matrix} are invertible. To each matrix $C_{-}$ corresponds (see Remark \ref{rem:reflect}) a reflected matrix $C_+=RC_-^\top R\in M_+(U,V,W)$. Further, the set of all these reflected matrices $C_+$ is all of $M_{+}(U,V,W)$ iff $A_+$ is pseudoergodic, as a simple consequence of Lemma \ref{lem:limop}d). Similarly, the full FSM (with $l_n=-n$ and $r_n=n$) applies to a bi-infinite Jacobi matrix $A$ iff the operator itself and both sets of associated semi-infinite matrices $B_+$ and $C_-$ in \eqref{eq:ass_matrix} are invertible and, again by Lemma \ref{lem:limop}d), the union of the set of all matrices $B_+$ and $C_+=RC_-^\top R$ is the whole of $M_{+}(U,V,W)$ iff $A$ is pseudoergodic.  As a simple consequence of these facts, and the results in sections \ref{sec:intro} and \ref{sec:spectra}, we obtain:

\begin{theorem} \label{th:FSM1}
The following are equivalent:
\begin{itemize}
\item[(a)] the full FSM applies to one operator in $\PE(U,V,W)$;\\[-4ex]
\item[(b)] the full FSM applies to one operator in $\PE_{+}(U,V,W)$;\\[-4ex]
\item[(c)] all operators in $M_{+}(U,V,W)$ are invertible;\\[-4ex]
\item[(d)] all operators in $M_{+}(U,V,W)$ are invertible, and the inverses are uniformly bounded;\\[-4ex]
\item[(e)] one operator in $\PE_{+}(U,V,W)$ is invertible;\\[-4ex]
\item[(f)] $0\not\in \S_{+}$;\\[-4ex]
\item[(g)] one operator in $\PE_{+}(U,V,W)$ is Fredholm with index $0$;\\[-4ex]
\item[(h)] all operators in $M_{+}(U,V,W)$ are Fredholm with index $0$;\\[-4ex]
\item[(i)] $(U,V,W)$ is compatible and $\kappa(U,V,W)=0$;\\[-4ex]
\item[(j)] the full FSM applies to all operators in $M(U,V,W)$;\\[-4ex]
\item[(k)] the full FSM applies to all operators in $M_{+}(U,V,W)$.
\end{itemize}
\end{theorem}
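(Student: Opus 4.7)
The plan is to arrange the eleven conditions into a cycle of implications by combining the results already established in Section \ref{sec:spectra} (Proposition \ref{prop:i-vi}, Theorem \ref{thmA}, and Corollaries \ref{corC} and \ref{cor:specB+}) with the limit-operator characterisation of FSM applicability (Lemma \ref{lem:FSMappl}), together with the fact that a pseudoergodic operator has as its set of limit operators exactly $M(U,V,W)$ (Lemma \ref{lem:limop}(d)).

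First I would handle the spectral/Fredholm block (c), (e), (f), (g), (h), (i), which makes no direct reference to the FSM. The equivalences (i) $\Leftrightarrow$ (h) $\Leftrightarrow$ (c) follow at once from Corollary \ref{corC} and Theorem \ref{thmA}: if $(U,V,W)$ is compatible with $\kappa=0$ then every $B_+\in M_+(U,V,W)$ is Fredholm of index zero, and Theorem \ref{thmA} promotes this to invertibility; the converses are immediate. The implications (e) $\Leftrightarrow$ (g) $\Leftrightarrow$ (i) follow from Proposition \ref{prop:i-vi}(v) and Corollary \ref{corC}, while (e) $\Leftrightarrow$ (f) is Corollary \ref{cor:specB+} applied at $\lambda=0$.

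Second, for the FSM conditions (a), (b), (j), (k), I would apply Lemma \ref{lem:FSMappl}(iii). For $A\in\PE(U,V,W)$ with the full FSM the associated upper-left matrices $B_+$ run, by Lemma \ref{lem:limop}(d) together with the observation that every element of $M_+(U,V,W)$ extends to an element of $M(U,V,W)$, through all of $M_+(U,V,W)$; and the reflections $RC_-^\top R$ of the associated $C_-$'s, in the sense of Remark \ref{rem:reflect}, belong instead to $M_+(W,V,U)$, since transposition swaps the sub- and super-diagonals. For $A_+\in\PE_+(U,V,W)$ with the full FSM only the $C_-$'s are nontrivial and their reflections likewise exhaust $M_+(W,V,U)$. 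The key observation is that $E(w,v,u)$ is the ellipse $E(u,v,w)$ traversed in the opposite sense, so by \eqref{rq:index} we have $\kappa(W,V,U) = -\kappa(U,V,W)$, and therefore $(U,V,W)$ is compatible with $\kappa=0$ if and only if $(W,V,U)$ is. Hence (i) makes every operator in $M_+(U,V,W)\cup M_+(W,V,U)$ invertible, and Lemma \ref{lem:FSMappl}(iii) yields (a), (b), (j) and (k). The reverse implications are easy: each of (a), (b) yields invertibility of the underlying pseudoergodic operator via Lemma \ref{lem:FSMappl}(iii), hence (e); and (j) $\Rightarrow$ (a), (k) $\Rightarrow$ (b) by specialisation to a pseudoergodic element of the respective set.

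Finally for (d): once (c) $\Rightarrow$ (a) is in hand, Lemma \ref{lem:FSMappl}(iv) applied to a chosen $A\in\PE(U,V,W)$ provides uniform boundedness of the inverses of the $B_+$'s associated with $A$, which by the argument above exhaust $M_+(U,V,W)$; the reverse (d) $\Rightarrow$ (c) is trivial. The main technical obstacle I anticipate is the reflection bookkeeping in Remark \ref{rem:reflect}: one must check carefully that the $C_-$'s reflect to $M_+(W,V,U)$ rather than $M_+(U,V,W)$, and that the orientation reversal of the ellipse in \eqref{rq:index} indeed gives $\kappa(W,V,U) = -\kappa(U,V,W)$, so that the vanishing of one index is equivalent to the vanishing of the other; once this symmetry is in place, Corollary \ref{corC} ensures that the reflected and unreflected halves of the FSM-data are simultaneously handled by a single condition on $(U,V,W)$.
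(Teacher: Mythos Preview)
Your overall strategy is correct and close to the paper's: group (c), (e), (f), (g), (h), (i) via Proposition~\ref{prop:i-vi} and Corollaries~\ref{corC}--\ref{cor:specB+}; handle (a), (b), (j), (k) via Lemma~\ref{lem:FSMappl}(iii) and the pseudoergodicity characterisation Lemma~\ref{lem:limop}(d); and get (d) from Lemma~\ref{lem:FSMappl}(iv). The paper proceeds in essentially this way, citing the paragraph before the theorem (and \cite[Thms.~2.8--2.9]{LindnerRoch2010}) for the FSM equivalences.

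There is, however, a bookkeeping error in your treatment of the reflections. You assert that $RC_-^\top R\in M_+(W,V,U)$ because ``transposition swaps the sub- and super-diagonals''. Transposition does swap them, but so does conjugation by the flip $R$: for any matrix $M$, $(RMR)_{ij}=M_{-i,-j}$, so the $(i,i-1)$ entry of $RMR$ equals the $(-i,-i+1)$ entry of $M$, which lies on the superdiagonal of $M$. Hence $R(\cdot)^\top R$ swaps twice, i.e.\ \emph{preserves} the sub/super-diagonal structure, and $RC_-^\top R\in M_+(U,V,W)$, not $M_+(W,V,U)$. This is exactly what Remark~\ref{rem:reflect} and the paragraph immediately preceding the theorem say.

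This error does not actually break your argument, because you then patch it with the (correct) observation that $\kappa(W,V,U)=-\kappa(U,V,W)$, so that compatibility with $\kappa=0$ is symmetric in $(U,V,W)\leftrightarrow(W,V,U)$. But this detour is unnecessary: once you note that the reflected $C_-$'s already lie in $M_+(U,V,W)$, condition (c) alone handles both the $B_+$'s and the $C_-$'s, and the ``main technical obstacle'' you anticipate evaporates. This is precisely the paper's route. Your argument for (d) via the $B_+$'s alone is fine (indeed slightly leaner than the paper's, which uses both halves), since for $A\in\PE(U,V,W)$ with the full FSM the $B_+$'s already exhaust $M_+(U,V,W)$.
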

\begin{proof}
The equivalence of (c), (g), (h) and (i) follows from Proposition \ref{prop:i-vi} and Corollary \ref{corC}. The equivalence of (a)-(c) is then clear from  the remarks preceding this theorem and Lemma \ref{lem:FSMappl}, as is the equivalence of (c) with (j) and (k) (or see Theorems 2.8 and 2.9 in \cite{LindnerRoch2010}). That (c), (e) and (f) are equivalent is Corollary \ref{cor:specB+}. To see the equivalence of (c) and (d) suppose that (c) holds, in which case also (j) holds. Then, by Lemma \ref{lem:FSMappl}, all the operators $B_+$ and $C_-$ in \eqref{eq:ass_matrix} arising from the full FSM applied to any $A\in M(U,V,W)$ are invertible and uniformly bounded. Thus, for all $p\in [1,\infty]$,
\begin{equation} \nonumber
\sup \|B_+^{-1}\|_p < \infty \;\mbox{ and }\; \sup\|C_-^{-1}\|_p < \infty,
\end{equation}
so that also, where $C_+=RC_-^\top R$ and since $\ell^q(-\N)$ is the dual space of $\ell^p(-\N)$ if $p^{-1}+q^{-1}=1$, $\sup\|C_+^{-1}\|_q<\infty$. Since, see the remarks before the lemma, the collection of all operators $B_+$ and $C_+$ is the whole of $M_+(U,V,W)$ in the case that $A\in \PE(U,V,W)$, we see that we have shown (d), precisely that
\begin{equation} \label{eq:PEinvp}
\cN_{+,p} := \sup_{B_+\in M(U,V,W)}\|B_+^{-1}\|_p < \infty,
\end{equation}
for $1\leq p\leq \infty$.
\end{proof}

\begin{remark} \label{rem:FSM}
{\bf a) } So for pseudoergodic semi-infinite matrices $A_+$ (the so-called ``stochastic Toeplitz operators'' from \cite{TrefContEmb}), we get that the full FSM applies as soon as the operator is invertible. This is of course the best possible result since invertibility of $A_+$ is a minimal requirement (it is necessary) for the applicability of the FSM. For (classical) banded Toeplitz operators $T_+$, the same is true as was first shown in \cite{GohbergFeldman}. So in a sense, we also rediscover that classical result for tridiagonal Toeplitz matrices (by applying our result to the case when $U$, $V$ and $W$ are singletons).

{\bf b) } There is a similar coincidence between the FSM for pseudoergodic bi-infinite matrices (called ``stochastic Laurent operators'' in \cite{TrefContEmb}) and for usual Laurent operators: in both cases, the FSM applies iff the operator is invertible and the corresponding semi-infinite principal submatrix (the Toeplitz part) has index zero. If the latter index is not zero, there is something that can be done. It is called ``index cancellation'', and we will get to this in short course.

{\bf c) } Recall from Lemma \ref{lem:fullFSM} (also see \cite[Thms.~2.8 \& 2.9]{LindnerRoch2010}) that the FSM applies with arbitrary monotonic cut-off sequences $l_{n}$ and $r_{n}$ if the full FSM is applicable.
\end{remark}

Recall that applicability of the FSM means that the truncated (finite) systems \eqref{eq:Anxn=b} are uniquely solvable for all sufficiently large $n$, say $n\ge n_{0}$, with their solutions $x_{n}$ approximating the unique solution $x$ of \eqref{eq:Ax=b}. The limit operator techniques behind Lemma \ref{lem:FSMappl} don't reveal much about that -- practically very relevant -- number $n_{0}$; but in our pseudoergodic setting we can prove that actually
\begin{center}
$n_{0}=1$ holds in $(a)$, $(b)$, $(j)$ and $(k)$ of Theorem \ref{th:FSM1}:
\end{center}

\begin{theorem} \label{th:FSM2}
From the equivalent conditions (a)--(k) of Theorem \ref{th:FSM1} it follows that all finite Jacobi matrices over $U$, $V$ and $W$,
that means all $F\in\Mfin(U,V,W)$, are invertible.
\end{theorem}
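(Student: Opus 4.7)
The plan is a proof by contradiction using the glueing technique that drives Theorem \ref{thmA}. Suppose, toward a contradiction, that some $F \in M_n(U,V,W)$ is not invertible. Then both $Fx=0$ and $F^\top y=0$ admit nonzero solutions $x,y \in \C^n$, and the goal is to manufacture a non-invertible $B \in M(U,V,W)$, contradicting Proposition \ref{prop:i-vi}(iv) (part of our standing hypothesis via, e.g., condition (c) of Theorem \ref{th:FSM1}).

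To build $B$, I place a copy of $F$ at rows and columns $\{1,\ldots,n\}$ and a reflected-transpose copy $B_{-k,-l} := F_{l,k}$ at rows and columns $\{-n,\ldots,-1\}$; the middle row/column (at index $0$) and the two bi-infinite tails $|i| \geq n+1$ are then filled with arbitrary admissible entries from $U$, $V$, $W$ so that $B \in M(U,V,W)$. As a companion, I define $z_i := bx_i$ on $\{1,\ldots,n\}$, $z_i := ay_{-i}$ on $\{-n,\ldots,-1\}$, $z_0:=0$, with scalars $a,b \in \C$ to be chosen, and (as a first attempt) $z_i:=0$ outside these blocks. A direct computation, mirroring the bookkeeping of Theorem \ref{thmA} and using $Fx=0$ and $F^\top y=0$ inside the two blocks, shows $(Bz)_i=0$ automatically throughout $\{-n,\ldots,n\}\setminus\{0\}$, so $Bz=0$ reduces to three boundary ``seam'' equations: a central one $B_{0,-1}ay_1 + B_{0,1}bx_1 = 0$ at $i=0$, together with $B_{n+1,n}bx_n = 0$ and $B_{-n-1,-n}ay_n = 0$ at $i=\pm(n+1)$.

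The central seam is a single linear equation in the two unknowns $(a,b)$ and always admits a nonzero solution. The easy case is $0 \in U$: choose the sub-diagonal glueing entries $B_{1,0}=B_{n+1,n}=B_{-1,0}=B_{-n-1,-n}=0$; this not only kills the outer seams but makes $B$ block upper-triangular with $F$ as one of its diagonal blocks, so $B$ is non-invertible as soon as $F$ is, and we are done. The case $0 \in W$ is handled symmetrically by zeroing out the super-diagonal glueing entries, which renders $B$ block lower-triangular. The main obstacle, and the hard case, is $0 \notin U \cup W$: then the tridiagonal recursion (with all super- and sub-diagonals nonzero) forces $x_1,x_n,y_1,y_n \neq 0$, so the finite-support ansatz cannot simultaneously satisfy the outer seams. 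My plan to handle this is to abandon finite support and extend $z$ past $\pm n$ along the tails of $B$, exploiting that every semi-infinite $M_+ \in M_+(U,V,W)$ (and, via Remark \ref{rem:reflect} together with $\kappa(W,V,U)=-\kappa(U,V,W)=0$, every $M_- \in M_-(U,V,W)$) is invertible by Corollary \ref{corC}; this determines each tail of $z$ uniquely in $\ell^\infty$ from its single matching datum at the inner edge. The tails of $B$ are then tuned within $M_+(U,V,W)$ so that the resulting values $z_{\pm(n+1)}$ are consistent with $(Bz)_{\pm n}=0$, and combining this with the central seam produces a nonzero bounded $z$ with $Bz=0$, contradicting the compatibility of $(U,V,W)$ and proving $F$ was invertible after all.
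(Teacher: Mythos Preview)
Your approach has a genuine gap in the hard case $0\notin U\cup W$. You aim to produce a nonzero $z\in\ell^\infty(\Z)$ with $Bz=0$ for some $B\in M(U,V,W)$, but the standing hypothesis already forces every such $B$ to be invertible (Proposition~\ref{prop:i-vi}(iv)), so no such $z$ exists; the point is that your construction must \emph{fail}, not that it yields a contradiction. Concretely: with $b\neq 0$ (which you need for $z\neq 0$) and $x_n\neq 0$, the condition $(Bz)_n=0$ forces $z_{n+1}=0$ since $w_n\neq 0$. Then $(Bz)_{n+1}=0$ gives $u_{n+1}bx_n+w_{n+1}z_{n+2}=0$, so $z_{n+2}\neq 0$. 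But now the equations $(Bz)_i=0$ for $i\ge n+2$, together with $z_{n+1}=0$, say exactly that $(z_{n+2},z_{n+3},\dots)$ lies in the kernel of the semi-infinite tail operator on $[n+2,\infty)$, which is (a shift of) an element of $M_+(U,V,W)$ and hence invertible by hypothesis. So the tail must vanish, contradicting $z_{n+2}\neq 0$. No ``tuning'' of the tail entries of $B$ can rescue this; in the singleton case $U=\{u\}$, $V=\{v\}$, $W=\{w\}$ there is no freedom to tune at all. (Your easy-case sketch also has index slips --- $B_{-1,0}$ is a super-diagonal entry in $W$, not $U$ --- and the block-triangular argument for bi-infinite $B$ needs more care than stated, though it can be repaired by working with a semi-infinite $B_+$ instead.)

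The paper's proof avoids this trap by targeting condition~(c) rather than Proposition~\ref{prop:i-vi}(iv): it tiles \emph{infinitely many} copies of $F$ along the diagonal of $B$, with scalar weights $r_k$ on the $k$th copy of $x$. The seam equations then become the single recursion $r_{k-1}ux_n+r_kwx_1=0$, solved by the geometric sequence $r_k=\rho^k$ with $\rho=-ux_n/(wx_1)$. Depending on whether $|\rho|\le 1$ or $|\rho|\ge 1$, one half of the resulting $\tilde x$ is bounded and lies in the kernel of $B_+$ or of $B_-$; either way one obtains a non-invertible operator in $M_+(U,V,W)$ (after reflection in the second case), contradicting~(c). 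The key difference is that the paper produces a kernel element of a \emph{semi}-infinite operator, which is not ruled out by the hypothesis, rather than of a bi-infinite one, which is.
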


Before we come to the proof, let us note that this theorem implies
\begin{equation} \label{eq:specJ}
\spec J\ \subset\ \S_+
\end{equation}
for every Jacobi matrix $J$ (finite or infinite) over $U$, $V$ and $W$, i.e., for every $J\in\Mfin(U,V,W)\cup M_+(U,V,W)\cup M(U,V,W)$.
Equality holds if $J\in\PE_+(U,V,W)$.

We now prepare the proof of Theorem \ref{th:FSM2}. It combines a technique from the proof of \cite[Theorem 4.1]{CCL2} with
elements of the proof of Theorem \ref{thmA}. Let $n\in\N$. Given an $n\times n$ matrix $F\in\Mfin(U,V,W)$
and arbitrary elements $u\in U$, $v\in V$ and $w\in W$, we make the following construction. Put
\begin{equation}\label{eq:matB}
B\ :=\
\left(\begin{array}{ccccccc}
\ddots&
\begin{array}{ccc}w&& \end{array}\\
\cline{2-2}
\begin{array}{c}u\\ \\ \\ \end{array}&
\multicolumn{1}{|c|}{F}&
\begin{array}{c} \\ \\w \end{array}\\
\cline{2-2}
&\begin{array}{ccr}&&u\end{array}
& \boxed{v} &
\begin{array}{lcc}w&&\end{array}\\
\cline{4-4}
& & \begin{array}{c}u\\ \\ \\ \end{array}&
\multicolumn{1}{|c|}{{F}}&
\begin{array}{c} \\ \\w \end{array}\\
\cline{4-4}
&&&\begin{array}{ccr}&&u\end{array}
& v & \begin{array}{lcc}w&&\end{array}\\
\cline{6-6}
& && & \begin{array}{c}u\\ \\ \\ \end{array}&
\multicolumn{1}{|c|}{F}&
\begin{array}{c} \\ \\w \end{array}\\
\cline{6-6}
&&&&&
\begin{array}{ccc}&&u\end{array}&
\ddots
\end{array}\right)\ \in \ M(U,V,W),
\end{equation}
where $\boxed{v}$ marks the entry of $B$ at position $(0,0)$. We denote the semi-infinite blocks above and below $\boxed{v}$ by $B_{-}$ and $B_{+}$, respectively, so that
\begin{equation}\label{eq:matB+-}
B\ =\
\left(\begin{array}{ccc}
\multicolumn{1}{c|}{B_-}&
\begin{array}{c} \\ \\w \end{array}\\
\cline{1-1}
\begin{array}{ccr}&&u\end{array}
& \boxed{v} & \begin{array}{lcc}w&&\end{array}\\
\cline{3-3}
 & \begin{array}{c}u\\ \\ \\ \end{array}&
\multicolumn{1}{|c}{{B_+}}
\end{array}\right).
\end{equation}
Precisely, with $B=(b_{ij})_{i,j\in\Z}$, we put $B_+:=(b_{ij})_{i,j\in\N}\in M_+(U,V,W)$ and $B_-:=(b_{ij})_{i,j\in-\N}$.

Now, for a vector $x\in \C^{n}$ and a complex sequence $(r_{k})_{k\in \Z}$, put
\begin{equation}\label{eq:vecx}
\widetilde x\ :=\ \left(\begin{array}{c}
\vdots\\
\multicolumn{1}{|c|}{~}\\
\hline
0\\
\hline
\multicolumn{1}{|c|}{~}\\
\multicolumn{1}{|c|}{r_{-1}x}\\
\multicolumn{1}{|c|}{~}\\
\hline\\[-2.4ex]
\boxed{0}\\[0.4ex]
\hline
\multicolumn{1}{|c|}{~}\\
\multicolumn{1}{|c|}{{r_0x}}\\
\multicolumn{1}{|c|}{~}\\
\hline
0\\
\hline
\multicolumn{1}{|c|}{~}\\
\multicolumn{1}{|c|}{r_1x}\\
\multicolumn{1}{|c|}{~}\\
\hline
0\\
\hline
\multicolumn{1}{|c|}{~}\\
\vdots
\end{array}
\right),
\qquad\textrm{leading to}\qquad
B\widetilde x\ =\
\left(\begin{array}{c}
\vdots\\
\multicolumn{1}{|c|}{~}\\
\hline
z_{-1}\\
\hline
\multicolumn{1}{|c|}{~}\\
\multicolumn{1}{|c|}{r_{-1}Fx}\\
\multicolumn{1}{|c|}{~}\\
\hline\\[-2.4ex]
\boxed{z_{0}}\\[0.8ex]
\hline
\multicolumn{1}{|c|}{~}\\
\multicolumn{1}{|c|}{{r_0Fx}}\\
\multicolumn{1}{|c|}{~}\\
\hline
z_{1}\\
\hline
\multicolumn{1}{|c|}{~}\\
\multicolumn{1}{|c|}{r_1Fx}\\
\multicolumn{1}{|c|}{~}\\
\hline
z_{2}\\
\hline
\multicolumn{1}{|c|}{~}\\
\vdots
\end{array}
\right),
\end{equation}
where $\boxed{0}$ and $\boxed{z_{0}}$ mark the respective $0$ positions and
\begin{equation} \label{eq:zn}
z_{k}\ =\ r_{k-1}ux_{n}+r_{k}wx_{1},\qquad k\in\Z.
\end{equation}

\begin{lemma}\label{lem:choose-rk}
For arbitrary $x_1,x_n,u,w\in\C$, there exists a sequence $(r_k)_{k\in\Z}$ in $\C$ so that, for the sequence $(z_k)_{k\in\Z}$ from \eqref{eq:zn}, either
\begin{equation}\label{eq:r+}
(r_0,r_1,...)\in \ell^\infty(\N)\setminus\{0\}
\qquad \textrm{and}\qquad z_1=z_2=...=0
\end{equation}
or
\begin{equation}\label{eq:r-}
(...,r_{-2},r_{-1})\in \ell^\infty(-\N)\setminus\{0\}
\qquad\textrm{and}\qquad z_{-1}=z_{-2}=...=0.
\end{equation}
\end{lemma}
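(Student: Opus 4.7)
My plan is to recognise that the conditions $z_1=z_2=\cdots=0$ and $z_{-1}=z_{-2}=\cdots=0$ are nothing but a one-sided two-term linear recurrence in the unknowns $r_k$: using \eqref{eq:zn}, both amount to
\[
r_{k-1}(ux_n)\,+\,r_k(wx_1)\ =\ 0
\]
holding for $k\ge 1$ or for $k\le -1$, respectively. I would therefore build $(r_k)$ explicitly by iterating this recurrence in the forward or backward direction and setting $r_k$ to be anything (say $0$) outside the relevant half-line.

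First I would dispose of the degenerate cases. If $ux_n=0$, the recurrence for $k\ge 1$ reduces to $r_k\,wx_1=0$; choosing $r_0:=1$ and $r_k:=0$ for all $k\ne 0$ gives a bounded nonzero sequence satisfying \eqref{eq:r+}. Symmetrically, if $wx_1=0$, the recurrence for $k\le -1$ reduces to $r_{k-1}\,ux_n=0$; choosing $r_{-1}:=1$ and $r_k:=0$ for $k\ne -1$ yields \eqref{eq:r-}. (If both $ux_n$ and $wx_1$ vanish then any nonzero bounded sequence works and we may simply pick \eqref{eq:r+}.)

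In the main case, both $ux_n$ and $wx_1$ are nonzero, and I set $\alpha:=-ux_n/(wx_1)\in\C\setminus\{0\}$, so that the recurrence becomes simply $r_k=\alpha\,r_{k-1}$. Its general solution on any half-line is a geometric progression, and the only question is in which direction this progression stays bounded. If $|\alpha|\le 1$, I would take $r_k:=\alpha^{k}$ for $k\ge 0$ (and, say, $r_k:=0$ for $k<0$); this is nonzero (at $k=0$), bounded by $1$ on $\N$, and obeys the recurrence for all $k\ge 1$, so \eqref{eq:r+} holds. If $|\alpha|>1$, I would instead run the recurrence backward: set $r_{-1}:=1$ and $r_{k}:=\alpha^{\,k+1}$ for $k\le -1$ (and $r_k:=0$ for $k\ge 0$); since $|\alpha^{-1}|<1$, the sequence $(r_k)_{k\le -1}$ is bounded by $1$ and nonzero, and obeys $r_k=\alpha r_{k-1}$ for $k\le -1$, giving \eqref{eq:r-}.

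There is no real obstacle here beyond the bookkeeping of indices and recurrence direction; the only thing to watch is that the forward half-line is kept bounded exactly when $|\alpha|\le 1$ and the backward one exactly when $|\alpha|\ge 1$, so that the dichotomy $\{|\alpha|\le 1,\,|\alpha|>1\}$ covers all possibilities and the two cases of the lemma together exhaust them.
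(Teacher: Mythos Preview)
Your proof is correct and follows essentially the same approach as the paper: the paper also disposes of the degenerate cases $ux_n=0$ and $wx_1=0$ exactly as you do, and in the main case sets $\rho:=-ux_n/(wx_1)$ and $r_k:=\rho^k$, choosing \eqref{eq:r+} if $|\rho|\le 1$ and \eqref{eq:r-} if $|\rho|\ge 1$. The only cosmetic difference is that the paper defines $r_k=\rho^k$ on all of $\Z$ rather than zeroing out the unused half-line as you do.
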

\begin{proof}
The choice of $(r_k)$ differs, depending on whether (and which) parameters are zero. If none of $x_1,x_n,u,w$ is zero, put
$\rho := -ux_n/(wx_1)$ and $r_k := \rho^k$ for $k\in \Z$, so that $z_k=0$ for $k\in \Z$ and \eqref{eq:r+} holds if $|\rho|\le 1$, \eqref{eq:r-} if $|\rho|\ge 1$. If $ux_n=0$, put $r_0:=1$ and $r_k:=0$ for $k\neq 0$, in which case \eqref{eq:r+} holds.  If $wx_1=0$, put $r_{-1}:=1$ and $r_k:=0$ for $k\neq -1$, in which case  \eqref{eq:r-} holds.
\end{proof}

Now we have all that we need:

\begin{proofof}{Theorem \ref{th:FSM2}}
Let $F\in M_n(U,V,W)$ for some $n\in\N$.
 Suppose $F$ is singular. Then there is an $x\in\C^n\setminus\{0\}$ with $Fx=0$. Choose a sequence $(r_k)$ in $\C$ so that \eqref{eq:r+} or \eqref{eq:r-} holds. Then, in the notations of \eqref{eq:matB+-} and \eqref{eq:vecx}, either $B_+\tilde x_+=0$ with $\tilde x_+:=\tilde x|_\N\in\ell^\infty(\N)$ or $B_-\tilde x_-=0$ with $\tilde x_-:=\tilde x|_{-\N}\in\ell^\infty(-\N)$. In either case, there is a non-invertible operator (either $B_+$ or the reflection $RB_-^\top R$ of $B_-$) in $M_+(U,V,W)$, which contradicts our assumption. 
So $F$ is invertible.
\end{proofof}

\medskip

So, in the case $\kappa(U,V,W)=0$, the full FSM (and hence the FSM with any monotonic cut-off sequences $l_n$ and $r_n$) applies to every $A\in M(U,V,W)$ and to every $A_+\in M_+(U,V,W)$, where unique solvability of the finite systems \eqref{eq:Anxn=b} already starts at $n=1$.

In the remaining cases (assuming, of course, that $(U,V,W)$ is compatible, i.e.~$\kappa(U,V,W)$ is defined), it holds that
\[
\kappa\ :=\ \kappa(U,V,W)\ =\ \pm 1,
\]
and the FSM cannot apply -- no matter how the cut-offs are placed (e.g. \cite[Prop 5.2]{Li:FSMsubs}). In the semi-infinite case the operator $A_+$ is not even invertible since its index equals $\kappa\ne 0$. In the bi-infinite case, the way out is to move the system $Ax=b$ up or down by one row (i.e.~to renumber the infinitely many equations in \eqref{eq:Ax=b} by increasing or decreasing the row number $i$ by $1$). So instead of $Ax=b$, the equivalent system $S^\kappa Ax=S^\kappa b$ is solved, where $S$ is the bi-infinite forward shift introduced earlier. Passing from $A$ to $\tilde A:=S^{\kappa}A$ preserves invertibility and corrects the index of the semi-infinite principal submatrices from $\kappa$ to $0$. Indeed,
\begin{equation} \label{eq:cancel}
\ind \tilde A_{+}\ =\ \ind(S^{\kappa}A)_{+}\ =\ \ind (S^{\kappa})_{+}+\ind A_{+}\ =\ -\kappa+\kappa\ =\ 0.
\end{equation}
This shifting process is called {\sl index cancellation}; for Laurent operators $A$ it goes back to \cite{GohbergFeldman,HeinigHellinger}, for much more general operators, see e.g. \cite{Li:FSMsubs,LindnerRoch2010,LiStrang:Perm,SeidelPhD}.

We claim that, after index cancellation,  the full FSM applies to every $A\in M(U,V,W)$ also in the cases $\kappa=\pm 1$. This can be seen as follows:

{\bf Case 1: } $\kappa=+1$.\quad Then all ellipses $E(u,v,w)$ with $u\in U$, $v\in V$ and $w\in W$ are oriented clockwise, so that
\begin{equation} \label{eq:w*>0}
|w|\ >\ |u|\ \ge\ 0,\qquad u\in U,\ w\in W.
\end{equation}
Now pass from $Ax=b$, i.e.
\begin{equation} \label{eq:noshift}
\left(\begin{array}{ccccccc} \ddots&\ddots\\[-2mm]
\ddots&v_{-2}&w_{-2}\\
&u_{-1}&v_{-1}&w_{-1}\\
\hline &&u_{0}&v_0&w_0\\\hline
&&&u_{1}&v_1&w_1\\[-1mm]
&&&&u_2&v_2&\ddots\\
&&&&&\ddots&\ddots
\end{array}\right)
\left(\begin{array}{c} \vdots\\x(-2)\\x(-1)\\x(0)\\x(1)\\x(2)\\
\vdots\end{array}\right) \ =\ \left(\begin{array}{c} \vdots\\b(-2)\\b(-1)\\\hline b(0)\\ \hline b(1)\\b(2)\\
\vdots\end{array}\right)
\end{equation}
to the equivalent system $SAx=Sb$, i.e.
\begin{equation} \label{eq:shift+1}
\left(\begin{array}{ccccccc} \ddots\\[-2mm]
\ddots&w_{-3}\\[-2mm]
\ddots&v_{-2}&w_{-2}\\
&u_{-1}&v_{-1}&w_{-1}\\
\hline &&u_{0}&v_0&w_0\\ \hline
&&&u_{1}&v_1&w_1\\
&&&&\ddots&\ddots&\ddots
\end{array}\right)
\left(\begin{array}{c} \vdots\\x(-2)\\x(-1)\\x(0)\\x(1)\\x(2)\\
\vdots\end{array}\right) \ =\ \left(\begin{array}{c} \vdots\\b(-3)\\b(-2)\\b(-1)\\ \hline b(0)\\ \hline b(1)\\
\vdots\end{array}\right).
\end{equation}
To see that the full FSM applies to the shifted system \eqref{eq:shift+1}, it is sufficient, by Theorem 2.8 of \cite{LindnerRoch2010}, to show that all semi-infinite matrices of the form
\begin{equation} \label{eq:B++1}
B_+\ =\ \left(\begin{array}{ccccc}
\tilde w_{1}\\
\tilde v_{2}&\tilde w_{2}\\
\tilde u_{3}&\tilde v_{3}&\tilde w_{3}\\
&\tilde u_{4}&\tilde v_4&\tilde w_4\\
&&\ddots&\ddots&\ddots
\end{array}\right)
\end{equation}
with $\tilde u_i\in U$, $\tilde v_i\in V$ and $\tilde w_i\in W$ are invertible on $\ell^p(\N)$. So let $B_{+}$ be one of them. We start with injectivity: from $\tilde w_i\ne 0$ for all $i$, by \eqref{eq:w*>0}, we get successively $x(1)=0,\ x(2)=0, \dots$ as the only solution of $B_{+}x_{+}=0$. By \cite{RaRoRoe}, $\ind B_{+}=\ind (S^{\kappa}A)_{+}$. But the latter is zero by \eqref{eq:cancel}, so that $B_{+}$ is also surjective and hence invertible.

{\bf Case 2: } $\kappa=-1$.\quad Now all ellipses $E(u,v,w)$ with $u\in U$, $v\in V$ and $w\in W$ are oriented counter-clockwise, so that
\begin{equation} \label{eq:u*>0}
|u|\ >\ |w|\ \ge\ 0,\qquad u\in U,\ w\in W.
\end{equation}
Now pass from $Ax=b$, i.e.~\eqref{eq:noshift}, to $S^{-1}Ax=S^{-1}b$, i.e.
\begin{equation} \label{eq:shift-1}
\left(\begin{array}{ccccccc}
\ddots&\ddots&\ddots\\
&u_{-1}&v_{-1}&w_{-1}\\
\hline &&u_{0}&v_0&w_0\\ \hline
&&&u_{1}&v_1&w_1\\[-1.5mm]
&&&&u_2&v_2&\ddots\\[-1.5mm]
&&&&&u_3&\ddots\\[-1.5mm]
&&&&&&\ddots
\end{array}\right)
\left(\begin{array}{c} \vdots\\x(-2)\\x(-1)\\x(0)\\x(1)\\x(2)\\
\vdots\end{array}\right) \ =\ \left(\begin{array}{c} \vdots\\b(-1)\\ \hline b(0)\\ \hline b(1)\\b(2)\\
b(3)\\\vdots\end{array}\right)
\end{equation}
and check in a similar way, now using \eqref{eq:u*>0} and \eqref{eq:cancel}, that all semi-infinite matrices of the form
\[
B_+\ =\ \left(\begin{array}{ccccc}
\tilde u_{1}&\tilde v_{1}&\tilde w_{1}\\
&\tilde u_{2}&\tilde v_2&\tilde w_2\\[-2mm]
&&\tilde u_{3}&\tilde v_3&\ddots\\[-2mm]
&&&\tilde u_{4}&\ddots\\[-2mm]
&&&&\ddots
\end{array}\right)
\]
with $\tilde u_i\in U$, $\tilde v_i\in V$ and $\tilde w_i\in W$ are invertible on $\ell^p(\N)$.

So also in case $\kappa(U,V,W)=\pm 1$, the FSM applies, with arbitrary cut-off sequences $l_n$ and $r_n$, to every $A\in M(U,V,W)$ -- but only after index cancellation. From \eqref{eq:w*>0} and \eqref{eq:u*>0} it is clear that every finite principal submatrix of the shifted (and therefore triangular) matrices in \eqref{eq:shift+1} and \eqref{eq:shift-1}, respectively, is invertible. So, again, the finite systems \eqref{eq:Anxn=b} are uniquely solvable for all $n$ (and not just for all sufficiently large $n$).

\section{Norms, Norms of Inverses, and Pseudospectra}
 In this section we bound and compare the norms and the norms of inverses of bi-infinite, semi-infinite and finite Jacobi matrices over $(U,V,W)$. The results are then expressed in terms of pseudospectra.

\subsection{Bi-infinite matrices}
We start with the simplest case: bi-infinite matrices in $M(U,V,W)$. Not surprisingly, a prominent role is played by those in $\PE(U,V,W)$.

\begin{proposition} \label{prop:PE}
Let $A\in\PE(U,V,W)$ and $B\in M(U,V,W)$ be arbitrary.
Then $\|A\|\ge\|B\|$. If moreover $(U,V,W)$ is compatible, then also $\|A^{-1}\|\ge\|B^{-1}\|$.
\end{proposition}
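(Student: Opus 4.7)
The plan is to derive both inequalities directly from the limit-operator machinery collected in Lemma \ref{lem:limop}, using the characterization (part d) of pseudoergodicity in terms of limit operators: since $A\in\PE(U,V,W)$, we have $\opsp(A)=M(U,V,W)$, so every $B\in M(U,V,W)$ is a limit operator of $A$.

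For the first inequality, since $B\in M(U,V,W)=\opsp(A)$, part a) of Lemma \ref{lem:limop} immediately yields $\|B\|\le\|A\|$. This step is completely routine.

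For the second inequality, assume $(U,V,W)$ is compatible. Then, by Proposition \ref{prop:i-vi}, $A$ is invertible (in particular Fredholm), and $A^{-1}$ is itself a regularizer of $A$ in the sense of part b) of Lemma \ref{lem:limop} (both $AA^{-1}-I$ and $A^{-1}A-I$ vanish, and the invertibility of the band-dominated operator $A$ ensures $A^{-1}\in\BDO(\ell^p(\Z))$). Since $B$ is a limit operator of $A$ corresponding to some shift sequence $(h_n)$, part b) tells us that $B$ is invertible and that $B^{-1}$ is the limit operator of the regularizer $A^{-1}$ along the same sequence $(h_n)$. Applying part a) once more, now to $A^{-1}$ and its limit operator $B^{-1}$, gives $\|B^{-1}\|\le\|A^{-1}\|$, as required.

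There is no real obstacle here: the only thing to verify carefully is that $A^{-1}$ indeed qualifies as a regularizer in the sense of Lemma \ref{lem:limop}b) so that the ``inverse of a limit operator is a limit operator of the regularizer'' statement applies, and that $A^{-1}\in\BDO(\ell^p(\Z))$ so that taking limit operators of $A^{-1}$ makes sense — both are standard consequences of $A\in\BDO(\ell^p(\Z))$ being invertible, as noted parenthetically in the statement of Lemma \ref{lem:limop}b). Everything else is a direct two-line application of parts a), b) and d) of that lemma.
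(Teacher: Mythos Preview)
Your proof is correct and follows essentially the same route as the paper's own argument: invoke Lemma~\ref{lem:limop}~d) to get $B\in\opsp(A)$, then~b) to obtain $B^{-1}\in\opsp(A^{-1})$, and conclude both norm inequalities via~a). The extra care you take to note that $A^{-1}$ itself serves as a regularizer and lies in $\BDO(\ell^p(\Z))$ is a reasonable clarification, but otherwise this is exactly the paper's proof.
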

\begin{proof}
First note that $A$ and $B$ are invertible by Proposition \ref{prop:i-vi} if $(U,V,W)$ is compatible.
Now use Lemma \ref{lem:limop} d), b) and a) -- in this order. $A\in\PE(U,V,W)$ implies $B\in\opsp(A)$ by d), which then implies $B^{-1}\in\opsp(A^{-1})$ by b), so that $\|B\|\le\|A\|$ and $\|B^{-1}\|\le\|A^{-1}\|$ follow by a).
\end{proof}

\begin{corollary} \label{cor:PE}
For all $A\in\PE(U,V,W)$, we have
\[
\|A\| \ =\ \max\limits_{B\in M(U,V,W)}\|B\| \ =:\ \M.
\]
If moreover $(U,V,W)$ is compatible then
\begin{equation}\label{eq:||PE||}
\|A^{-1}\|\ =\ \max\limits_{B\in M(U,V,W)}\|B^{-1}\| \ =:\ \cN 
\end{equation}
\end{corollary}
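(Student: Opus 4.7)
The plan is to derive this immediately from Proposition \ref{prop:PE} together with the trivial observation that $\PE(U,V,W)\subset M(U,V,W)$.

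First I would treat the norm equality. Fix any $A\in\PE(U,V,W)$. Proposition \ref{prop:PE} gives $\|A\|\ge\|B\|$ for every $B\in M(U,V,W)$, so $\|A\|$ is an upper bound for the set $\{\|B\|:B\in M(U,V,W)\}$. On the other hand, since $A$ itself belongs to $M(U,V,W)$, the value $\|A\|$ lies in that set, so the supremum is attained at $A$ and equals $\|A\|$. This gives the first claimed identity and shows, in particular, that $\M$ is finite and well-defined.

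For the second claim, assume $(U,V,W)$ is compatible. Then by Proposition \ref{prop:i-vi} every $B\in M(U,V,W)$ is invertible, so the quantity $\|B^{-1}\|$ makes sense for each such $B$; in particular $A$ itself is invertible. Applying the second half of Proposition \ref{prop:PE} gives $\|A^{-1}\|\ge\|B^{-1}\|$ for all $B\in M(U,V,W)$, and again $\|A^{-1}\|$ belongs to the set $\{\|B^{-1}\|:B\in M(U,V,W)\}$ since $A\in M(U,V,W)$. Hence the supremum is attained at $A$ and equals $\|A^{-1}\|$, yielding \eqref{eq:||PE||}.

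There is essentially no obstacle here; the real work has already been done in Proposition \ref{prop:PE}, which in turn rests on the limit operator machinery (parts (a), (b), (d) of Lemma \ref{lem:limop}). The only point worth flagging is that the suprema in the statement are indeed maxima, and this is automatic because $A\in\PE(U,V,W)$ is itself a member of $M(U,V,W)$, so the extremal value is realised by $A$ (and, of course, by every other pseudoergodic element of $M(U,V,W)$).
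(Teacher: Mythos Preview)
Your proof is correct and matches the paper's approach exactly: the paper states this corollary without proof, as an immediate consequence of Proposition~\ref{prop:PE} together with the trivial inclusion $\PE(U,V,W)\subset M(U,V,W)$, which is precisely the argument you give.
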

If we have a particular $p\in[1,\infty]$ in mind, or want to emphasise the dependence on $p$, we will write $\M_p$ and $\cN_p$ for the expressions $\M$ and $\cN$ defined in Corollary \ref{cor:PE} (cf.~\eqref{eq:PEinvp}).

The following proposition is a simple consequence of the observations that, if $A,B\in \BDO(\ell^p(\Z))$ for all $1\le p\le \infty$, and $A=RB^\top R$, where $R$ is the reflection operator defined in Remark \ref{rem:reflect}, then: (i) $\|A\|_p = \|B^\top \|_p = \|B\|_q$, for $1\le p \le \infty$, if $p^{-1}+q^{-1}=1$; (ii) $A\in M(U,V,W)$ iff $B\in M(U,V,W)$; (iii) $A$ is invertible iff $B$ is invertible, and if they are both invertible then $\|A^{-1}\|_p = \|(B^\top )^{-1}\|_p = \|B^{-1}\|_q$.

\begin{proposition} \label{prop:PEpq}
For $p,q\in[1,\infty]$, with
$p^{-1}+q^{-1}=1$, we have
\[
\M_p\ =\ \M_q\ \qquad\textrm{and}\qquad \cN_p = \cN_q. 
\]
\end{proposition}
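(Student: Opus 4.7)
The strategy is simply to exploit the involution $\Phi: B \mapsto RB^\top R$ on bi-infinite matrices, which was already outlined in the paragraph preceding the proposition. The point is that $\Phi$ preserves the class $M(U,V,W)$ while swapping the roles of $p$ and $q$ in the operator norm.

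First I would verify the three bullet points (i)--(iii) listed just before the proposition. For (i), $R$ is an isometry of every $\ell^p(\Z)$, so $\|RB^\top R\|_p = \|B^\top\|_p$; then, since the transpose of a band-dominated operator on $\ell^p$ is its Banach-space adjoint acting on the dual space $\ell^q$, we get $\|B^\top\|_p = \|B\|_q$. For (ii), I would check the diagonals entry by entry: transposition of $B \in M(U,V,W)$ swaps sub- and super-diagonals, producing a matrix in $M(W,V,U)$; the subsequent reflection by $R$, sending $(i,j) \mapsto (-i,-j)$, swaps them back, so $\Phi(B) \in M(U,V,W)$. For (iii), invertibility is obviously preserved by $\Phi$ (whose inverse on $\BDO$ is itself), and the same chain of identities applied to $\Phi(B)^{-1} = R(B^{-1})^\top R$ gives $\|\Phi(B)^{-1}\|_p = \|B^{-1}\|_q$.

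Granted (i)--(iii), the proposition follows by a one-line argument. For every $B \in M(U,V,W)$, $\Phi(B) \in M(U,V,W)$ satisfies $\|\Phi(B)\|_p = \|B\|_q$, whence
\[
\M_p \;=\; \max_{C \in M(U,V,W)} \|C\|_p \;\ge\; \|\Phi(B)\|_p \;=\; \|B\|_q,
\]
and taking the maximum over $B$ gives $\M_p \ge \M_q$. Swapping $p$ and $q$ yields the reverse inequality, so $\M_p = \M_q$. Assuming additionally that $(U,V,W)$ is compatible, every $B \in M(U,V,W)$ is invertible by Proposition~\ref{prop:i-vi}, so the same argument applied to inverses and using $\|\Phi(B)^{-1}\|_p = \|B^{-1}\|_q$ gives $\cN_p = \cN_q$.

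The only real subtlety is the identity $\|B^\top\|_p = \|B\|_q$ for banded operators with bounded entries; here I would be a little careful to note that elements of $M(U,V,W)$ are bounded on every $\ell^r$, so that $B^\top$, viewed as the formal transposed matrix, genuinely coincides with the Banach adjoint of $B$ acting between $\ell^p$ and its dual $\ell^q$. Everything else is bookkeeping.
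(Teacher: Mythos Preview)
Your proof is correct and follows exactly the route sketched in the paper: the paper's own proof is the one-line version of what you wrote, invoking the same three observations (i)--(iii) about $B\mapsto RB^\top R$ to conclude $\M_p = \sup_{B\in M(U,V,W)}\|B^\top\|_p = \M_q$ and similarly for $\cN$. Your write-up is more explicit about why $RB^\top R$ (rather than just $B^\top$) lands back in $M(U,V,W)$, but the underlying argument is identical.
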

\begin{proof}
It is clear from the above observations that $\M_p = \sup_{B\in M(U,V,W)} \|B^\top \|_p = \M_q$. Similarly $\cN_p = \cN_q$.
\end{proof}

\subsection{The relationship between semi- and bi-infinite matrices}\label{sec:M+vsM}
The semi-infinite case $M_+(U,V,W)$ is a bit more involved. We start with a simple observation:

\begin{proposition}\label{prop:PE+}
{\bf a)} For $A_+\in\PE_+(U,V,W)$ it holds that
\[
\|A_+\|\  =\ \max_{B_+\in M_+(U,V,W)}\|B_+\|\ =\ \M.
\]
\indent{\bf b)} If $A_+\in\PE_+(U,V,W)$ is invertible, i.e. $(U,V,W)$ is compatible and $\kappa(U,V,W)=0$, then $\|A_+^{-1}\|\ge \cN$.
\end{proposition}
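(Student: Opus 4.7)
The strategy is to mirror the proof of Proposition \ref{prop:PE} — which handled the bi-infinite case purely through the limit-operator lemma — and adapt it to $A_+$ using Lemma \ref{lem:limop} d), which identifies $\opsp(A_+)=M(U,V,W)$.

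For part a), I will establish $\|A_+\|=\M$ via two inequalities. The lower bound $\|A_+\|\ge\M$ follows from Lemma \ref{lem:limop} parts d) and a): every $B\in M(U,V,W)$ is a limit operator of $A_+$, hence $\|B\|\le\|A_+\|$, and taking the supremum via Corollary \ref{cor:PE} gives $\M\le\|A_+\|$. For the reverse bound I would extend $A_+$ to some bi-infinite $\tilde A\in M(U,V,W)$ by prescribing arbitrary allowed entries in the missing positions of the $(-\N)\times(-\N)$ block (together with the border entries). Viewing $\ell^p(\N)\hookrightarrow\ell^p(\Z)$ by zero-extension $x\mapsto\tilde x$, tridiagonality of $\tilde A$ combined with $\tilde x|_{-\N\cup\{0\}}=0$ yields $(\tilde A\tilde x)|_\N = A_+x$, so that $\|A_+x\|_p\le\|\tilde A\tilde x\|_p\le\|\tilde A\|\,\|x\|_p\le\M\,\|x\|_p$ and hence $\|A_+\|\le\M$. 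The same extension argument applied to any $B_+\in M_+(U,V,W)$ yields $\|B_+\|\le\M=\|A_+\|$, so the supremum over $M_+(U,V,W)$ is in fact a maximum, attained at $A_+$ itself.

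For part b), I will again follow the proof of Proposition \ref{prop:PE}, this time with Lemma \ref{lem:limop} b) in the central role. Since $A_+$ is invertible by hypothesis, $A_+^{-1}$ trivially qualifies as a regularizer of $A_+$ (because $A_+A_+^{-1}-I_+=0$ is compact). For any $B\in M(U,V,W)=\opsp(A_+)$, Lemma \ref{lem:limop} b) then gives that $B$ is invertible and that $B^{-1}$ is a limit operator of $A_+^{-1}$. Applying Lemma \ref{lem:limop} a) to $A_+^{-1}$ yields $\|B^{-1}\|\le\|A_+^{-1}\|$, and maximising over $B\in M(U,V,W)$ via Corollary \ref{cor:PE} delivers $\cN\le\|A_+^{-1}\|$, as required.

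The one point requiring care is ensuring $A_+^{-1}\in\BDO(\ell^p(\N))$ so that the limit-operator calculus can be brought to bear on it; this is precisely the content of the parenthetical remark in Lemma \ref{lem:limop} b). Apart from that, the argument is a nearly verbatim transcription of the bi-infinite proof, and there is no substantial obstacle — the work was already done in setting up Lemma \ref{lem:limop}.
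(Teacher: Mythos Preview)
Your proof is correct and follows essentially the same approach as the paper: the compression/extension argument for the upper bound $\|A_+\|\le\M$, and Lemma \ref{lem:limop} d), a), b) for the lower bounds $\|A_+\|\ge\M$ and $\|A_+^{-1}\|\ge\cN$. The only cosmetic difference is that the paper fixes a single bi-infinite extension $A\in\PE(U,V,W)$ of $A_+$ and uses this one operator for both inequalities (noting $\|A\|=\M$ and $\|A^{-1}\|=\cN$ directly from Corollary \ref{cor:PE}), whereas you range over all $B\in M(U,V,W)$ for the limit-operator direction and take an arbitrary extension for the compression direction.
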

\begin{proof}
Let $A\in M(U,V,W)$ be a bi-infinite extension of $A_+\in\PE_+(U,V,W)$. Then $A\in\PE(U,V,W)$, so that $\|A\|=\M$ and $\|A^{-1}\|=\cN$, by \eqref{eq:||PE||}. Now $\|A\|\ge\|A_+\|$ since $A_+$ is a compression of $A$, and $\|A\|\le\|A_+\|$ since $A\in\opsp(A_+)=M(U,V,W)$. If $A_+$ is invertible, this last fact implies that $A^{-1}\in\opsp(A_+^{-1})$, so that $\|A^{-1}\|\le\|A_+^{-1}\|$. Finally, let $B\in M(U,V,W)$ be a bi-infinite extension of a given arbitrary $B_+\in M_+(U,V,W)$. Then $B\in\opsp(A_+)=M(U,V,W)$ and $B_+$ is a compression of $B$, so that $\|B_+\|\le\|B\|\le\|A_+\|$.
\end{proof}

\noindent
A question we have been unable to resolve in general is whether $\|A_+^{-1}\|$ is the same for all $A_+\in\PE_+(U,V,W)$, and whether it is larger than $\cN$. We will see below that the following proposition, which is a partial complement of the bound $\|A_+^{-1}\|\ge\cN$ for $A_+\in \PE_+(U,V,W)$, answers this question at least in the case $p=2$. The arguments to obtain this proposition are a quantitative version of the proof of Theorem \ref{thmA}.
\begin{proposition}\label{prop:M+}
{\bf a) } If $A_+,B_+\in M_+(U,V,W)$ are invertible and $(U,V,W)$ is compatible, then
\begin{equation}\label{eq:M+min}
\min\left(\|A_+^{-1}\|_p,\|B_+^{-1}\|_q\right)\ \le\ \cN_p\ =\ \cN_q
\end{equation}
holds for all $p,q\in[1,\infty]$ with $p^{-1}+q^{-1}=1$.

{\bf b) } Thus, if $(U,V,W)$ is compatible with $\kappa(U,V,W)=0$, so that all operators in $M_+(U,V,W)$ are invertible, then \eqref{eq:M+min} holds for all $A_+,B_+\in M_+(U,V,W)$.

{\bf c) } If $A_+,B_+\in \PE_+(U,V,W)$ are invertible then equality holds in \eqref{eq:M+min}.
\end{proposition}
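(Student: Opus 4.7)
The plan for part (a) is a quantitative refinement of the glueing construction in the proof of Theorem \ref{thmA}. Fix $M < \min(\|A_+^{-1}\|_p, \|B_+^{-1}\|_q)$. By the standard duality $\|B_+^{-1}\|_q = \|(B_+^\top)^{-1}\|_p$ (which extends to $p \in \{1,\infty\}$ for band-dominated operators via the matrix norm formulas for $\|\cdot\|_1$ and $\|\cdot\|_\infty$), pick $x, y \in \ell^p(\N)$ with $\|x\|_p = \|y\|_p = 1$, $\|A_+ x\|_p < 1/M$, and $\|B_+^\top y\|_p < 1/M$. A small perturbation of the form $x \mapsto x + \delta e_1$ (followed by renormalisation) arranges $x_1 \neq 0$ and $y_1 \neq 0$ without spoiling these bounds.

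Next, exactly as in the proof of Theorem \ref{thmA}, build $B = (b_{ij})_{i,j\in\Z} \in M(U,V,W)$ with $b_{ij} = \tilde a_{ij}$ for $i,j \in \N$, $b_{ij} = \tilde b_{-j,-i}$ for $i,j \in -\N$, and $b_{ij} = 0$ for $|i-j| > 1$, with bridging entries chosen to have maximal magnitude $|b_{0,-1}| = u^*$ in $U$ and $|b_{0,1}| = w^*$ in $W$ (the remaining bridging entries $b_{0,0}, b_{\pm 1,0}$ arbitrary in the appropriate sets). Define $z\in\ell^p(\Z)$ by
\[
z_0 = 0, \qquad z_k = b x_k \ \ (k \ge 1), \qquad z_{-k} = a y_k \ \ (k \ge 1),
\]
with $a := -b_{0,1} x_1$ and $b := b_{0,-1} y_1$. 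These choices annihilate $(Bz)_0 = b_{0,-1} a y_1 + b_{0,1} b x_1$, and a short computation entirely analogous to the one in the proof of Theorem \ref{thmA} gives $(Bz)_k = b(A_+ x)_k$ for $k \ge 1$ and $(Bz)_{-k} = a(B_+^\top y)_k$ for $k \ge 1$. Therefore (for $1 \le p < \infty$; $p = \infty$ is analogous)
\[
\|z\|_p^p = |a|^p + |b|^p, \qquad \|Bz\|_p^p = |b|^p\|A_+x\|_p^p + |a|^p\|B_+^\top y\|_p^p < \frac{|a|^p+|b|^p}{M^p},
\]
giving $\|B^{-1}\|_p > M$. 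Since $B \in M(U,V,W)$, Corollary \ref{cor:PE} bounds $\|B^{-1}\|_p \le \cN_p$; letting $M \nearrow \min(\|A_+^{-1}\|_p, \|B_+^{-1}\|_q)$ proves (a). The degenerate cases $u^* = 0$ or $w^* = 0$, which force $b = 0$ or $a = 0$ so that $z$ is supported on a single half, yield the even stronger one-sided bounds $\cN_p \ge \|B_+^{-1}\|_q$ or $\cN_p \ge \|A_+^{-1}\|_p$ respectively.

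Part (b) is immediate from (a) once Corollary \ref{corC} is invoked to ensure that $\kappa(U,V,W) = 0$ makes every $C_+ \in M_+(U,V,W)$ invertible. For part (c), combine (a) with the lower bound $\|A_+^{-1}\|_p \ge \cN_p$ from Proposition \ref{prop:PE+}(b), applied to $A_+$ on $\ell^p$ and to $B_+$ on $\ell^q$ (using $\cN_p = \cN_q$ from Proposition \ref{prop:PEpq}): these furnish the reverse inequality in \eqref{eq:M+min}, forcing equality.

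The main obstacle is the bookkeeping verification that $Bz$ decouples exactly as claimed—that the bridging row $(Bz)_0$ vanishes under the chosen $a, b, b_{0,-1}, b_{0,1}$ and that no cross-terms survive between the positive and negative halves of $z$. Once that localisation is confirmed, everything reduces to the displayed inequality.
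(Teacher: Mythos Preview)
Your argument is correct and follows essentially the same glueing strategy as the paper's proof: place $A_+$ in the lower-right block of a bi-infinite $B\in M(U,V,W)$, place (the reflection of) $B_+$ in the upper-left block, and test $B$ against a glued vector whose zeroth entry is zero and whose scalar weights on the two halves are chosen to annihilate $(Bz)_0$. The paper works with $C_-:=RB_+^\top R$ acting on $\ell^p(-\N)$ rather than with $B_+^\top$ on $\ell^p(\N)$, and it normalises $\|A_+x\|_p=\|C_-y\|_p=1$ rather than $\|x\|_p=\|y\|_p=1$, but these are cosmetic. The one place where the paper is slightly cleaner is in the choice of the scalar weights: rather than forcing $x_1\neq 0$, $y_1\neq 0$ by perturbation and then taking the specific $a=-b_{0,1}x_1$, $b=b_{0,-1}y_1$, the paper simply picks any $(r,s)\neq(0,0)$ solving the single linear equation $u\,s\,y_{-1}+w\,r\,x_1=0$, which always exists and avoids both the perturbation step and the separate handling of the degenerate cases $u^*=0$ or $w^*=0$. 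Your treatment of parts (b) and (c) matches the paper exactly.
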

\begin{proof}
{\bf a) } If $(U,V,W)$ is compatible then all operators in $M(U,V,W)$ are invertible and $\cN_p=\cN_q$ holds by Proposition \ref{prop:PEpq}. Let $A_+,B_+\in M_+(U,V,W)$ be invertible, and let $C_-:=RB_+^\top R$ be the reflection of $B_+$ as discussed in Remark \ref{rem:reflect}. Abbreviate
\[
\|A_+^{-1}\|_p=:a\quad\textrm{and}\quad\|C_-^{-1}\|_p=\|(RB_+^\top R)^{-1}\|_p=\|R(B_+^\top)^{-1} R\|_p=\|(B_+^{-1})^\top\|_p=\|B_+^{-1}\|_q=:b.
\]
Given an arbitrary $\eps>0$, choose $x\in\ell^p(\N)$ and $y\in\ell^p(-\N)$ so that $\|A_+x\|_p=1$, $\|C_-y\|_p=1$ with $\|x\|_p^p>a^p-\eps$ and $\|y\|_p^p>b^p-\eps$ in case $p<\infty$, and $\|x\|_\infty>a-\eps$ and $\|y\|_\infty>b-\eps$ in case $p=\infty$. Now put
\begin{equation} \label{eq:Bdef}
B\ :=\
\left(\begin{array}{ccc}
\multicolumn{1}{c|}{C_-}&
\begin{array}{c} \\ \\w \end{array}\\
\cline{1-1}
\begin{array}{ccr}&&u\end{array}
& \boxed{v} & \begin{array}{lcc}w&&\end{array}\\
\cline{3-3}
 & \begin{array}{c}u\\ \\ \\ \end{array}&
\multicolumn{1}{|c}{{A_+}}
\end{array}\right)\ \in\ M(U,V,W)
\quad\textrm{and}\quad
\widetilde x\ :=\ \left(\begin{array}{c}
\multicolumn{1}{|c|}{\vdots}\\
\multicolumn{1}{|c|}{sy}\\
\multicolumn{1}{|c|}{~}\\
\hline\\[-2.4ex]
\boxed{0}\\[0.4ex]
\hline
\multicolumn{1}{|c|}{~}\\
\multicolumn{1}{|c|}{{rx}}\\
\multicolumn{1}{|c|}{\vdots}
\end{array}
\right)\ \in\ \ell^p(\Z),
\end{equation}
where $u\in U$, $v\in V$ and $w\in W$ are chosen arbitrarily, $\boxed{v}$ marks the entry at $(0,0)$ in $B$, $\boxed{0}$ is at position $0$ in $\widetilde x$, and $r,s\in\C$ are chosen such that $(B\widetilde x)_0=usy_{-1}+wrx_1$ equals zero, while $r,s$ are not both zero. Then $\widetilde x\ne 0$ and $B\widetilde x=(sC_-y,0,rA_+x)^\top\ne 0$. Now, for $p<\infty$, 
\begin{align*}
\|B^{-1}\|_p^p&\ge\frac{\|B^{-1}B\widetilde x\|_p^p}{\|B\widetilde x\|_p^p}=\frac{\|\widetilde x\|_p^p}{\|B\widetilde x\|_p^p}=\frac{|r|^p\|x\|_p^p+|s|^p\|y\|_p^p}{|r|^p\|A_+x\|_p^p+|s|^p\|C_-y\|_p^p}>\frac{|r|^pa^p+|s|^pb^p}{|r|^p+|s|^p}-\eps\\
&=: ta^p+(1-t)b^p-\eps \ge \min(a,b)^p-\eps
\end{align*}
holds, where $t:=|r|^p/(|r|^p+|s|^p)\in[0,1]$. Since this is the case for every $\eps>0$, we conclude $\|B^{-1}\|_p\ge\min(a,b)$. The case $p=\infty$ is similar:
\begin{align*}
\|B^{-1}\|_\infty&\ge\frac{\|B^{-1}B\widetilde x\|_\infty}{\|B\widetilde x\|_\infty}=\frac{\|\widetilde x\|_\infty}{\|B\widetilde x\|_\infty}=\frac{\max(|r|\|x\|_\infty,|s|\|y\|_\infty)}{\max(|r|\|A_+x\|_\infty,|s|\|C_-y\|_\infty)}\\
&>\frac{\max(|r|(a-\eps),|s|(b-\eps))}{\max(|r|,|s|)}=\frac{\max(|r|a,|s|b)}{\max(|r|,|s|)}-\eps \ge \min(a,b)-\eps
\end{align*}
Since $\eps>0$ is arbitrary, again $\|B^{-1}\|_\infty\ge\min(a,b)$ follows.

{\bf b) } If, in addition, $\kappa(U,V,W)=0$ then all operators in $M_+(U,V,W)$ are invertible, so that {\bf a)} can be applied to arbitrary $A_+,B_+\in M_+(U,V,W)$.

{\bf c) } If $A_+,B_+\in\PE_+(U,V,W)$ are invertible then the conditions of {\bf b)} are satisfied, by Theorem \ref{th:FSM1}.
In addition to {\bf b)}, note that, by Proposition \ref{prop:PE+} b), the minimum in \eqref{eq:M+min} is greater than or equal to $\min(\cN_p,\cN_q)$ if $A_+,B_+\in\PE_+(U,V,W)$, which equals $\cN_p=\cN_q$, by Proposition \ref{prop:PEpq}.
\end{proof}

We have seen already in Theorem \ref{th:FSM1} that $\cN_{+,p}$, our notation \eqref{eq:PEinvp} for the supremum of $\|C_+^{-1}\|_p$ over all $C_+\in M(U,V,W)$, is finite for $1\le p \le \infty$ if $(U,V,W)$ is compatible with $\kappa(U,V,W)=0$.  Propositions \ref{prop:PE+} b) and \ref{prop:M+} imply that, in many cases, this supremum is in fact a maximum and coincides with the maximum \eqref{eq:||PE||} and with the norm of $A_+^{-1}$ when $A_+\in \PE_+(U,V,W)$.

\begin{proposition} \label{prop:scenarios} Suppose that $(U,V,W)$ is compatible with $\kappa(U,V,W)=0$, equivalently that all operators in $M_+(U,V,W)$ are invertible. Then, for every $p,q\in [1,\infty]$ with $p^{-1}+q^{-1}=1$:

{\bf a)}

\vspace{-3ex}

\begin{equation}\label{eq:favo}
\|A_+^{-1}\|_p\ =\  \max_{C_+\in M_+(U,V,W)}\|C_+^{-1}\|_p\ =\ \cN_{+,p}\ =\ \cN_p\ \textrm{ for all } A_+\in \PE_+(U,V,W),
\end{equation}
or \eqref{eq:favo} holds with $p$ replaced by $q$. If \eqref{eq:favo} holds we say that $p$ is {\em favourable} (for the triple $(U,V,W)$).

{\bf b)} $p$ and $q$ are both favourable iff $\cN_{+,p}=\cN_{+,q}$. If $p$ and $q$ are both favourable, then
\begin{equation}\label{eq:favopq}
\|A_+^{-1}\|_p\ =\ \cN_{+,p}\ =\ \cN_p\ =\ \cN_q\ =\ \cN_{+,q}\ =\ \|A_+^{-1}\|_q,\ \textrm{ for all } A_+\in \PE_+(U,V,W).
\end{equation}
In particular this holds for $p=q=2$.
\end{proposition}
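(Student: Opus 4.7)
The approach relies on three ingredients already established: (i) the lower bound $\|A_+^{-1}\|_p \ge \cN_p$ for any $A_+\in\PE_+(U,V,W)$ (Proposition \ref{prop:PE+} b)); (ii) the identity $\cN_p = \cN_q$ when $p^{-1}+q^{-1}=1$ (Proposition \ref{prop:PEpq}); and (iii) the cross-norm inequality $\min(\|A_+^{-1}\|_p,\|B_+^{-1}\|_q) \le \cN_p$ for invertible $A_+,B_+\in M_+(U,V,W)$ (Proposition \ref{prop:M+} a)).

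The first step I would take is to simplify the bookkeeping by observing that ``$p$ is favourable'' in the sense of \eqref{eq:favo} is equivalent to the single scalar identity $\cN_{+,p}=\cN_p$. Indeed, for any $A_+\in\PE_+(U,V,W)$ one has the squeeze $\cN_p \le \|A_+^{-1}\|_p \le \cN_{+,p}$, so collapsing the outer terms forces every intermediate value to equal $\cN_p$, shows the supremum defining $\cN_{+,p}$ is actually attained (justifying the ``$\max$'' in \eqref{eq:favo}), and delivers \eqref{eq:favo} in full. The definition thus reduces to a single numerical condition on $(U,V,W)$, which makes the rest clean.

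With this reformulation, Part (a) is a one-line contradiction argument: if neither $p$ nor $q$ were favourable, then $\cN_{+,p}>\cN_p$ and $\cN_{+,q}>\cN_q=\cN_p$, so by definition of the suprema one can pick $A_+,B_+\in M_+(U,V,W)$ with $\|A_+^{-1}\|_p>\cN_p$ and $\|B_+^{-1}\|_q>\cN_p$, contradicting Proposition \ref{prop:M+} a). For Part (b), the forward direction is immediate from the favourability criterion combined with $\cN_p=\cN_q$. For the converse, assuming $\cN_{+,p}=\cN_{+,q}$, Part (a) supplies at least one favourable index, say $p$ without loss of generality, so $\cN_{+,q}=\cN_{+,p}=\cN_p=\cN_q$, which makes $q$ favourable as well; the chain \eqref{eq:favopq} is then just a reassembly of these equalities together with $\|A_+^{-1}\|_p=\cN_p$ for $A_+\in\PE_+$. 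The case $p=q=2$ is a triviality: ``either $p$ or $q$ is favourable'' collapses to ``$p$ is favourable'', so Part (a) alone forces $p=2$ to satisfy \eqref{eq:favo}.

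I do not anticipate a genuine obstacle at this stage, because the real analytic work -- the glueing construction quantifying the proof of Theorem \ref{thmA} -- has already been done inside Proposition \ref{prop:M+} a); what remains is essentially bookkeeping. The one subtlety to guard against is circular reasoning: one must not tacitly assume from the start that $\|A_+^{-1}\|_p$ is the same for every $A_+\in\PE_+(U,V,W)$, since establishing that equality (when $p$ is favourable) is itself part of the conclusion of \eqref{eq:favo}.
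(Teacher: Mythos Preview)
Your proof is correct and follows essentially the same route as the paper: both arguments rest on the squeeze $\cN_p \le \|A_+^{-1}\|_p \le \cN_{+,p}$ from Proposition~\ref{prop:PE+}~b), the symmetry $\cN_p=\cN_q$ from Proposition~\ref{prop:PEpq}, and the cross-norm bound of Proposition~\ref{prop:M+}. The only cosmetic difference is that the paper phrases Part~(a) as a direct dichotomy (either $\|A_+^{-1}\|_p\le\cN_p$ for all $A_+\in M_+(U,V,W)$, or some $A_+$ violates this and then Proposition~\ref{prop:M+}~b) forces $q$ to be favourable), whereas you run the contrapositive as a contradiction; your explicit reduction of ``$p$ is favourable'' to the scalar identity $\cN_{+,p}=\cN_p$ is a tidy addition that the paper leaves implicit.
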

\begin{proof}
a) Either $\|A_+^{-1}\|_p \le \cN_p$ for all $A_+\in M_+(U,V,W)$, or $\|A_+^{-1}\|_p > \cN_p$ for some $A_+\in M_+(U,V,W)$. In the first case \eqref{eq:favo} follows immediately from Proposition \ref{prop:PE+} b). In the second case, by Proposition \ref{prop:M+} b), $\|B_+^{-1}\|_q \le \cN_q$ for all $B_+\in M_+(U,V,W)$, and then \eqref{eq:favo}, with $p$ replaced by $q$, follows from Proposition \ref{prop:PE+} b).

b) is an immediate corollary of a) and Proposition \ref{prop:PE+} b).
\end{proof}

 It is unclear to us whether every $p\in[1,\infty]$ is favourable for every triple $(U,V,W)$.  Indeed, while, for every triple $(U,V,W)$, $p\in [1,\infty]$, and $A_+\in \PE_+(U,V,W)$, it follows from Propositions \ref{prop:PE+} b) and \ref{prop:M+} c) that
 \begin{equation} \label{eq:conj}
 \cN_p\ \le\ \|A_+^{-1}\|_p\ \le\ \cN_{+,p},
 \end{equation}
 it is unclear to us whether or not there are examples for which \eqref{eq:conj} holds with one or both ``$\le$'' replaced by ``$<$''. Likewise, it is unclear to us whether or not there are cases where $\|A_+^{-1}\|_p\neq \|B_+^{-1}\|_p$ with $A_+,B_+\in \PE_+(U,V,W)$. A simple case to study is that where $U$, $V$, and $W$ are singletons, in which case $M_+(U,V,W)=\PE_+(U,V,W)$ has only one element $A_+$, which is a tridiagonal Toeplitz operator. If $A_+$ is invertible, then $\cN_{+,p} = \|A_+^{-1}\|_p$.  Example 6.6 of \cite{BoeGru} shows a banded Toeplitz operator
$A_+$, for which $\|A_+^{-1}\|_p$ and $\|A_+^{-1}\|_q$ differ: if this were a {\em tridiagonal} banded Toeplitz operator then this would provide an example of a triple $(U,V,W)$ where $p$ and $q$ are not both favourable. On the other hand, the following result shows that all $p\in [1,\infty]$ are favourable for some classes of triples $(U,V,W)$.

\begin{proposition} \label{prop:favo_cases}

{\bf a)} Suppose that $p,q\in [1,\infty]$ and $p^{-1}+q^{-1}=1$. Then $p$ is favourable for $(U,V,W)$ iff $q$ is favourable for $(W,V,U)$.

{\bf b)} If $U=W$ or $0\in U\cup W$, then, for all $p\in [1,\infty]$, $p$ is favourable (for $(U,V,W)$) and \eqref{eq:favopq} holds.
\end{proposition}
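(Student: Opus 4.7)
The plan rests on two ideas: a transpose duality for part (a), and a refined gluing construction for part (b).

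For part (a), the key is the transpose bijection $T: M_+(U,V,W) \to M_+(W,V,U)$, $B_+ \mapsto B_+^\top$, which sends the sub- and super-diagonals of $B_+$ in $U$ and $W$ to the sub- and super-diagonals of $B_+^\top$ in $W$ and $U$. Standard $\ell^p$-$\ell^q$ duality gives $\|B_+\|_p = \|B_+^\top\|_q$ and, when $B_+$ is invertible, $\|B_+^{-1}\|_p = \|(B_+^\top)^{-1}\|_q$; and $T$ restricts to a bijection $\PE_+(U,V,W) \to \PE_+(W,V,U)$, since transposition preserves the principal-submatrix structure underlying pseudoergodicity. Taking suprema/maxima gives $\cN_p(U,V,W) = \cN_q(W,V,U)$ and $\cN_{+,p}(U,V,W) = \cN_{+,q}(W,V,U)$, and compatibility is preserved. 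Translating the definition of favourability through $T$ immediately yields (a).

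For part (b), the main new ingredient in the case $0\in W$ is a one-sided variant of the gluing construction from the proof of Proposition \ref{prop:M+}. Fix any $A_+ \in M_+(U,V,W)$ (automatically invertible, by the assumption $\kappa(U,V,W)=0$) and build $B \in M(U,V,W)$ as in \eqref{eq:matB+-} with $A_+$ as the lower-right semi-infinite block, an arbitrary $C_- \in M_-(U,V,W)$ as the upper-left block, and -- crucially -- the super-diagonal gluing entry $b_{0,1}$ chosen to be $0\in W$. Given any $x\in\ell^p(\N)$ with $A_+ x = y$, extend it to $\tilde x\in\ell^p(\Z)$ by zero on $\{j\le 0\}$. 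A direct computation shows that $(B\tilde x)_j = 0$ for $j\le 0$ (the $j=0$ case using precisely $b_{0,1}=0$) and $(B\tilde x)_j = y_j$ for $j\ge 1$, so $\|B\tilde x\|_p = \|y\|_p$ and $\|\tilde x\|_p = \|x\|_p$. Taking the supremum over $y$ yields $\|A_+^{-1}\|_p \le \|B^{-1}\|_p \le \cN_p$, hence $\cN_{+,p} \le \cN_p$; combined with Proposition \ref{prop:PE+}\,b) this forces the chain of equalities in \eqref{eq:favo}, so $p$ is favourable when $0\in W$.

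The remaining sub-cases of (b) reduce to the above. If $0\in U$, then $0$ lies in the super-diagonal slot of the transposed triple $(W,V,U)$, so the previous paragraph gives that every $q$ is favourable for $(W,V,U)$; by (a), every $p$ is then favourable for $(U,V,W)$. If $U=W$, then $(W,V,U)=(U,V,W)$, so (a) becomes ``$p$ favourable iff $q$ favourable for the same triple'', and since at least one of $p,q$ is always favourable by Proposition \ref{prop:scenarios}\,a), both are. In every sub-case, once every $p$ is favourable, equality \eqref{eq:favopq} is immediate from Proposition \ref{prop:scenarios}\,b). The single non-routine step -- and the main obstacle I see -- is the $0\in W$ case: the point is that choosing the super-diagonal gluing entry to be zero cleanly decouples the tail, upgrading the ``min'' bound \eqref{eq:M+min} to the one-sided estimate $\|A_+^{-1}\|_p \le \|B^{-1}\|_p$.
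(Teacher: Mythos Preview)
Your proposal is correct and follows essentially the same route as the paper. Part (a) is the same transpose-duality argument; for part (b), in the case $0\in W$ you use the identical gluing construction (the paper's \eqref{eq:Bdef} with $w=0$, $r=1$, $s=0$) to embed $A_+$ into a bi-infinite $B\in M(U,V,W)$, obtaining $\|A_+^{-1}\|_p\le\|B^{-1}\|_p\le\cN_p$ --- the paper phrases this as a contradiction argument but the content is the same --- and the reductions of the $0\in U$ and $U=W$ sub-cases via part (a) and Proposition~\ref{prop:scenarios}\,a) match the paper's treatment exactly.
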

\begin{proof}
a) This is clear from Proposition \ref{prop:scenarios}, since $A_+$ is in  $M_+(U,V,W)$ or $\PE_+(U,V,W)$ iff $A_+^\top $ is in $M_+(W,V,U)$ or $\PE_+(W,V,U)$, respectively, and $A_+$ is invertible iff $A_+^\top $ is invertible, in which case $\|A_+^{-1}\|_p=\|(A_+^\top)^{-1}\|_q$. \\
b) In the case that $U=W$ it follows from a) that $p$ and $q$ are both favourable.  To show that they are both favourable if $0\in W$, suppose that one of the two (say $p$) is not favourable. Then there exists $A_+\in M_+(U,V,W)$ and $x\in \ell^p(\N)$ with $\|A_+x\|_p<\cN^{-1}_p\|x\|_p$. Take any $B_+\in M_+(U,V,W)$, set $C_-=RB_+^\top R$ (as in the proof of Proposition \ref{prop:M+}), and define $B\in M(U,V,W)$ and $\tilde x\in \ell^p(\Z)$ by \eqref{eq:Bdef}, but choosing in particular $w=0\in W$, $r=1$, and $s=0$. Then $\|B\tilde x\|_p = \|A_+x\|_p < \cN^{-1}_p \|x\|_p = \cN^{-1}_p \|\tilde x\|_p$. But this implies that $\|B^{-1}\|_p > \cN_p$, a contradiction. Thus $p$ and $q$ are both favourable if $0\in W$, and it follows from part a) that they are both favourable also when $0\in U$.
\end{proof}

As one example, part b) of this proposition applies to the Feinberg-Zee random hopping matrix, $A_+\in \PE_+(U,V,W)$ or $A\in \PE(U,V,W)$ with $U=W=\{\pm 1\}$ and $V=\{0\}$ (e.g.~\cite{CWChonchaiyaLindner2011,CWDavies2011,CCL2,Hagger:NumRange,Hagger:dense,Hagger:symmetries}),  so that \eqref{eq:favopq} holds in that case (cf.~\cite[Theorem 3.6]{CCL2}).

\subsection{The relationship between finite and infinite matrices}
In this subsection we obtain more quantitative versions of the results of Section \ref{sec:FSM}. We first note the following finite version of Proposition \ref{prop:PEpq}, proved in the same way, using the observation that, for every $B\in M_n(U,V,W)$, $A=R_n B^\top  R_n \in M(U,V,W)$, where $R_n=(r_{ij})_{i,j=1,...,n}$ is the $n\times n$ matrix with $r_{ij} = \delta_{i,n+1-j}$, where $\delta_{ij}$ is the Kronecker delta.

\begin{lemma} \label{lem:PEpqfin}
For $p,q\in[1,\infty]$ with
$p^{-1}+q^{-1}=1$ and $n\in \N$, we have $\M_{n,p}=\M_{n,q}$ and $\cN_{n,p} = \cN_{n,q}$, so that $\Mfinp = \Mfinq$ and $\cNfinp = \cNfinq$, where
\begin{eqnarray*}
\M_{n,p}\ :=\ \sup_{F\in M_n(U,V,W)} \|F\|_p,\ \qquad \cN_{n,p}\ :=\ \sup_{F\in M_n(U,V,W)} \|F^{-1}\|_p,\ \\
\Mfinp := \sup_{F\in \Mfin(U,V,W)} \|F\|_p \qquad \mbox{ and } \qquad \cNfinp := \sup_{F\in \Mfin(U,V,W)} \|F^{-1}\|_p.
\end{eqnarray*}
\end{lemma}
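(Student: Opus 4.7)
The plan is to follow the hint literally: exploit the map $\Phi\colon B\mapsto R_nB^\top R_n$ on $M_n(U,V,W)$ to interchange the roles of $p$ and $q$, exactly mirroring the proof of Proposition \ref{prop:PEpq} but in the finite-dimensional setting.

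First I would verify that $\Phi$ is a well-defined involution on $M_n(U,V,W)$. A direct index computation gives $(R_nB^\top R_n)_{ij}=b_{n+1-j,\,n+1-i}$, which is zero whenever $|i-j|>1$, which equals $v_{n+1-i}\in V$ on the diagonal, $w_{n-i}\in W$ on the superdiagonal, and $u_{n+2-i}\in U$ on the subdiagonal. (Transposition swaps sub- and super-diagonals; anti-diagonal reflection swaps them again; the two effects cancel.) Since $R_n^\top=R_n$ and $R_n^2=I_n$, one checks $\Phi(\Phi(B))=B$, so $\Phi$ is a bijection $M_n(U,V,W)\to M_n(U,V,W)$.

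Next, since $R_n$ is a permutation matrix, it is an isometry on $(\C^n,\|\cdot\|_p)$ for every $p\in[1,\infty]$, so $\|R_n\|_p=1$. Combining this with the classical duality $\|C^\top\|_p=\|C\|_q$ (which holds for every $n\times n$ complex matrix $C$ when $p^{-1}+q^{-1}=1$), one obtains
\[
\|\Phi(B)\|_p\ =\ \|R_nB^\top R_n\|_p\ =\ \|B^\top\|_p\ =\ \|B\|_q.
\]
Taking the supremum over $B\in M_n(U,V,W)$ and using that $\Phi$ is a bijection of this set to itself yields $\M_{n,p}=\M_{n,q}$. For the inverse-norm statement, note that $B$ is invertible iff $\Phi(B)$ is, and in that case $\Phi(B)^{-1}=R_n(B^{-1})^\top R_n=\Phi(B^{-1})$, so the same calculation gives $\|\Phi(B)^{-1}\|_p=\|B^{-1}\|_q$. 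When $B$ is singular, both $\|B^{-1}\|_q$ and $\|\Phi(B)^{-1}\|_p$ are $\infty$ by the paper's convention, and the identity survives in the extended sense. Taking the supremum then yields $\cN_{n,p}=\cN_{n,q}$.

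Finally, since $\Mfin(U,V,W)=\bigcup_{n\in\N}M_n(U,V,W)$, we get $\Mfinp=\sup_n\M_{n,p}=\sup_n\M_{n,q}=\Mfinq$, and $\cNfinp=\cNfinq$ in the same way. The only even mildly nontrivial step is the first one (checking that $\Phi$ preserves tridiagonal structure with diagonals constrained to $U$, $V$, $W$), and that reduces to a one-line index calculation; everything else is a routine application of $\ell^p$--$\ell^q$ duality and the isometry property of permutation matrices.
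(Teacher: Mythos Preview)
Your proof is correct and follows essentially the same approach as the paper: the paper indicates that the lemma is proved in the same way as Proposition \ref{prop:PEpq}, using precisely the observation that $B\mapsto R_nB^\top R_n$ maps $M_n(U,V,W)$ to itself, and you have carefully filled in those details. Nothing further is needed.
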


 The following simple lemma relates $\Mfinp$ to $\M_p$, defined in Corollary \ref{cor:PE}.

\begin{lemma} \label{lem:Meq}
For $p\in [1,\infty]$,
$\Mfinp = \lim_{n\to\infty} \M_{n,p} = \M_p$.
\end{lemma}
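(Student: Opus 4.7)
The plan is to prove the two equalities separately. First I would observe that $\M_{n,p}$ is monotonically non-decreasing in $n$: given any $F\in M_n(U,V,W)$, pick any $u\in U$, $v\in V$, $w\in W$ and append a row and column to obtain an $F'\in M_{n+1}(U,V,W)$ that has $F$ as its leading $n\times n$ principal submatrix. Since compressions do not increase the induced $\ell^p$-norm, $\|F\|_p\le\|F'\|_p\le\M_{n+1,p}$, and so $\M_{n,p}\le\M_{n+1,p}$. Hence $\Mfinp=\sup_n\M_{n,p}=\lim_{n\to\infty}\M_{n,p}$, settling the first equality.

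For $\Mfinp\le\M_p$, I would, for each $F\in M_n(U,V,W)$, extend $F$ to a bi-infinite $B\in M(U,V,W)$ by filling in tridiagonal entries drawn from $U$, $V$, $W$ arbitrarily in all remaining positions. Since $F$ is a finite principal compression of $B$, $\|F\|_p\le\|B\|_p\le\M_p$.

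For the reverse inequality $\M_p\le\lim_n\M_{n,p}$ in the case $1\le p<\infty$, I would fix $A\in\PE(U,V,W)$, so that $\|A\|_p=\M_p$ by Corollary \ref{cor:PE}. Since the finitely supported sequences are dense in $\ell^p(\Z)$, for every $\eps>0$ there is some finitely supported $x\neq 0$ with $\|Ax\|_p>(\M_p-\eps)\|x\|_p$. Because $A$ is tridiagonal, both $x$ and $Ax$ are supported in a common finite interval $\{k+1,\dots,k+n\}$ for suitable $k$ and $n$, so the ratio $\|Ax\|_p/\|x\|_p$ equals $\|F\tilde x\|_p/\|\tilde x\|_p$ where $F$ is the corresponding $n\times n$ principal submatrix of $A$ (which lies in $M_n(U,V,W)$) and $\tilde x\in\C^n$ is the restriction of $x$. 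Hence $\M_{n,p}\ge\|F\|_p\ge\M_p-\eps$, so $\lim_n\M_{n,p}\ge\M_p$. For $p=\infty$ the finitely supported sequences are not dense, so I would invoke duality instead: by Proposition \ref{prop:PEpq} and Lemma \ref{lem:PEpqfin}, $\M_\infty=\M_1$ and $\M_{n,\infty}=\M_{n,1}$ for every $n$, so the $p=1$ case already proved yields $\M_\infty=\M_1\le\lim_n\M_{n,1}=\lim_n\M_{n,\infty}$.

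The only mildly delicate point is the $p=\infty$ case, handled cleanly by the duality step above; everything else is a routine compression/approximation argument.
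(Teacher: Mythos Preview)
Your argument is correct. The approach differs from the paper's in two respects. For the upper bound $\Mfinp\le\M_p$, you extend a given $F$ to a bi-infinite $B\in M(U,V,W)$ and use that compressions contract the norm; the paper instead fixes $A\in\PE(U,V,W)$ and observes that every $F$ is an arbitrarily small perturbation of some finite section of $A$, so $\|F\|_p\le\|A\|_p=\M_p$. Both are equally short. For the lower bound, the paper avoids any case split by invoking the second inequality in \eqref{eq:liminf}, namely $\liminf_n\|A_n\|_p\ge\|A\|_p$, which was noted there to hold for all $p\in[1,\infty]$ via the pointwise-convergence argument (truncate $x$, use bandedness so $(A_nx_n)(j)\to(Ax)(j)$, and apply Fatou for $\ell^p$-norms). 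Your route---density of finitely supported vectors for $p<\infty$ plus duality through Proposition~\ref{prop:PEpq} and Lemma~\ref{lem:PEpqfin} for $p=\infty$---is more self-contained and does not lean on that earlier remark, at the cost of handling $p=\infty$ separately. Either way the lemma follows; your monotonicity observation also makes the equality $\Mfinp=\lim_n\M_{n,p}$ explicit, which the paper leaves implicit.
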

\begin{proof} Let $A\in \PE(U,V,W)$ so that $\|A\|_p = \M_p$ by Corollary \ref{cor:PE}. For $F\in \Mfin(U,V,W)$, $\|F\|_p \le \|A\|_p$, since every $F$ is an arbitrarily small perturbation of  a finite section of $A$. On the other hand, if $A_n$ is the finite section of $A$ given by \eqref{eq:An}, then we have noted in \eqref{eq:liminf} that $\liminf_{n\to\infty} \|A_n\|_p \ge \|A\|_p$.
\end{proof}



The following is a more quantitative version of Theorem \ref{th:FSM2}:

\begin{proposition}\label{prop:F1}

{\bf a)} Properties (a)--(k) of Theorem \ref{th:FSM1} are equivalent to:
\begin{itemize}\itemsep-1mm
\item[(l)] all $F\in\Mfin(U,V,W)$ are invertible and their inverses are uniformly bounded.
\end{itemize}
If (a)--(l) are satisfied then
\begin{equation}\label{eq:F2}
\cNfinp\ =\
\max\left(\cN_{+,p}\ ,\ \cN_{+,q}\right),
\end{equation}
for every $p,q\in [1,\infty]$ with $p^{-1}+q^{-1}=1$.

{\bf b) } In the case that $p$ and $q$ in {\bf a)} are both favourable, \eqref{eq:F2} simplifies to
\begin{equation}\label{eq:favoMfinM+M}
\cNfinp\ =\ \cNfinq\ =\ \cN_{+,p}\ =\ \cN_{+,q}\ =\ \cN_p\ =\ \cN_q.
\end{equation}
\end{proposition}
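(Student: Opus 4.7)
The plan is to prove part a) by establishing the equivalence of (l) with (a)--(k) in tandem with the formula \eqref{eq:F2}, and then obtain part b) as a short corollary. The implication $(l)\Rightarrow(b)$ is immediate: for any $A_+\in\PE_+(U,V,W)$, the finite sections $(A_+)_n$ lie in $\Mfin(U,V,W)$, so (l) gives stability of the sequence $((A_+)_n)$, whence by the semi-infinite analogue of Lemma \ref{lem:FSMappl} the full FSM applies to $A_+$, which is (b). The converse $(a)$--$(k)\Rightarrow(l)$ will follow from Theorem \ref{th:FSM2} (invertibility of every $F\in\Mfin(U,V,W)$) together with the ``$\le$'' half of \eqref{eq:F2}, once we know $\cNfinp\le\max(\cN_{+,p},\cN_{+,q})$, since $\cN_{+,p},\cN_{+,q}<\infty$ by Theorem \ref{th:FSM1}(d) applied to both $p$ and $q$.

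The ``$\ge$'' direction of \eqref{eq:F2} is the easy one. For each $B_+\in M_+(U,V,W)$ and each $n$, the section $(B_+)_n$ belongs to $\Mfin(U,V,W)$, and the semi-infinite version of \eqref{eq:liminf} gives $\liminf_{n\to\infty}\|(B_+)_n^{-1}\|_p\ge\|B_+^{-1}\|_p$. Taking the sup over $B_+$ yields $\cNfinp\ge\cN_{+,p}$; applying the same argument with the norm $q$ and then invoking Lemma \ref{lem:PEpqfin} ($\cNfinp=\cNfinq$) gives also $\cNfinp\ge\cN_{+,q}$, hence $\cNfinp\ge\max(\cN_{+,p},\cN_{+,q})$.

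The main step is the ``$\le$'' direction, for which I would adapt the glueing construction from \eqref{eq:matB}--\eqref{eq:zn} already used in the proofs of Theorem \ref{thmA} and Theorem \ref{th:FSM2}. Given $F\in M_n(U,V,W)$ and $\eps>0$, choose $x\in\C^n\setminus\{0\}$ with $\|x\|_p/\|Fx\|_p\ge\|F^{-1}\|_p-\eps$ (and an extremal $x$ for $p=\infty$), fix $u\in U$, $v\in V$, $w\in W$, and set $\rho:=-ux_n/(wx_1)$ when $wx_1\ne 0$; the degenerate cases $x_1=0$, $x_n=0$, $u=0$, or $w=0$ are handled by a single-block construction exactly as in the proof of Lemma \ref{lem:choose-rk}. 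When $|\rho|<1$, take $r_k=\rho^k$ for $k\ge 0$ and $r_k=0$ for $k<0$; then $\tilde x_+:=\tilde x|_\N\in\ell^p(\N)$, the restriction $B_+:=B|_{\N\times\N}$ lies in $M_+(U,V,W)$, the interior gap terms $z_k$ vanish by the choice of $\rho$, and $B_+\tilde x_+=(r_0Fx,0,r_1Fx,0,\ldots)^\top$ (no left-boundary term arises because the first $F$-block is placed flush against position $1$), giving
\[
\|B_+^{-1}\|_p\ \ge\ \|\tilde x_+\|_p/\|B_+\tilde x_+\|_p\ =\ \|x\|_p/\|Fx\|_p\ \ge\ \|F^{-1}\|_p-\eps,
\]
so that $\|F^{-1}\|_p\le\cN_{+,p}$. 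When $|\rho|>1$, the symmetric construction on $\ell^p(-\N)$ yields a semi-infinite $B_-$, and Remark \ref{rem:reflect} (applied to $C_+:=RB_-^\top R\in M_+(U,V,W)$) gives $\|F^{-1}\|_p\le\|B_-^{-1}\|_p=\|C_+^{-1}\|_q\le\cN_{+,q}$. For $|\rho|=1$ with $p<\infty$, I would truncate to $r_k=\rho^k$ for $k=0,\dots,M$: the single boundary contribution $|ur_Mx_n|^p$ stays bounded while both $\|\tilde x_+\|_p^p$ and the bulk of $\|B_+\tilde x_+\|_p^p$ grow linearly in $M$, so the ratio still tends to $\|x\|_p/\|Fx\|_p$ as $M\to\infty$. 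For $|\rho|=1$ with $p=\infty$, the bi-infinite construction with $r_k=\rho^k$ for all $k\in\Z$ has $\tilde x\in\ell^\infty(\Z)$ bounded and all $z_k=0$, yielding $\|F^{-1}\|_\infty\le\|B^{-1}\|_\infty\le\cN_\infty\le\cN_{+,\infty}$ by Proposition \ref{prop:PE+}(b).

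Part b) then follows at once: favourability of $p$ and $q$ gives $\cN_{+,p}=\cN_p$ and $\cN_{+,q}=\cN_q$; Proposition \ref{prop:PEpq} gives $\cN_p=\cN_q$, so $\max(\cN_{+,p},\cN_{+,q})=\cN_{+,p}=\cN_{+,q}$, and \eqref{eq:F2} combined with Lemma \ref{lem:PEpqfin} ($\cNfinp=\cNfinq$) collapses to the chain of equalities \eqref{eq:favoMfinM+M}. The main obstacle will be the case analysis in the ``$\le$'' direction: the need for $\ell^p$-integrability of the multiplier sequence $(r_k)$ forces the split according to whether $|\rho|<1$, $>1$, or $=1$, and the boundary term in the borderline case $|\rho|=1$ has to be controlled differently for $p<\infty$ (via a truncated, linearly growing sum) and $p=\infty$ (via a genuinely bi-infinite, bounded sequence).
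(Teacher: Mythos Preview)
Your proof is correct and follows essentially the same approach as the paper's: the same glueing construction \eqref{eq:matB}--\eqref{eq:zn} with multiplier sequence $(r_k)$, the same case split governed by the summability of $(r_k)$, and the same appeal to \eqref{eq:liminf} for the ``$\ge$'' direction. The only cosmetic differences are that the paper takes an exact extremal vector $x$ (available since $F$ is a finite matrix) rather than an $\eps$-approximate one, organises the cases as $p=\infty$ versus $p<\infty$ with $(r_k)\in\ell^p$ or $(r_k)\notin\ell^p$, and handles the borderline $|\rho|=1$, $p=\infty$ directly via the semi-infinite $B_+$ (since $(r_k)$ is bounded) rather than detouring through the bi-infinite $B$.
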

\begin{proof}
{\bf a)} If $(l)$ holds then, by the equivalence of ii) and iii) in Lemma \ref{lem:FSMappl} and the definition of stability, $(e)$ holds. But this implies invertibility of all $F\in\Mfin(U,V,W)$ by Theorem \ref{th:FSM2}. The uniform boundedness of the inverses $F^{-1}$ (and hence $(l)$) will follow if we can prove ``$\le$'' in \eqref{eq:F2}.

To see that \eqref{eq:F2} holds, fix $p\in [1,\infty]$, $n\in\N$, and an $F\in M_n(U,V,W)$.
To estimate $\|F^{-1}\|_p=:f$, fix $x\in\C^n$ with $\|Fx\|_p=1$ and $\|x\|_p=f$. As in the proof of Theorem \ref{th:FSM2}, define $B$ by \eqref{eq:matB} and $B_+$ and $B_-$ as in \eqref{eq:matB+-}, and define $\tilde x$ by \eqref{eq:vecx}. Again choose $(r_k)$ as in Lemma \ref{lem:choose-rk} so that \eqref{eq:r+} or \eqref{eq:r-} holds. First assume it is \eqref{eq:r+}.

{\bf Case 1:} $p=\infty$. From \eqref{eq:r+} we get
\[
\|B_+^{-1}\|_\infty\ge\frac{\|B_+^{-1}B_+\tilde x_+\|_\infty}{\|B_+\tilde x_+\|_\infty} = \frac{\|\tilde x_+\|_\infty}{\|B_+\tilde x_+\|_\infty} = \frac{\sup_{k\in \N\cup\{0\}}|r_k|\|x\|_\infty}{\sup_{k\in \N\cup\{0\}}|r_k|\|Fx\|_\infty} = \frac{f}{1} = \|F^{-1}\|_\infty\ .
\]

{\bf Case 2:} $p<\infty$ and $\tilde x_+\in\ell^p(\N)$, i.e. $(r_k)_{k=0}^{+\infty}\in\ell^p(\N)$. Then, by \eqref{eq:r+},
\[
\|B_+^{-1}\|_p^p\ge \frac{\|B_+^{-1}B_+\tilde x_+\|_p^p}{\|B_+\tilde x_+\|_p^p} = \frac{\|\tilde x_+\|_p^p}{\|B_+\tilde x_+\|_p^p} =  \frac{\sum_{k=0}^{+\infty}|r_k|^p\|x\|_p^p}{\sum_{k=0}^{+\infty}|r_k|^p\|Fx\|_p^p} = \frac{f^p}{1^p} = \|F^{-1}\|_p^p
\]

{\bf Case 3:} $p<\infty$ and $\tilde x_+\not\in\ell^p(\N)$, i.e. $(r_k)_{k=0}^{+\infty}\not\in\ell^p(\N)$. Then $s_m:=\sum_{k=0}^{m}|r_k|^p\to\infty$ as $m\to\infty$.
Let $m\in\N$ and put $\tilde x_m:=(\tilde x(1),\tilde x(2),\cdots,\tilde x((m+1)(n+1)),0,0,\cdots)\in\ell^p(\N)$. Then
\[
\|B_+^{-1}\|_p^p\ge \frac{\|\tilde x_m\|_p^p}{\|B_+\tilde x_m\|_p^p} =  \frac{\sum_{k=0}^{m}|r_k|^p\|x\|_p^p}{\sum_{k=0}^{m}|r_k|^p\|Fx\|_p^p+|ur_mx_n|^p} = \frac{s_m f^p}{s_m+|ur_mx_n|^p}\stackrel{m\to\infty}{\longrightarrow}f^p=\|F^{-1}\|_p^p
\]
since $s_m\to\infty$ as $m\to\infty$ and $r_m$ is bounded.

So in either case we get $\|F^{-1}\|_p\le\|B_+^{-1}\|_p$ if \eqref{eq:r+} holds. The other case, \eqref{eq:r-}, is analogous and leads to $\|F^{-1}\|_p\le\|B_-^{-1}\|_p=\|C_+^{-1}\|_q$, where $C_+:=RB_-^\top R$ is the reflection of $B_-$ as discussed in Remark \ref{rem:reflect}. Since we only know that \eqref{eq:r+} or \eqref{eq:r-} applies, but not which one of them, we conclude $\|F^{-1}\|_p\le\max(\|B_+^{-1}\|_p,\|C_+^{-1}\|_q)$. Since  $F\in\Mfin(U,V,W)$ is arbitrary and $B_+,C_+\in M_+(U,V,W)$, this finishes the proof of ``$\le$'' in \eqref{eq:F2} and hence of the implication $(a)\Rightarrow(l)$. The ``$\ge$'' in \eqref{eq:F2} follows from \eqref{eq:liminf} or \eqref{eq:limsup}.

{\bf b)} follows from {\bf a)}, \eqref{eq:favo} and \eqref{eq:favopq}.
\end{proof}

Combining Proposition \ref{prop:F1} with Lemma \ref{lem:Meq} we can relate the $p$-condition numbers, $\cond_p(F)$ $:= \|F\|_p\ \|F^{-1}\|_p$ of matrices $F\in \Mfin(U,V,W)$, to the corresponding condition numbers of $A\in \PE(U,V,W)$ and $A_+\in \PE_+(U,V,W)$. For example, in the case that $p$ and $q$ are both favourable we have:

\begin{proposition} \label{prop:cond}
Suppose that $(U,V,W)$ is compatible and $\kappa(U,V,W)=0$, so that every FSM given by \eqref{eq:Anxn=b} with $l_n\to -\infty$ and $r_n\to \infty$ is applicable to $A\in \PE(U,V,W)$, and every FSM given by \eqref{eq:Anxn=b} with $l_n=1$ and $r_n\to \infty$ is applicable to $A_+\in \PE_+(U,V,W)$.  Suppose  also that $p$ and $q$ are both favourable. Then, for all these finite section methods,
\begin{eqnarray} \label{eq:conv}
\|A_n\|_p\ \to\ \|A\|_p\ =\ \|A_+\|_p\ =\ \M_p, & &\|A_n^{-1}\|_p\ \to\ \|A^{-1}\|_p\ =\ \|A_+^{-1}\|_p\ =\ \cN_p,\\
\nonumber \mbox{so that } & & \cond_p(A_n)\ \to\ \cond_p(A)\ =\ \cond_p(A_+),
\end{eqnarray}
as $n\to\infty$. Further,
\[
\max_{B \in M(U,V,W)}\ \cond_p(B)\ =\ \max_{B_+ \in M_+(U,V,W)}\ \cond_p(B_+)\
=\ \sup_{F \in \Mfin(U,V,W)}\ \cond_p(F)\ =\ \M_p\cN_p,
\]
where the two maxima are attained, respectively, by all $A\in \PE(U,V,W)$, and by all $A_+\in \PE(U,V,W)$.
\end{proposition}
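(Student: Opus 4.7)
The plan is to assemble the proposition from results already established in the paper, using crucially that both $p$ and $q$ are favourable so that \eqref{eq:favopq} and \eqref{eq:favoMfinM+M} apply. There is essentially no new analytic work; what is needed is a careful bookkeeping of which previous result supplies each inequality and each equality.

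First I would handle the identifications $\|A\|_p = \|A_+\|_p = \M_p$ and $\|A^{-1}\|_p = \|A_+^{-1}\|_p = \cN_p$. The first chain is immediate from Corollary \ref{cor:PE} (which gives $\|A\|_p = \M_p$) together with Proposition \ref{prop:PE+} a) (which gives $\|A_+\|_p = \M_p$). The second chain is precisely the content of \eqref{eq:favopq}, specialised to $p$. Next I would establish the convergence $\|A_n\|_p \to \|A\|_p$: the lower bound $\liminf_n \|A_n\|_p \ge \|A\|_p$ is \eqref{eq:liminf}, while $A_n \in \Mfin(U,V,W)$ forces $\|A_n\|_p \le \Mfinp = \M_p = \|A\|_p$ by Lemma \ref{lem:Meq}, giving the matching upper bound. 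The corresponding convergence $\|A_n^{-1}\|_p \to \|A^{-1}\|_p$ uses Theorem \ref{th:FSM2} to ensure every $A_n$ is invertible, then combines $\liminf_n \|A_n^{-1}\|_p \ge \|A^{-1}\|_p$ from \eqref{eq:liminf} with $\|A_n^{-1}\|_p \le \cNfinp = \cN_p = \|A^{-1}\|_p$ from \eqref{eq:favoMfinM+M}. Multiplying the two convergence statements gives $\cond_p(A_n)\to \cond_p(A)=\cond_p(A_+)$.

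For the three equalities involving maxima and the supremum of $\cond_p$, in each case I would prove an upper bound $\M_p\cN_p$ and exhibit an element achieving it. For $B\in M(U,V,W)$, Proposition \ref{prop:PE} (applied to any $A\in\PE(U,V,W)$) gives $\|B\|_p\le\M_p$ and $\|B^{-1}\|_p\le\cN_p$, so $\cond_p(B)\le\M_p\cN_p$, and Corollary \ref{cor:PE} shows that equality is attained by every $A\in\PE(U,V,W)$. For $B_+\in M_+(U,V,W)$ one uses Proposition \ref{prop:PE+} a) for $\|B_+\|_p\le\M_p$ and the favourability identity $\cN_{+,p}=\cN_p$ from \eqref{eq:favopq} for $\|B_+^{-1}\|_p\le\cN_p$; attainment is by every $A_+\in\PE_+(U,V,W)$, again via \eqref{eq:favopq}. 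For $F\in\Mfin(U,V,W)$, $\|F\|_p\le\Mfinp=\M_p$ by Lemma \ref{lem:Meq} and $\|F^{-1}\|_p\le\cNfinp=\cN_p$ by \eqref{eq:favoMfinM+M}, while the fact that $\cond_p(A_n)\to\M_p\cN_p$ shows the supremum is at least $\M_p\cN_p$ (though in general not attained inside $\Mfin(U,V,W)$, which is why ``$\sup$'' rather than ``$\max$'' is written).

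The only subtle point--and hence the ``main obstacle,'' although it is really a matter of vigilance--is remembering that one cannot in general replace $\cN_{+,p}$ by $\cN_p$ or $\cNfinp$ by $\cN_p$ without invoking favourability of both $p$ and $q$; without this assumption the intermediate steps $\|A_n^{-1}\|_p\le\cN_p$ and $\|B_+^{-1}\|_p\le\cN_p$ are not available, and the max/sup equalities collapse only to $\le \M_p\cNfinp$ rather than $\M_p\cN_p$. The hypothesis of the proposition is exactly what is required to push every bound down to the common value $\cN_p$.
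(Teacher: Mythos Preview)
Your proposal is correct and follows essentially the same route as the paper's own proof: the same sandwiching via \eqref{eq:liminf} on the lower side and $\Mfinp=\M_p$ (Lemma \ref{lem:Meq}), $\cNfinp=\cN_p$ (Proposition \ref{prop:F1} b)) on the upper side, together with Corollary \ref{cor:PE}, Proposition \ref{prop:PE+} a), and \eqref{eq:favopq} for the equalities and attainment. Your closing remark on why favourability of both $p$ and $q$ is essential is a helpful gloss not present in the paper, but otherwise the arguments coincide.
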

\begin{proof}
The first sentence is part of Theorem \ref{th:FSM1}. And if $p$ and $q$ are both favourable, then the equalities in \eqref{eq:conv} follow from Corollary \ref{cor:PE}, Proposition \ref{prop:PE+} a), and \eqref{eq:favopq}.
The stated limits are a consequence of
\[
\M_p=\|A\|_p\le\liminf_{n\to\infty}\|A_n\|_p\le\limsup_{n\to\infty}\|A_n\|_p\le\Mfinp=\M_p
\]
and
\[
\cN_p=\cN_{+,p}=\|A_+^{-1}\|_p\le\liminf_{n\to\infty}\|A_n^{-1}\|_p\le\limsup_{n\to\infty}\|A_n^{-1}\|_p\le\cNfinp=\cN_p,
\]
by \eqref{eq:liminf} and that, by Lemma \ref{lem:Meq} and Proposition \ref{prop:F1} b), $\Mfinp = \M_p$ and $\cNfinp = \cN_p$. In the last displayed equation the first equality, and that these maxima are attained as stated and have the value $\M_p\cN_p$, follows from Corollary \ref{cor:PE} and Propositions \ref{prop:PE+} a) and \ref{prop:scenarios} b). That $\sup_{F \in \Mfin(U,V,W)} \cond_p(F)\le  \M_p\cN_p$ is clear from $\Mfinp\cNfinp = \M_p\cN_p$; that in fact equality holds is clear from \eqref{eq:conv}.
\end{proof}

\subsection{Pseudospectra}
We can rephrase our results on the norms of inverses, $\|J^{-1}\|$, of Jacobi matrices $J$ over $(U,V,W)$ in terms of resolvent norms $\|(J-\lambda I)^{-1}\|$ and pseudospectra, noting that $J-\lambda I$ is a Jacobi matrix over $(U,V-\lambda,W)$. In particular, $J$ and $J-\lambda I$ are both pseudoergodic at the same time. In the language of pseudospectra, Corollary \ref{cor:PE} and Proposition \ref{prop:PEpq} can be rewritten as follows:
\begin{corollary} {\bf -- bi-infinite matrices. }\label{cor:speps.PE}
For all $A\in\PE(U,V,W)$, $\eps>0$ and $p\in[1,\infty]$, it holds that
\[
\speps^p A\ =\ \S_\eps^p\ :=\ \bigcup_{B\in M(U,V,W)}\speps^p B
\qquad\textrm{and}\qquad \S_\eps^p\ =\ \S_\eps^q,
\]
where $p^{-1}+q^{-1}=1$.
\end{corollary}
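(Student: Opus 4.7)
The plan is to reduce both equalities to Corollary~\ref{cor:PE} and Proposition~\ref{prop:PEpq} via the elementary observation that a diagonal shift translates the triple of diagonal sets: $A\in\PE(U,V,W)$ iff $A-\lambda I\in\PE(U,V-\lambda,W)$, and the analogous equivalence holds for $M(U,V,W)$. Thus for each fixed $\lambda\in\C$, all questions about $\|(A-\lambda I)^{-1}\|_p$ and $\|(B-\lambda I)^{-1}\|_p$ reduce to the $\lambda=0$ statements for the shifted triple $(U,V-\lambda,W)$, and no new argument beyond this translation trick is needed.

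For the first equality I would split on compatibility of $(U,V-\lambda,W)$. In the compatible case, Corollary~\ref{cor:PE} applied to the shifted triple yields
\[
\|(A-\lambda I)^{-1}\|_p\ =\ \max_{B\in M(U,V,W)}\|(B-\lambda I)^{-1}\|_p,
\]
so $\lambda\in\speps^pA$ iff this common maximum exceeds $\eps^{-1}$ iff $\lambda\in\speps^pB$ for some $B\in M(U,V,W)$, i.e.\ iff $\lambda\in\S_\eps^p$. In the non-compatible case, Proposition~\ref{prop:i-vi} forces $A-\lambda I$ to be non-invertible; since $A$ itself lies in $M(U,V,W)$ it witnesses this non-invertibility on the right-hand side as well, so by the convention $\|(\cdot)^{-1}\|_p:=\infty$ we have $\lambda\in\speps^pA\cap\S_\eps^p$ automatically.

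For the $p$--$q$ equality, I would combine the first part with Proposition~\ref{prop:PEpq} applied to $(U,V-\lambda,W)$: whenever this shifted triple is compatible, $\cN_p(U,V-\lambda,W)=\cN_q(U,V-\lambda,W)$, so $\|(A-\lambda I)^{-1}\|_p=\|(A-\lambda I)^{-1}\|_q$ via Corollary~\ref{cor:PE}, and hence $\lambda\in\speps^pA$ iff $\lambda\in\speps^qA$ for such $\lambda$. Non-compatible $\lambda$ lie in $\S(U,V,W)$ and hence in $\speps^rA$ for every $r\in[1,\infty]$ and every $\eps>0$. Since by the first part $\S_\eps^p=\speps^pA$ and $\S_\eps^q=\speps^qA$, we conclude $\S_\eps^p=\S_\eps^q$. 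The only subtle point throughout is the bookkeeping at non-compatible $\lambda$ (i.e.\ at points of the essential spectrum), where the convention $\|(\cdot)^{-1}\|_p:=\infty$ must be invoked on both sides; everything else is a direct re-expression of the $\lambda=0$ norms-of-inverses results.
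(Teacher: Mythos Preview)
Your argument is correct and is exactly the unpacking that the paper intends: the text introduces this corollary as a direct rephrasing of Corollary~\ref{cor:PE} and Proposition~\ref{prop:PEpq} in pseudospectral language, and your translation $(U,V,W)\mapsto(U,V-\lambda,W)$ together with the compatibility case split is precisely how that rephrasing is carried out.
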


Summarizing Corollary \ref{cor:specB+}, Theorem \ref{thm:gap} and the results in Section \ref{sec:M+vsM}, and recalling the notations $\Eins$ and $\Eall$ from \eqref{eqs:E}, we obtain:
\begin{proposition} {\bf -- semi- vs. bi-infinite matrices. }\label{prop:speps.M+PE+}

{\bf a)} For every $A_+\in\PE_+(U,V,W)$, $\eps>0$ and all $p\in[1,\infty]$, it holds that
\begin{equation} \label{eq:unionPS+}
\S_\eps^p\ \cup \Sigma_+ \subset\ \speps^p A_+\ \subset\ \Sigma_{+,\eps}^p := \bigcup_{C_{+}\in M_{+}(U,V,W)}\speps^{p}C_{+}.
\end{equation}

{\bf b)} For all $A_+,B_+\in M_+(U,V,W)$, $\eps>0$ and $p,q\in[1,\infty]$ with $p^{-1}+q^{-1}=1$, it holds that
\begin{equation}\label{eq:spepsM+}
\speps^p A_+\ \cap\ \speps^q B_+\ \subset\ \Sigma_{+,\eps}^p\ \cap\ \Sigma_{+,\eps}^q\ =\ \S_\eps^p\ \cup\ G,
\end{equation}
 for each $G\in \{E_{\pm 1}, \Eins,\Eall, \S_{+}\}$.  Equality holds in \eqref{eq:spepsM+} if $A_+,B_+\in\PE_+(U,V,W)$.  If $w_*\le u^*$ and $u_*\le w^*$, then $E_{\pm 1}=\varnothing$, so that \eqref{eq:spepsM+} holds with $G=\varnothing$.

{\bf c) } If $\eps>0$ and $p\in[1,\infty]$ is favourable, in particular if $p=2$, then, for every $A_{+}\in\PE_{+}(U,V,W)$,
\begin{equation}\label{eq:favoc}
\speps^{p}A_{+}\ =\ \Sigma_{+,\eps}^p\ =\ \S_\eps^{p}\ \cup\ G,
\end{equation}
 for each $G\in \{E_{\pm 1}, \Eins,\Eall, \S_{+}\}$. In particular \eqref{eq:favoc} holds with $G=\varnothing$ if $w_*\le u^*$ and $u_*\le w^*$.

{\bf d)} If $p,q\in[1,\infty]$ with $p^{-1}+q^{-1}=1$ are both favourable, in particular if $0\in U\cup W$ or $U=W$, then \eqref{eq:favoc} holds for both $p$ and $q$, and $\speps^{p}A_{+}=\speps^{q}A_{+}$.
\end{proposition}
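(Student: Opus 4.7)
The plan is to prove (a)--(d) in order: part (a) supplies basic containments, (b) is the technical core, and (c), (d) fall out under favourability.

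For (a), $\Sigma_+\subset \speps^p A_+$ is immediate from the spectrum/pseudospectrum convention. For $\S_\eps^p\subset \speps^p A_+$, fix $\lambda$ with $\|(B-\lambda I)^{-1}\|_p > \eps^{-1}$ for some $B\in M(U,V,W)$: if $A_+-\lambda I$ is non-invertible, done; otherwise Lemma \ref{lem:limop} d) gives $B\in \opsp(A_+)$, so $B-\lambda I\in \opsp(A_+-\lambda I)$, and with $(A_+-\lambda I)^{-1}$ serving as a regularizer, parts b) then a) of Lemma \ref{lem:limop} yield $\|(B-\lambda I)^{-1}\|_p \le \|(A_+-\lambda I)^{-1}\|_p$. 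The inclusion $\speps^p A_+\subset \Sigma_{+,\eps}^p$ is trivial since $A_+\in M_+(U,V,W)$.

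For (b), I first prove $\Sigma_{+,\eps}^p\cap\Sigma_{+,\eps}^q = \S_\eps^p\cup E_{\pm 1}$, and then verify that the same identity holds with $E_{\pm 1}$ replaced by any of $\Eins,\Eall,\S_+$: each enlargement adds only points already in $\S\subset \S_\eps^p$, since $\Eins\setminus E_{\pm 1}\subset E$, $\Eall\setminus E_{\pm 1} = E$, and $\S_+\setminus E_{\pm 1}\subset \S$, all following from the partitions \eqref{eq:splitS}, \eqref{eq:splitE}, the identity \eqref{eq:S+-S}, and $E\subset \S$ from \eqref{eq:Sk_in_Ek}. The ``$\supset$'' inclusion uses part (a) applied to some $A_+\in \PE_+(U,V,W)$, together with $\S_\eps^p = \S_\eps^q$ (Corollary \ref{cor:speps.PE}) and $E_{\pm 1}\subset \S_+\subset \Sigma_{+,\eps}^p\cap \Sigma_{+,\eps}^q$. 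For ``$\subset$'', given $\lambda$ with witnesses $C_+,D_+\in M_+(U,V,W)$, I split by the status of $(U,V-\lambda,W)$: if incompatible then $\lambda\in\S$; if compatible with $\kappa\neq 0$ then every element of $M_+(U,V-\lambda,W)$ is non-invertible Fredholm, so $\lambda\in \S_+\setminus\S = \S_{\pm 1}\subset E_{\pm 1}$; if compatible with $\kappa=0$ then Corollary \ref{corC} gives invertibility of both $C_+-\lambda I$ and $D_+-\lambda I$, so Proposition \ref{prop:M+} a) forces $\cN_p(U,V-\lambda,W) > \eps^{-1}$, and Corollary \ref{cor:PE} then delivers $B'\in M(U,V,W)$ with $\|(B'-\lambda I)^{-1}\|_p > \eps^{-1}$, witnessing $\lambda\in \S_\eps^p$. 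Equality when $A_+,B_+\in\PE_+(U,V,W)$ combines the general inclusion with part (a) and $\S_+=\spec A_+\subset \speps^p A_+$. The special case $w_*\le u^*$, $u_*\le w^*$ corresponds to case i) of Theorem \ref{thm:gap}, where $E_{\pm 1}=\varnothing$.

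For (c), favourability \eqref{eq:favo} applied to $A_+-\lambda I\in \PE_+(U,V-\lambda,W)$ (whenever invertible) gives $\|(A_+-\lambda I)^{-1}\|_p = \max_{C_+\in M_+(U,V,W)}\|(C_+-\lambda I)^{-1}\|_p$, hence $\speps^p A_+ = \Sigma_{+,\eps}^p$ after absorbing the non-invertible $\lambda$'s through $\spec A_+\subset \speps^p A_+\cap \Sigma_{+,\eps}^p$. The equality $\Sigma_{+,\eps}^p = \S_\eps^p\cup G$ is then proved by the same case split as in the ``$\subset$'' direction of (b), with favourability replacing Proposition \ref{prop:M+} a) in the $\kappa=0$ case. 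Part (d) is immediate: apply (c) separately to $p$ and to $q$, then invoke $\S_\eps^p = \S_\eps^q$.

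The main obstacle is combining the quantitative Coburn-type bound of Proposition \ref{prop:M+} a) with the bookkeeping needed to make the single identity in (b) simultaneously valid for four nested choices of $G$, which requires careful tracking of which points in $\S_+\setminus \S$ are already captured by $\S_\eps^p$. A minor subtlety in (c) is that favourability for the original triple must carry over to each shifted triple $(U,V-\lambda,W)$; this is automatic in the cases $p=2$, $0\in U\cup W$, or $U=W$ of Propositions \ref{prop:scenarios} b) and \ref{prop:favo_cases} b), since those sufficient conditions do not depend on $V$.
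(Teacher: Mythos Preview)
Your proof is correct and follows the same route as the paper: Proposition \ref{prop:PE+} b) for (a), Proposition \ref{prop:M+} a) for the decisive $\kappa=0$ case in (b), and then (c), (d) from (b) plus favourability. The paper's bookkeeping in (b) is marginally slicker---it observes once, via Theorem \ref{thm:gap} and $\S\subset\S_\eps^p$, that $\S_\eps^p\cup G=\S_\eps^p\cup\S_+$ for every listed $G$ and then works only with $\S_+$---but your variant (establish $G=E_{\pm1}$ first, then absorb the larger $G$'s into $\S\subset\S_\eps^p$) is equivalent; and your flag that favourability must hold for each shifted triple $(U,V-\lambda,W)$, automatic for $p=2$ and under the hypotheses of Proposition \ref{prop:favo_cases} b), is a point the paper's terse proof of (c) leaves implicit.
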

Regarding the different possibilities for $G$ in the above proposition, recall from Theorem \ref{thm:gap} that each indicated choice closes the gap between $\S$ and $\S_{+}$ in the sense that $\S\cup G=\S_{+}$, including the choice $G=\varnothing$ if $w_*\le u^*$ and $u_*\le w^*$.
\begin{proof} First, note that $\S\subset\S_\eps^p$ since $\S=\spec A\subset\speps^p A=\S_\eps^p$ for every $A\in\PE(U,V,W)$.

{\bf a) } Let $A_+\in\PE_+(U,V,W)$. If $\lambda\in\S_+=\spec A_+$ then $\lambda\in\speps^{p}A_{+}$. If $\lambda\in\S_{\eps}^{p}\setminus\S\supset \S_{\eps}^{p}\setminus\S_+$ then, for every $A\in\PE(U,V,W)$, $\|(A_{+}-\lambda I_{+})^{-1}\|_{p}\ge\|(A-\lambda I)^{{-1}}\|_{p}>\eps^{-1}$, by Proposition \ref{prop:PE+} b), so that $\lambda\in\speps^{p}A_{+}$.

{\bf b) } We start with the equality in \eqref{eq:spepsM+}. By $\S\subset\S_\eps^p$ and Theorem \ref{thm:gap}, $\S_\eps^p\cup G = \S_\eps^p\cup\S\cup G = \S_\eps^p\cup\S_+$, for each $G\in \{E_{\pm 1}, \Eins,\Eall, \S_{+}\}$.

To show that $\speps^p A_+ \cap \speps^q B_+ \subset \S_\eps^p \cup \S_+$, 
let $A_+,B_+\in M_+(U,V,W)$ and $\lambda\in\speps^p A_+ \cap \speps^q B_+$, so that $\|(A_+-\lambda I_+)^{-1}\|_p>\eps^{-1}$ and $\|(B_+-\lambda I_+)^{-1}\|_q>\eps^{-1}$. If one of these two numbers is infinite, i.e.~$\lambda\in\spec A_+$ or $\lambda\in\spec B_+$, then 
$\lambda\in\S_+$. 
 If both $\|(A_+-\lambda I_+)^{-1}\|_p$ and $\|(B_+-\lambda I_+)^{-1}\|_q$ are finite and greater than $\eps^{-1}$, consider $A\in\PE(U,V,W)$. If $A-\lambda I$ is not invertible then $\lambda$ is in $\S\subset \S_+$. If $A-\lambda I$ is invertible then, by \eqref{eq:M+min}, $\|(A-\lambda I)^{-1}\|\ge\min(\|(A_+-\lambda I_+)^{-1}\|_p,\|(B_+-\lambda I_+)^{-1}\|_q)>\eps^{-1}$, so that $\lambda$ is in $\S_\eps^p$.

Now suppose that $A_+,B_+\in\PE_+(U,V,W)$ 
and that $\lambda\in\S_\eps^p\cup\S_+$. If $\lambda\in\S_+$, then $\lambda\in\spec A_+=\spec B_+$ by Corollary \ref{cor:specB+}, so that $\lambda\in\speps^p A_+ \cap \speps^q B_+$. If $\lambda\in\S_\eps^p$ then, by Proposition \ref{prop:M+} c) and \eqref{eq:||PE||}, $\min(\|(A_+-\lambda I_+)^{-1}\|_p,\|(B_+-\lambda I_+)^{-1}\|_q)=\|(A-\lambda I)^{-1}\|>\eps^{-1}$ for every $A\in\PE(U,V,W)$, so, again, $\lambda\in\speps^p A_+ \cap \speps^q B_+$. In both cases it follows that $\lambda\in \S_{+,\eps}^p\cap \S_{+,\eps}^q$.

{\bf c) } This follows immediately from {\bf b)} and \eqref{eq:favo}.

{\bf d) } This follows from {\bf c)}, \eqref{eq:favopq},  and Proposition \ref{prop:favo_cases} b).
\end{proof}

Here is the pseudospectral formulation of Proposition \ref{prop:F1}:

\begin{corollary}{\bf -- finite, semi- and bi-infinite matrices.}\label{cor:spepsMfin}

{\bf a) } For all $\eps>0$ and $p,q\in[1,\infty]$ with $p^{-1}+q^{-1}=1$, it holds that
\[
\Sfin^p\ :=\ \bigcup_{F\in\Mfin(U,V,W)}\speps^p F\ =\ \S_{+,\eps}^p\ \cup\ \S_{+,\eps}^q.
\]

{\bf b) } If $p$ and $q$ are both favourable, in particular if $p=q=2$,  this simplifies to
\begin{equation} \label{eq:final}
\Sfin^p\  =\
\speps^p A_+\ =\ \S_{+,\eps}^p\ =\ \S_\eps^p\ \cup\ G\ =\ \S_\eps^q\ \cup\ G\ =\ \S_{+,\eps}^q\ =\ \speps^q B_+,
\end{equation}
which holds for all $A_+,B_+\in\PE_+(U,V,W)$,  and for each $G\in \{E_{\pm 1}, \Eins,\Eall, \S_{+}\}$, with $E_{\pm 1}=\varnothing$ if $w_*\le u^*$ and $u_*\le w^*$.
\end{corollary}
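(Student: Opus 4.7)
The key observation is that pseudospectral statements translate into norm-of-inverse statements about a shifted triple: $\lambda \in \speps^p F$ iff $\|(F-\lambda I)^{-1}\|_p > \eps^{-1}$, and $F - \lambda I$ ranges over $\Mfin(U,V-\lambda,W)$ as $F$ ranges over $\Mfin(U,V,W)$; similarly for $M_+$ and $\PE_+$. The plan for part a) is therefore to translate the problem and then apply the norm identity \eqref{eq:F2} of Proposition \ref{prop:F1} to the shifted triple $(U,V-\lambda,W)$, with a short separate argument for those $\lambda$ at which that proposition does not apply, namely $\lambda \in \S_+$.

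For the inclusion $\S_{+,\eps}^p \cup \S_{+,\eps}^q \subset \Sfin^p$, fix $\lambda \in \S_{+,\eps}^p$ and pick $C_+ \in M_+(U,V,W)$ with $\|(C_+ - \lambda I_+)^{-1}\|_p > \eps^{-1}$. Apply the lower bound \eqref{eq:liminf} to the full FSM of $C_+ - \lambda I_+$, whose finite sections lie in $\Mfin(U,V-\lambda,W)$: this produces some finite section $F - \lambda I$ with inverse of norm $> \eps^{-1}$, so $\lambda \in \speps^p F \subset \Sfin^p$. To obtain $\S_{+,\eps}^q \subset \Sfin^p$ for free, note that the reflection $F \mapsto R_n F^\top R_n$ is a bijection of $M_n(U,V,W)$ onto itself that swaps $\speps^p$ and $\speps^q$, hence $\Sfin^p = \Sfin^q$, and the same argument applies with $p$ replaced by $q$.

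The reverse inclusion $\Sfin^p \subset \S_{+,\eps}^p \cup \S_{+,\eps}^q$ is the substantive step. If $\lambda \in \S_+$, then by \eqref{eq:S+} some $C_+ \in M_+(U,V,W)$ has $\lambda \in \spec C_+ \subset \speps^p C_+$, placing $\lambda$ in $\S_{+,\eps}^p$. Otherwise $(U,V-\lambda,W)$ is compatible with $\kappa=0$ by the equivalence of (f) and (i) in Theorem \ref{th:FSM1}, so Proposition \ref{prop:F1} a) applied to the shifted triple gives the crucial identity
\[
\sup_{F\in \Mfin(U,V,W)} \|(F-\lambda I)^{-1}\|_p \ =\ \max\bigl(\cN_{+,p}(U,V-\lambda,W),\ \cN_{+,q}(U,V-\lambda,W)\bigr).
\]
Since the hypothesis $\lambda \in \Sfin^p$ means the left-hand side exceeds $\eps^{-1}$, so must one of the two terms on the right, and unravelling the definitions places $\lambda$ in $\S_{+,\eps}^p \cup \S_{+,\eps}^q$.

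Part b) should then be a formal combination of part a) with earlier results: Proposition \ref{prop:speps.M+PE+} c) identifies $\speps^p A_+$ with $\S_{+,\eps}^p$ and with $\S_\eps^p \cup G$ whenever $p$ is favourable (and analogously for $q$), while Corollary \ref{cor:speps.PE} gives $\S_\eps^p = \S_\eps^q$; these combined collapse all the sets appearing in \eqref{eq:final} to a common value, which part a) identifies with $\Sfin^p$. I do not anticipate a genuine obstacle: the heavy lifting, namely the quantitative glueing identity \eqref{eq:F2}, is already in place from Proposition \ref{prop:F1}, and the corollary essentially just repackages that result in pseudospectral language.
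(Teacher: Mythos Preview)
Your approach is essentially the paper's: translate pseudospectra into resolvent-norm statements for the shifted triple $(U,V-\lambda,W)$ and invoke Proposition~\ref{prop:F1}. The paper's proof is slightly more compact, stating directly that the norm identity $\cNfinp(U,V-\lambda,W)=\max\bigl(\cN_{+,p}(U,V-\lambda,W),\cN_{+,q}(U,V-\lambda,W)\bigr)$ holds for \emph{all} $\lambda\in\C$, with both sides infinite simultaneously (which is exactly the equivalence $(l)\Leftrightarrow(f)$), and reads off both inclusions from that single statement.

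One small gap in your execution: in the forward inclusion $\S_{+,\eps}^p\subset\Sfin^p$ you invoke \eqref{eq:liminf} applied to $C_+-\lambda I_+$, but \eqref{eq:liminf} is derived in the paper under the hypothesis that the FSM is applicable, which by Theorem~\ref{th:FSM1} requires $\lambda\notin\S_+$. For $\lambda\in\S_+$ you need the separate argument you already flagged in your plan but only deployed for the reverse inclusion: since $(f)$ fails for the shifted triple, so does $(l)$, hence $\cNfinp(U,V-\lambda,W)=\infty$ and $\lambda\in\Sfin^p$. With that patch (which you clearly anticipated), the argument is complete. Part~b) is handled exactly as in the paper, via Proposition~\ref{prop:speps.M+PE+}.
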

\begin{proof}
{\bf a)} This follows immediately from Proposition \ref{prop:F1}. In particular,
\begin{align*}
\sup_{F\in\Mfin(U,V,W)}&\|(F-\lambda I)^{-1}\|_p\\
&=\ \max\left(\sup_{A_+\in M_+(U,V,W)}\|(A_+-\lambda I_{+})^{-1}\|_p\ ,\ \sup_{A_+\in M_+(U,V,W)}\|(A_+-\lambda I_{+})^{-1}\|_q\right)
\end{align*}
holds for all $\lambda\in\C$, where both sides are infinite at the same time.

{\bf b)} follows from {\bf a)} and Proposition \ref{prop:speps.M+PE+} d).
\end{proof}
\medskip

Finally we study asymptotics of the pseudospectra of the finite sections of $A_+\in$ $\PE_+(U,V,W)$, generalising results from \cite{CCL2} for the Feinberg-Zee random hopping matrix, and results of \cite{BoeGruSilb97} which apply when $U$, $V$, and $W$ are singletons. For this purpose we briefly recall notions of set convergence. Let $\C^B, \C^C$ denote the sets of bounded and compact, respectively, non-empty subsets of $\C$.  For $a\in \C$ and non-empty $B\subset \C$, let $\dist(a,B) :=\inf_{b\in B}|a-b|$. For $A,B\in \C^B$ let
\begin{equation} \label{eq:Haus}
d_H(A,B)\ :=\ \max\left(\sup_{a\in A}\dist(a,B),\sup_{b\in B}\dist(b,A)\right).
\end{equation}
It is well known (e.g., \cite{Hausdorff,HaRoSi2}) that $d_H(\cdot,\cdot)$ is a metric on $\C^C$, the so-called {\em Hausdorff metric}. For $A, B\in \C^B$ it is clear that $d_H(A,B)=d_H(\overline{A},\overline{B})$, so that $d_H(A,B)=0$ iff $\overline{A}=\overline{B}$, and $d_H(\cdot,\cdot)$ is a pseudometric on $\C^B$. For a sequence $(S_n)\subset\C^B$ and $S\in \C^B$ we write $S_n\toH S$ if $d_H(S_n,S)\to 0$. This limit is in general not unique: if $S_n\toH S$, then $S_n\toH T$ iff $\overline{S}=\overline{T}$.

A second related standard notion of set convergence is the following \cite[Section 28]{Hausdorff}, \cite[Definition 3.1]{HaRoSi2}: for sets $S_n, S\subset\C$, we write $S_n\to S$ if $\liminf S_n=\limsup S_n=S$, where $\liminf S_n$ is the set of limits of sequences $(z_n)\subset \C$ such that $z_n\in S_n$ for each $n$, while $\limsup S_n$ is the set of partial limits of such sequences. Both $\liminf S_n$ and $\limsup S_n$ are closed sets \cite[Proposition 3.2]{HaRoSi2}, and it is clear (see \cite[Proposition 3.5]{HaRoSi2}) that, for every set sequence $(S_n)$, $\liminf S_n=\liminf \overline{S_n}$ and $\limsup S_n=\limsup \overline{S_n}$. These two notions of convergence are very close. Precisely,  for $(S_n)\subset \C^B$, $\liminf S_n=\limsup S_n$ iff $\liminf \overline{S_n} =\liminf S_n=\limsup S_n=\limsup \overline{S_n}$ and (see \cite[p.~171]{Hausdorff} or \cite[Proposition 3.6]{HaRoSi2}), this holds iff $\overline{S_n}\toH S$ for some $S\in \C^C$, in which case $\overline{S_n}\to S$. Further, $\overline{S_n}\toH S$ iff $S_n\toH S$, and $\overline{S_n}\to S$ iff $S_n\to S$. Thus, for $(S_n)\subset \C^B$ and $S\in \C^B$, $S_n\toH S$ iff $S_n\to \overline{S}$.

We introduce the following stronger notion of set convergence (cf., \cite[p.~21]{Hausdorff}). Given
$S\subset\C$ and a sequence of sets $S_n\subset \C$,  we will write $S_n\nearrow S$ if $S_n\subset S$ for each $n$, and if every $z\in S$ is also in $S_n$ for all sufficiently large $n$. In symbols $S_n\nearrow S$ means that
$S_n \subset S$ for each $n$ and that $S = \cup_{m\in\N}\cap_{n\ge m}S_n$.
The following lemma is immediate from this definition and the observations above:
\begin{lemma} \label{lem:setconv} If $S_n\subset \C$ is a set sequence, $S\subset \C$, and $S_n\nearrow S$, then $S_n\to \overline{S}$. If, additionally, each $S_n$ is non-empty and $S$ is bounded, then also $S_n\toH S$.
\end{lemma}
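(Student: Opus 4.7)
The plan is to unpack the definition of $S_n \nearrow S$ and verify the $\liminf$/$\limsup$ equalities by hand for the first assertion, then invoke the equivalence recalled in the paragraph immediately preceding the lemma for the second. Neither step looks difficult, so the lemma really is a bookkeeping statement relating the stronger ``eventually inside $S$'' notion to the two standard ones already in play.

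First I would establish $\limsup S_n \subset \overline{S}$. This is immediate: $S_n \subset S \subset \overline{S}$ for every $n$, and $\overline{S}$ is closed, so any partial limit of a sequence $(z_n)$ with $z_n \in S_n$ automatically lies in $\overline{S}$. Next I would prove $\overline{S} \subset \liminf S_n$. Given $z \in S$, the assumption $S = \bigcup_m \bigcap_{n \ge m} S_n$ yields an $N$ with $z \in S_n$ for all $n \ge N$; taking the (eventually) constant sequence $z_n := z$ witnesses $z \in \liminf S_n$, so $S \subset \liminf S_n$. Since $\liminf S_n$ is closed (a fact recalled in the excerpt), passing to closures gives $\overline{S} \subset \liminf S_n$. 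Chaining with the trivial inclusion $\liminf S_n \subset \limsup S_n$ and the first step yields $\liminf S_n = \limsup S_n = \overline{S}$, which is exactly $S_n \to \overline{S}$.

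For the second assertion, the additional hypotheses place us in the regime already covered by the recalled equivalence: each $S_n$ is non-empty by assumption and bounded because $S_n \subset S$ and $S$ is bounded, so $(S_n) \subset \C^B$; and $S \in \C^B$ because $S$ is bounded by assumption and non-empty (any non-empty $S_n$ is contained in it). The statement recalled just before the lemma, that for such sequences $S_n \toH S$ is equivalent to $S_n \to \overline{S}$, then converts the convergence established in the first part into the Hausdorff statement.

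The only place a subtlety could arise is the first-part argument when some early $S_n$ happens to be empty, since then one cannot literally pick $z_n \in S_n$ for every index. This is purely cosmetic: one either shifts/truncates the sequence, or notes that the definition of $\liminf$ is insensitive to the values at finitely many indices once $S_n$ is eventually non-empty (which it is, for any $z \in S$). I do not see a genuine obstacle.
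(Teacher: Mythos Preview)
Your proof is correct and is exactly the unpacking the paper intends: the paper gives no separate argument, stating only that the lemma ``is immediate from this definition and the observations above,'' and your two-step verification of $\limsup S_n \subset \overline{S} \subset \liminf S_n$ followed by the recalled equivalence $S_n \toH S \Leftrightarrow S_n \to \overline{S}$ (for $(S_n)\subset\C^B$, $S\in\C^B$) is precisely that. Your remark on the cosmetic empty-$S_n$ issue is also appropriate and does not affect the argument.
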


In this last proposition we use again the notation \eqref{eq:Spec}. We remark that a weaker version of this proposition, namely that, under the same assumptions on $p$ and $q$, $\limsup \mathrm{spec}_\eps^p A_n = \mathrm{Spec}_\eps^p A_+$, can be deduced from general results on the finite section method for band-dominated operators \cite{SeidelSilb13}, results which, as already noted, also lead to \eqref{eq:limsup}.
\begin{proposition} \label{prop:speps_conv}
Let $A_+\in\PE_+(U,V,W)$ and let $(A_n)$ denote its finite sections \eqref{eq:An} with $l_n= 1$ and $r_n\to +\infty$. If $p,q\in[1,\infty]$ with $p^{-1}+q^{-1}=1$ are both favourable, in particular if $p=q=2$ or $0\in U\cup W$ or $U=W$, then, for all $\eps>0$,
$$
\speps^p A_n\ \nearrow\ \speps^p A_+\ =\ \S_{+,\eps}^p,\qquad \mbox{ as }\ n\to\infty,
$$
so that also $\speps^p A_n \to \overline{\speps^p A_+}= \mathrm{Spec}_\eps^p A_+$ and $\speps^p A_n \toH \speps^p A_+$.
\end{proposition}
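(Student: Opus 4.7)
The plan is to reduce the proposition to establishing the single convergence
\begin{equation} \label{eq:plankey}
\|(A_n-\lambda I_n)^{-1}\|_p\ \longrightarrow\ \|(A_+-\lambda I_+)^{-1}\|_p\quad \text{as }n\to\infty,\ \text{ for every } \lambda\in\C,
\end{equation}
with both sides read as $+\infty$ when the corresponding operator is singular. The equality $\speps^p A_+=\S_{+,\eps}^p$ is exactly Proposition \ref{prop:speps.M+PE+} c), and once $\speps^p A_n\nearrow\speps^p A_+$ is known, the two weaker convergences $\speps^p A_n\toH\speps^p A_+$ and $\speps^p A_n\to\overline{\speps^p A_+}=\mathrm{Spec}_\eps^p A_+$ follow immediately from Lemma \ref{lem:setconv} (using that $\speps^p A_+$ is bounded, as $\spec A_+$ is) together with \eqref{eq:Spec}. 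Granted \eqref{eq:plankey}, $\nearrow$ splits neatly: the inclusion $\speps^p A_n\subset \speps^p A_+$ is just Corollary \ref{cor:spepsMfin} b) applied to $A_n\in\Mfin(U,V,W)$, while the eventual-containment half follows because $\lambda\in\speps^p A_+$ forces $\|(A_+-\lambda I_+)^{-1}\|_p>\eps^{-1}$ and hence, via \eqref{eq:plankey}, $\|(A_n-\lambda I_n)^{-1}\|_p>\eps^{-1}$ for all $n$ sufficiently large.

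To prove \eqref{eq:plankey}, I split on whether $\lambda\in\spec A_+$. If $\lambda\notin\spec A_+$, then $A_+-\lambda I_+\in \PE_+(U,V-\lambda,W)$ is invertible and the full FSM applies to it by Theorem \ref{th:FSM1}; the three sufficient conditions for favourability ($p=q=2$, $U=W$, $0\in U\cup W$) involve only $U$ and $W$, so they transfer unchanged to the shifted triple $(U,V-\lambda,W)$. Proposition \ref{prop:cond}, applied to $A_+-\lambda I_+$, then delivers \eqref{eq:plankey} directly.

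If $\lambda\in\spec A_+$, the task is to show $\|(A_n-\lambda I_n)^{-1}\|_p\to\infty$, which I do by constructing, for any prescribed $M>0$, a finitely supported approximate null vector of $A_n-\lambda I_n$ once $n$ is large enough. Corollary \ref{corC} applied to $A_+-\lambda I_+$ yields two sub-cases. In the sub-case $\kappa(U,V-\lambda,W)=+1$, there is a one-dimensional kernel which, by the standard two-term recurrence, is spanned by a single sequence $x$; the $p$-independence of $\alpha(A_+-\lambda I_+)=1$ forces $x$ into $\bigcap_{r\in[1,\infty]}\ell^r(\N)=\ell^1(\N)$, so $x_j\to 0$. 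The truncation $P_N x$ is compactly supported with $\|P_N x\|_p\to\|x\|_p>0$, while $(A_+-\lambda I_+)P_N x$ has only the two boundary entries $-w_N x_{N+1}$ at row $N$ and $u_{N+1}x_N$ at row $N+1$, both tending to $0$; for $n\ge N+1$ these coincide with the entries of $(A_n-\lambda I_n)P_N x$. The sub-case $\kappa=-1$ is handled symmetrically by passing to the pseudoergodic transpose $(A_+-\lambda I_+)^\top\in\PE_+(W,V-\lambda,U)$, whose index equals $+1$, and invoking the identity $\|(A_n-\lambda I_n)^{-1}\|_p=\|((A_n-\lambda I_n)^\top)^{-1}\|_q$. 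If instead $(U,V-\lambda,W)$ is not compatible, then $A_+-\lambda I_+$ is not Fredholm, and Lemma \ref{lem:limop} c) supplies a non-invertible limit operator $B\in M(U,V-\lambda,W)$ along a shift sequence $h_k\to+\infty$; a compactly supported approximate null vector $y$ of $B$ (or of $B^\top$, in case $B$ fails to be surjective rather than injective) can be translated to sit in a window centred at $h_k$, and for $n$ large enough that this window lies strictly inside $\{1,\ldots,n\}$, the tridiagonal action of $A_n-\lambda I_n$ on the translated $y$ agrees with that of $A_+-\lambda I_+$ and is approximately $By$ (shifted), which has arbitrarily small norm.

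The main obstacle is guaranteeing the truncations and translations produce approximate null vectors in the \emph{correct} $\ell^p(\N)$-norm rather than in some auxiliary $\ell^r$-norm, and the issue is most acute when $p=\infty$, where generic $\ell^\infty$ null vectors need not decay. The resolution is twofold: the $p$-independence of Fredholmness, index, and (in the $\kappa=\pm 1$ sub-case) kernel dimension forces the relevant null vector into $\ell^1$, so truncation works in every $\ell^p$; and the dualisation used in the $\kappa=-1$ and non-Fredholm sub-cases remains within the hypotheses of the proposition precisely because the three listed sufficient conditions for favourability are invariant under the swap $p\leftrightarrow q$.
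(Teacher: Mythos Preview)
Your approach is genuinely different from the paper's and largely correct. The paper does not prove pointwise convergence of resolvent norms; instead, for the ``eventual containment'' half of $\nearrow$ it invokes an external result (Theorem~4.4 of \cite{CCL2}) giving, for each $0<\eta<\eps$, the uniform inclusion $\specn_\eta^p A_+\subset\speps^p A_n$ for all sufficiently large $n$, and then takes the union over $\eta<\eps$. Your route via \eqref{eq:plankey} is more self-contained, and for $\lambda\notin\spec A_+$ it neatly reuses Proposition~\ref{prop:cond}; the $\kappa=\pm 1$ sub-cases are also handled correctly, the key point being that $p$-independence of $\alpha=1$ together with the two-term recurrence forces the kernel vector into $\ell^1$.

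There is, however, a gap in your non-Fredholm sub-case. That $B$ (or $B^\top$) is not invertible does not by itself yield compactly supported approximate null vectors whose ratio $\|By\|_p/\|y\|_p$ is small: if you produce such vectors in $\ell^2$ via Weyl's criterion and then truncate, the conversion to the $\ell^p$-norm picks up factors depending on the (uncontrolled) support size, and for $p=\infty$ truncation of a non-decaying $\ell^\infty$ kernel vector need not give small $\|BP_Ny\|_\infty$ at all. The clean fix is to bypass your entire case analysis for $\lambda\in\spec A_+$ and argue by contradiction using the semi-infinite version of Lemma~\ref{lem:FSMappl}: if $\|(A_n-\lambda I_n)^{-1}\|_p\not\to\infty$, then along a subsequence the finite sections $A_{n_k}-\lambda I_{n_k}$ are invertible with uniformly bounded inverses, i.e.\ stable, and ii)$\Rightarrow$i) of that lemma then forces $A_+-\lambda I_+$ to be invertible, contradicting $\lambda\in\spec A_+$.
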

\begin{proof}
By \eqref{eq:final}, $\speps^p A_n\subset\speps^p A_+=\S_{+,\eps}^p$ for all $n\in\N$. Further, for every $0<\eta< \eps$, it follows from \cite[Theorem 4.4]{CCL2} that $\spec_\eta^p A_+\subset \speps^p A_n$ for all sufficiently large $n$,  i.e.~that, for some $m\in\N$, $\spec_\eta^p A_+\subset\cap_{n\ge m} \speps^p A_n$. Since $\cup_{0<\eta<\eps}\spec_\eta^p A_+ = \spec_\eps^p A_+$, it follows that $\spec_\eps^p A_+=\cup_{m\in\N}\cap_{n\ge m} \speps^p A_n$, and hence that $\speps^p A_n\nearrow  \spec_\eps^p A_+$ as $n\to\infty$. The remaining results follow from Lemma \ref{lem:setconv}.
\end{proof}

\begin{remark}{\bf \; -- the Hilbert space case $p=2$ and the numerical range.}\label{rem:HSC}
We emphasise that our results are somewhat simpler in the important Hilbert space case $p=2$. In particular, for $A_+\in \PE_+(U,V,W)$ and $\eps>0$, where $(A_n)$ denotes the finite sections \eqref{eq:An} of $A_+$ with $l_n= 1$ and $r_n\to +\infty$, we have from \eqref{eq:final} and Proposition \ref{prop:speps_conv} that
$$
\speps^2 A_n\ \nearrow\ \speps^2 A_+\ =\ \Sfin^2\  =\
 \S_{+,\eps}^2\ =\ \S_\eps^2\ \cup\ G,
$$
for each $G\in \{E_{\pm 1}, \Eins,\Eall, \S_{+}\}$, with $E_{\pm 1}=\varnothing$ if $w_*\le u^*$ and $u_*\le w^*$.   Further, in this Hilbert space case, as discussed above \eqref{eq:hagger}, Hagger \cite{Hagger:NumRange} has shown that the closure of the numerical range of every operator in $\PE(U,V,W)\cup \PE_+(U,V,W)$ is given explicitly by $\conv(E)=\conv(E_\cup)$. Since, for a bounded operator on a Hilbert space, the $\eps$-neighbourhood of the numerical range contains the $\eps$-pseudospectrum (e.g.~\cite[Theorem 17.5]{TrefContEmb}), and recalling \eqref{eq:specenc}, we can extend the lower and upper bounds \eqref{eq:hagger} for the spectrum to give bounds on the 2-norm pseudospectra, that, for $\eps>0$,
$$
E_\cup + \eps \D\ \subset\ \S_{+,\eps}^2\ \subset\ \conv(E_\cup) + \eps \D\ \quad \mbox{ and } \quad E + \eps \D\ \subset\ \S_{\eps}^2\ \subset\ \conv(E) + \eps \D.
$$
Example \ref{ex:possi}, with $U=\{1\}$, $V=\{0\}$, $W=\{0,2\}$ and $E_\cup = \conv E_\cup$, is a case where these bounds on $\S_{+,\eps}^2$ are sharp: in that case $\S_{\eps}^2 =\S_{+,\eps}^2= \conv(E_\cup) + \eps \D = \conv E(1,2) + \eps \D$, with $E(1,2)$ the ellipse \eqref{eq:elli12}.
\end{remark}


\bigskip
{\bf Acknowledgements.}
The authors are grateful for funding as the work was developed, and opportunities for interaction and feedback, through the EPSRC MOPNET Network EP/G01387X/1. The second author would also like to thank Raffael Hagger and Christian Seifert (TU Hamburg) for helpful discussions.


\noindent {\bf Authors' addresses:}\\
\\
Simon~N.~Chandler-Wilde \hfill {\tt S.N.Chandler-Wilde@reading.ac.uk} \\
Department of Mathematics and Statistics\\
University of Reading\\
Reading, RG6 6AX\\
UK\\
\\
Marko Lindner\hfill {\tt lindner@tuhh.de}\\
Institut Mathematik\\
TU Hamburg (TUHH)\\
D-21073 Hamburg\\
GERMANY

\end{document}